\newtheorem{theorem}{Theorem}[section]
\newtheorem{lemma}[theorem]{Lemma}
\def\proofbox{\begin{picture}(6.5,6.5)
\put(0,0){\framebox(6.5,6.5){}}\end{picture}}
\newenvironment{proof}{\noindent{\it Proof.\quad}}{\hfill\proofbox}
\begin{document}

\title{Edge Preserving Maps of the Curve Graphs 
in Low Genus\\}
\author{Elmas Irmak}

\maketitle

\renewcommand{\sectionmark}[1]{\markright{\thesection. #1}}

\thispagestyle{empty}
\maketitle
\begin{abstract} Let $R$ be a compact, connected, orientable surface of genus $g$ with $n$ boundary components. Let $\mathcal{C}(R)$ be the curve graph of $R$. 
We prove that if $g=0, n \geq 5$ or $g=1, n \geq 3$, and $\lambda : \mathcal{C}(R) \rightarrow\mathcal{C}(R)$ is an edge preserving map, then $\lambda$ is induced by a homeomorphism of $R$, and this homeomorphism is unique up to isotopy. \end{abstract}
  
{\small Key words: Mapping class groups, edge preserving maps, orientable surfaces

MSC: 57N05, 20F38}
 
\section{Introduction} 
Let $R$ be a compact, connected, orientable surface of genus $g$ with $n$ boundary components. The mapping class group, $Mod_R$, of $R$ is defined to be the group of 
isotopy classes of orientation preserving self-homeomorphisms of $R$. The extended mapping class group, $Mod^*_R$, of $R$ is defined to be the group of 
isotopy classes of all self-homeomorphisms of $R$. Abstract simplicial complexes on surfaces have been studied to get information about the algebraic structure of the extended mapping class groups of the surfaces. One of these complexes is the \textit{complex of curves}, on $R$. The vertex set of the complex of curves
is the set of isotopy classes of nontrivial simple closed curves on $R$, where nontrivial means 
the curve does not bound a disk and it is not isotopic to a boundary component of $R$. A set of vertices forms a simplex in the complex of curves if
they can be represented by pairwise disjoint simple closed curves on the surface. Let $\mathcal{C}(R)$ be the curve graph, the 1st skeleton of the complex of curves on $R$. 
The main result is the following:

\begin{theorem} \label{A1} Let $R$ be a compact, connected, orientable surface with $g=0, n \geq 5$ or $g=1, n \geq 3$. If $\lambda :\mathcal{C}(R) \rightarrow \mathcal{C}(R)$ is an edge preserving map, 
then there exists a homeomorphism $h : R \rightarrow R$ such that $H(\alpha) = \lambda(\alpha)$
for every vertex $\alpha$ in $\mathcal{C}(R)$ where $H=[h]$ (i.e. $\lambda$ is induced by $h$) and this homeomorphism is unique up to isotopy.\end{theorem}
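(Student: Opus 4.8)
\medskip
\noindent\emph{Proof proposal.} The plan is to promote the one-sided hypothesis that $\lambda$ preserves edges to the two-sided statement that $\lambda$ is a superinjective simplicial self-map of $\mathcal{C}(R)$, i.e. $i(\alpha,\beta)=0$ precisely when $i(\lambda(\alpha),\lambda(\beta))=0$, and then to realize such a map by a homeomorphism using the rigidity results already available for surfaces of this small complexity. To begin I would set up a topological dictionary for the two families in the statement: for $g=0$ every essential class is separating and is recorded by the partition of the $n$ boundary components it induces, while for $g=1$ a class is either non-separating or separating of one of a short list of types. I would then rephrase each topological notion needed later --- ``$\alpha$ is non-separating'', ``$\alpha$ has a prescribed separation type'', ``two disjoint classes cobound a pair of pants, a four-holed sphere, or a one-holed torus'', ``a set of classes is a pants decomposition'', ``two pants decompositions differ by an elementary move'' --- purely in terms of the adjacency relation of $\mathcal{C}(R)$, using only existential statements about the presence of edges among finitely many auxiliary curves and never about the absence of an edge, so that each such property transfers verbatim across $\lambda$. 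Two immediate facts are worth recording: since distinct classes of a simplex are joined by an edge whose image must be a genuine edge with two distinct endpoints, $\lambda$ is automatically a simplicial map that is injective on every simplex; and because a clique of cardinality $3g-3+n$ in $\mathcal{C}(R)$ is necessarily a maximal simplex, hence a pants decomposition, $\lambda$ carries pants decompositions to pants decompositions.

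With the dictionary in hand I would show, one type at a time, that $\lambda$ sends a class of a given topological type to a class of the same type and an elementary move to an elementary move. Passing to the link of a vertex $\alpha$ --- a copy of the curve graph of the surface obtained by cutting $R$ along $\alpha$ --- organizes this as an argument by complexity; here the low-genus hypotheses $n\geq 5$ (respectively $n\geq 3$) enter, ensuring that the auxiliary curves demanded by the combinatorial characterizations actually exist on $R$ and on the relevant cut surfaces.

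Next I would establish the reverse implication $i(\alpha,\beta)\neq 0 \Rightarrow i(\lambda(\alpha),\lambda(\beta))\neq 0$, together with the global injectivity of $\lambda$ (if $\lambda(\alpha)=\lambda(\beta)$ with $\alpha\neq\beta$ then $\alpha$ and $\beta$ must intersect, since an edge cannot have a one-point image, and this possibility is excluded using the characterizations of the previous step). Concretely, given $\alpha,\beta$ with $i(\alpha,\beta)\neq 0$ I would complete $\alpha$ to a pants decomposition $P$ --- automatically with $\beta\notin P$ --- and compare the topological types of the complementary pieces of $P$ with those of $\lambda(P)$: this comparison, via the adjacency-only descriptions of those types, forces $\lambda(\beta)$ to cross some curve of $\lambda(P)$ in a position incompatible with its being disjoint from $\lambda(\alpha)$. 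Hence $\lambda$ is a superinjective simplicial self-map of $\mathcal{C}(R)$. This middle stage is the principal difficulty: the hypothesis is genuinely one-directional, so one must isolate purely positive, adjacency-only characterizations of the topological types that are nonetheless sharp enough to pin them down, and then, in a setting where the supply of curve types is thin, leverage them to recover the two-sided statement.

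Finally, a superinjective simplicial self-map of $\mathcal{C}(R)$ is induced by a homeomorphism $h: R\to R$ --- this is the topological rigidity of superinjective maps of curve graphs for punctured spheres and punctured tori, available from the Ivanov--Korkmaz--Luo circle of results and their superinjective strengthenings --- which provides the required $h$. For uniqueness, if $h_1$ and $h_2$ both induce $\lambda$, then $h_2^{-1}h_1$ fixes the isotopy class of every curve on $R$, and by the Alexander method a self-homeomorphism of $R$ with this property is isotopic to the identity; hence $h$ is unique up to isotopy.
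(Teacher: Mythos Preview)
Your strategy is genuinely different from the paper's, and the central step carries an unsubstantiated claim that amounts to a real gap.

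\medskip
\textbf{Where the paper and your outline diverge.} The paper never shows that $\lambda$ is superinjective in general. Instead it proceeds by: (a) proving a small number of \emph{specific} preservation lemmas (e.g.\ for $g=1$, $i(\alpha,\beta)=1\Rightarrow i(\lambda(\alpha),\lambda(\beta))\neq 0$, then $=1$; adjacency and non-adjacency with respect to one fixed pants decomposition $P$); (b) using those to build a homeomorphism $h$ agreeing with $\lambda$ on a carefully chosen finite set $\mathcal{C}$ of curves; and (c) extending the equality $h=\lambda$ to all of $\mathcal{C}(R)$ by an inductive exhaustion driven by a finite generating set $G\subset Mod_R$ and sets $L_f$ with trivial stabilizer (the Irmak--Paris scheme). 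No ``adjacency-only dictionary'' for arbitrary topological types is developed, and no general $i(\alpha,\beta)\neq0\Rightarrow i(\lambda(\alpha),\lambda(\beta))\neq0$ is proved.

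\medskip
\textbf{The gap in your proposal.} You assert that every needed notion --- separating type, cobounding a pair of pants or a one-holed torus, etc. --- admits a characterization ``using only existential statements about the presence of edges \ldots\ and never about the absence of an edge'', so that it transfers through an edge-preserving map. You do not exhibit any such characterization, and it is far from clear one exists: the usual way to detect the topological type of $\alpha$ is through the structure of its link or of the pants decompositions containing it, and both descriptions are riddled with \emph{non}-adjacency information. The paper's own lemmas show how delicate this is even in the narrow cases it actually needs: to prove that a single adjacency in one fixed pants decomposition $P$ is preserved, it must introduce auxiliary curves, switch among several modified pants decompositions, and repeatedly rule out degenerate coincidences $\lambda([p])=\lambda([c])$ by hand. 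Your step from ``types are preserved'' to full superinjectivity is similarly schematic: completing $\alpha$ to a pants decomposition $P$ and ``comparing types'' does not by itself force $\lambda(\beta)$ to intersect $\lambda(\alpha)$ rather than some other curve of $\lambda(P)$, unless $\beta$ was already disjoint from $P\setminus\{\alpha\}$ --- and there is in general no such $P$ for an arbitrary intersecting pair. In short, the content you defer to the ``principal difficulty'' is precisely the new work the theorem requires; once it is done, invoking the superinjective rigidity of Behrstock--Margalit and Bell--Margalit would indeed finish, but as written your outline does not get there.
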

 
Ivanov gave a classification of isomorphisms between any two finite index subgroups of the extended mapping class group of $R$ and proved that every automorphism of the complex of curves is induced by a homeomorphism of $R$ if the genus is at least two in \cite{Iv1}. These   
results were extended for surfaces of genus zero 
and one by Korkmaz in \cite{K1} and indepedently by Luo in \cite{L}. The author proved that the superinjective simplicial maps of the complexes of curves on a compact, connected, orientable surface are induced by homeomorphisms if the genus is at least two, and using this result she gave a classification of injective homomorphisms from finite index subgroups to the extended 
mapping class group in \cite{Ir1}, \cite{Ir2}, \cite{Ir3}. These results 
were extended to lower genus cases by Behrstock-Margalit and Bell-Margalit in \cite{BM} and in \cite{BeM}. We remind that  
superinjective simplicial maps are simplicial maps that preserve geometric intersection zero and nonzero properties.  
After these results, Shackleton proved that locally injective simplicial maps of the curve complex are induced by homeomorphisms in \cite{Sh}.  

Aramayona-Leininger proved that there is an exhaustion of the curve complex by a sequence of finite rigid sets in \cite{AL2}. Irmak-Paris proved that superinjective simplicial maps of the two-sided curve complex are induced by homeomorphisms on compact, connected, nonorientable surfaces when the genus is at least 5 in \cite{IrP1}. They also gave a classification of injective homomorphisms from finite index subgroups to the whole mapping class group on these surfaces in \cite{IrP2}. In this paper we use some techniques given by Irmak-Paris in \cite{IrP1} and some techniques given by Aramayona-Leininger in \cite{AL2}. 

In \cite{H2} Hern\'andez proved that if $S_1$ and $S_2$ are orientable surfaces of finite topological type such that $S_1$ has genus at least 3 and the complexity of $S_1$ is an upper bound of the complexity of $S_2$, and $\theta : \mathcal{C}(S_1) \rightarrow \mathcal{C}(S_2)$ is an edge-preserving map, then $S_1$ is homeomorphic to $S_2$ and $\theta$ is induced by a homeomorphism. In \cite{Ir4} the author gave a new proof of this result for edge preserving maps of $\mathcal{C}(R)$ when $g \geq 2$, $n \geq 0$ by first proving the result on the nonseparating curve graph. 
Since superinjective simplicial maps are edge preserving this improved the results of the author given in \cite{Ir1}, \cite{Ir2}, \cite{Ir3}. We 
also note that edge preserving maps of the curve graphs were used to get information about the maps of Hatcher-Thurston graphs, see \cite{H3}, \cite{Ir4}. Automorphisms of the Hatcher-Thurston complex were classified by Irmak-Korkmaz in \cite{IrK}.
 
In this paper the author proves the remaining cases on the edge preserving maps of the curve graphs when $g=0, n\geq 5$ or $g=1, n \geq 3$. We note that 
when $g=0$ and $n \in \{1, 2, 3\}$ the curve graph is empty. For the other cases, when $g=0, n =4$ or $g=1, n \in \{0, 1, 2\}$ the statement is not true. When $g=0, n =4$ or $g=1, n \in \{0, 1\}$, the curve graph is represented by the Farey graph (see Figure \ref{fig-0}) (by putting edges between vertices that have geometric intersection two in $g=0, n =4$ case, and by putting edges between vertices that have geometric intersection one in the other two cases). 
It is easy to see that there are edge preserving maps of the Farey graph that are not induced by homeomorphisms of the corresponding surfaces in these cases. When $g=1, n=2$, the curve graph is isomorphic to the curve graph of the surface $M$ with $g=0, n=5$, see Lemma 2.1 given by Luo in \cite{L}. There are automorphisms of the curve graph of $M$ switching vertices that correspond to nonseparating and separating curves on the surface with $g=1, n=2$. So, the statement is not true for $g=1, n=2$.
      
\begin{figure} 
	\begin{center}    
		
		\epsfxsize=2.75in \epsfbox{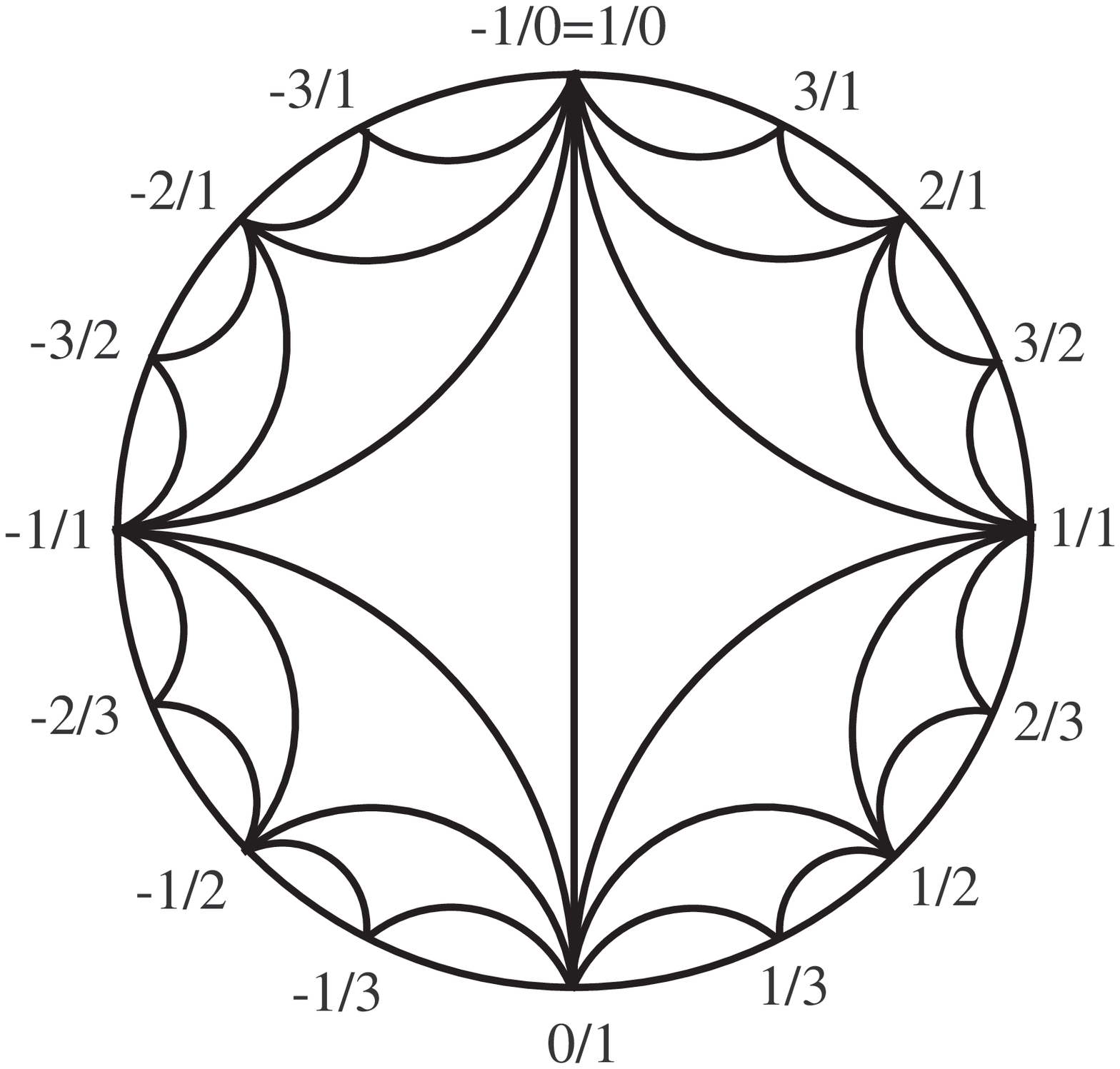} 
		\hspace{0.2cm}  	\epsfxsize=1.9in \epsfbox{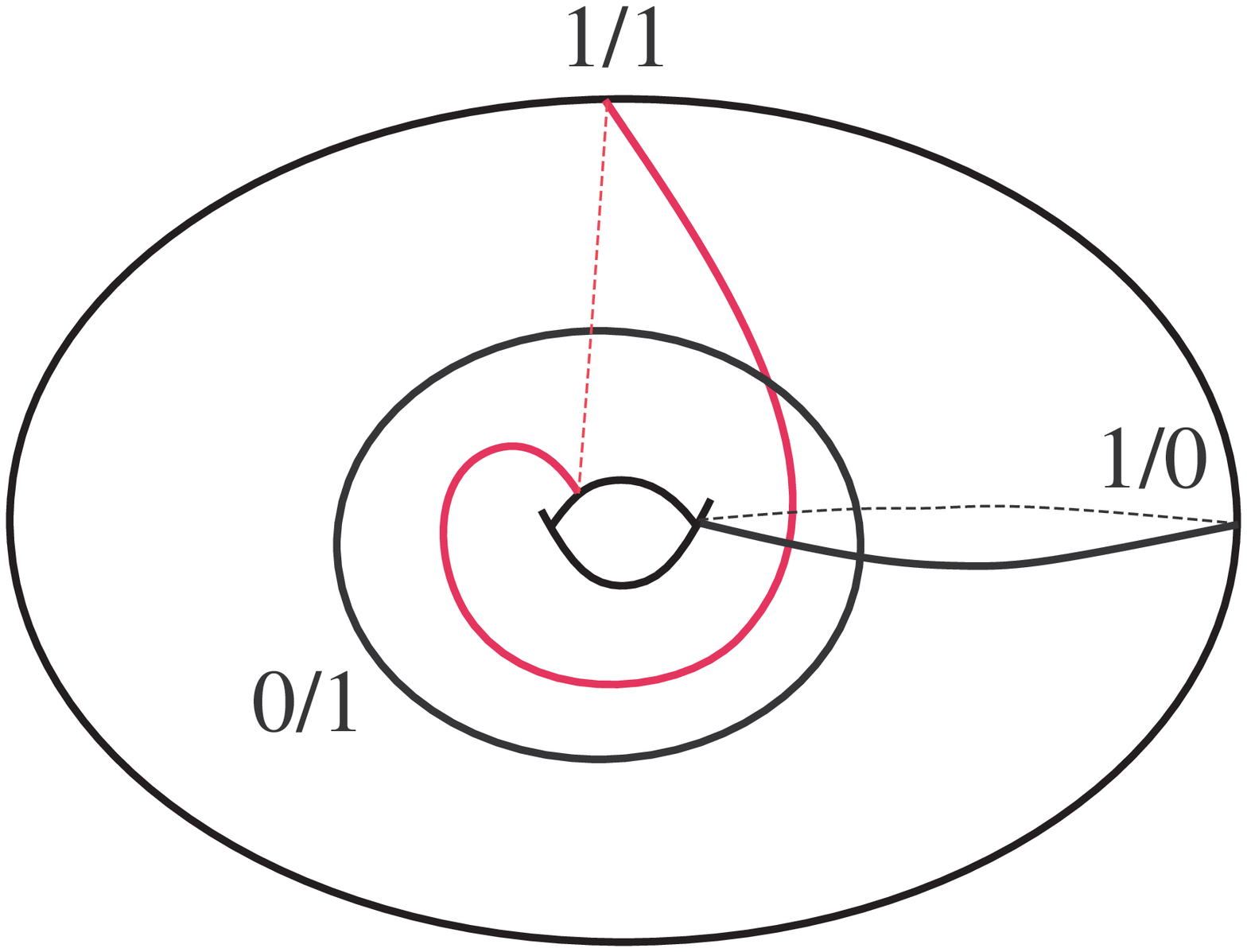} 
		\caption{Farey graph} \label{fig-0}
	\end{center} 
\end{figure}

\section{Edge Preserving Maps of $\mathcal{C}(R)$ when $g=1, n \geq 3$}

In this section we will always assume that $g=1$, $n \geq 3$ and $\lambda :\mathcal{C}(R) \rightarrow \mathcal{C}(R)$ is an edge preserving map. 

We first give some definitions. 
Let $P$ be a set of pairwise disjoint nontrivial simple closed curves on $R$. The set $P$ is
called a pair of pants decomposition of $R$, if $R_P$ (the surface obtained from $R$ by cutting along $P$) is disjoint union of genus zero surfaces with three boundary components, pairs of pants. A pair of pants of a pants
decomposition is the image of one of these connected components under the quotient map $q:R_P \rightarrow R$. Let $a$ and $b$ be two distinct elements in a pair of pants decomposition $P$ on $R$. Then $a$ is called {\it adjacent} to $b$ w.r.t. $P$ iff there exists a pair of pants in $P$ which has $a$ and $b$ on its boundary. 
 
\begin{lemma}
	\label{inj-1} The map $\lambda$ is injective on every set of vertices in $\mathcal{C}(R)$ if each pair in the set has geometric intersection zero.
\end{lemma}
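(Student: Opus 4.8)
The plan is to deduce the statement directly from the definition of an edge-preserving map, without invoking any of the machinery developed later in the paper. Recall that two distinct vertices of $\mathcal{C}(R)$ span an edge precisely when the corresponding isotopy classes have geometric intersection zero, and that, by definition, an edge-preserving map carries every edge of $\mathcal{C}(R)$ to an edge of $\mathcal{C}(R)$. So the whole content of the lemma is that adjacency, which edge-preservation respects, already rules out any identification of vertices within a disjoint family.

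First I would fix a set $A$ of vertices of $\mathcal{C}(R)$ in which every two members have geometric intersection zero, and pick two \emph{distinct} elements $a,b \in A$. Since $a \neq b$ and $a, b$ have geometric intersection zero, $\{a,b\}$ is an edge of $\mathcal{C}(R)$; because $\lambda$ is edge-preserving, $\{\lambda(a),\lambda(b)\}$ is again an edge of $\mathcal{C}(R)$. The two endpoints of an edge of $\mathcal{C}(R)$ are by definition distinct vertices, hence $\lambda(a) \neq \lambda(b)$. As $a$ and $b$ were arbitrary distinct members of $A$, the restriction $\lambda|_{A}$ is injective, which is the assertion.

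I do not expect a genuine obstacle here: the argument is just an unwinding of definitions, and the only point requiring care is to use the convention that \emph{edge-preserving} really means that edges go to edges (and not merely that $\lambda$ is a map of graphs that might collapse an edge onto a vertex). It is worth recording separately because it will be used repeatedly: it implies at once that $\lambda$ sends any multicurve, and in particular any pair of pants decomposition, to a collection of pairwise disjoint and pairwise distinct curves, i.e.\ again to a multicurve, so that $\lambda$ never decreases the number of components of a multicurve. Combined with the fact that a pants decomposition of $R$ is a maximal multicurve (with $3g-3+n$ components), this is presumably the form in which the lemma gets applied later to force $\lambda$ to take pants decompositions to pants decompositions.
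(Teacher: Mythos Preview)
Your proof is correct and is essentially identical to the paper's own argument: pick two distinct vertices in the set, observe they span an edge, apply edge-preservation, and conclude their images are distinct. Your closing remark about pants decompositions is exactly how the paper uses this lemma in the very next statement.
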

 
\begin{proof} Let $\mathcal{A}$ be a set of vertices in $\mathcal{C}(R)$ such that each pair in the set has geometric 
	intersection zero. Let $\alpha$ and $\beta$ be distinct elements in $\mathcal{A}$. Since $i(\alpha, \beta) =0$ there is an edge between 
	$\alpha$ and $\beta$. Since $\lambda$ is edge preserving, there is an edge between $\lambda(\alpha)$ and $\lambda(\beta)$, so $\lambda(\alpha) \neq \lambda(\beta)$.\end{proof}

\begin{lemma} \label{pd-inj-1} Let $P$ be a pants decomposition on $R$. 
	A set of pairwise disjoint representatives of $\lambda([P])$ is a pants decomposition on $R$.\end{lemma}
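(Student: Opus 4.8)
The plan is to reduce the statement to a cardinality count. Since $R$ has genus $1$ with $n\ge 3$ boundary components, every pants decomposition of $R$ consists of exactly $3g-3+n=n$ curves; write $[P]=\{[a_1],\dots,[a_n]\}$ for the corresponding set of vertices of $\mathcal{C}(R)$. First I would apply Lemma \ref{inj-1}: the curves of $P$ are pairwise disjoint, so $\lambda$ is injective on $[P]$, and hence $\lambda([P])$ has exactly $n$ distinct vertices. Next, since any two distinct elements of $[P]$ are joined by an edge (they have geometric intersection zero) and $\lambda$ is edge preserving, any two distinct elements of $\lambda([P])$ are joined by an edge as well.

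The second step is to pass from "pairwise joined by edges" to "simultaneously realizable disjointly". This uses the classical fact that the curve graph is the $1$-skeleton of a flag complex: finitely many isotopy classes that pairwise have geometric intersection number zero can be represented simultaneously by pairwise disjoint simple closed curves. Applying this to $\lambda([P])$ gives a set of $n$ pairwise disjoint, nontrivial simple closed curves on $R$ which are pairwise non-isotopic (since the $\lambda([a_i])$ are distinct vertices). To conclude, I would invoke the standard topological fact that on a compact, connected, orientable surface of genus $g$ with $n$ boundary components the maximal number of pairwise disjoint, pairwise non-isotopic, nontrivial simple closed curves is $3g-3+n$, and that any collection of this size is a pants decomposition: cutting $R$ along such a maximal collection can leave only three-holed spheres, because any complementary piece of higher complexity would carry an essential simple closed curve disjoint from the collection, contradicting maximality. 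Here $3g-3+n=n$, so the $n$ disjoint representatives of $\lambda([P])$ form a pants decomposition of $R$, which is exactly the claim.

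I do not expect a genuine obstacle here; the only points requiring care are the two classical facts cited above (the flag property of the curve complex, and that a maximal disjoint non-isotopic family is a pants decomposition of size $3g-3+n$), and the observation that $\lambda$ automatically lands in nontrivial, non-peripheral classes because those are by definition the vertices of $\mathcal{C}(R)$. This lemma is essentially bookkeeping that prepares the ground for the later analysis of how $\lambda$ acts on adjacency within pants decompositions.
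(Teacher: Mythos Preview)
Your proposal is correct and follows exactly the approach the paper intends: the paper's own proof is the single sentence ``The proof follows from Lemma~\ref{inj-1},'' and your write-up simply spells out the implicit cardinality argument (injectivity of $\lambda$ on $[P]$ gives $n$ distinct pairwise-disjoint classes, and any such maximal collection is a pants decomposition).
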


\begin{proof} The proof follows from Lemma \ref{inj-1}.\end{proof} 
 
 \begin{figure} 
 	\begin{center}    
 		
 		\epsfxsize=2.2in \epsfbox{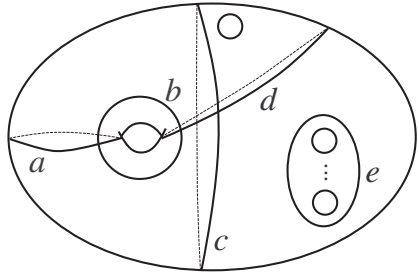} 
 		\hspace{0.2cm}  \epsfxsize=2.2in  
 		\caption{Intersection one} \label{fig-44}
 	\end{center} 
 \end{figure}
	
\begin{lemma}
	\label{22}   Let $\alpha_1, \alpha_2$ be two vertices of 
	$\mathcal{C}(R)$. If $i(\alpha_1, \alpha_2) = 1$, then 
	 $i(\lambda(\alpha_1), \lambda(\alpha_2)) \neq 0$.\end{lemma}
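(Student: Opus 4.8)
The plan is to argue by contradiction: suppose $\alpha_1,\alpha_2$ are vertices with $i(\alpha_1,\alpha_2)=1$ but $i(\lambda(\alpha_1),\lambda(\alpha_2))=0$. The idea is to build a pants decomposition $P$ of $R$ that contains both $\alpha_1$ and $\alpha_2$ as \emph{adjacent} curves, in such a way that the combinatorics of $P$ forces a contradiction with Lemma \ref{pd-inj-1}. Concretely, since $i(\alpha_1,\alpha_2)=1$, a regular neighborhood of $\alpha_1\cup\alpha_2$ is a one-holed torus $T$ embedded in $R$; because $g=1$ and $n\geq 3$, the complement $R\setminus T$ has genus $0$ and at least $n+1\geq 4$ boundary components (one of them being $\partial T$), so $R\setminus T$ is a surface that itself admits a pants decomposition, and $\partial T$ together with the curves of that decomposition and one of $\alpha_1,\alpha_2$ gives a pants decomposition $P_1$ of $R$ containing $\alpha_1$ (with $\partial T$ adjacent to $\alpha_1$), and similarly a pants decomposition $P_2$ containing $\alpha_2$. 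The key move is to exploit that $\lambda$ is edge preserving and hence, by Lemmas \ref{inj-1} and \ref{pd-inj-1}, sends pants decompositions to pants decompositions.

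The heart of the argument is local, inside the one-holed torus $T$. In $T$, the curves $\alpha_1$, $\alpha_2$, and $\partial T$ are arranged so that $i(\alpha_1,\partial T)=i(\alpha_2,\partial T)=0$ while $i(\alpha_1,\alpha_2)=1$. Consider the pants decomposition $P$ of $R$ consisting of $\alpha_1$, $\partial T$, and a completion $Q$ of $\{\alpha_1,\partial T\}$ to a pants decomposition using curves in $R\setminus T$. Then $\alpha_2$ is disjoint from every curve of $Q$ and from $\partial T$, so replacing $\alpha_1$ by $\alpha_2$ in $P$ gives another pants decomposition $P'$; moreover $P$ and $P'$ differ by an ``elementary move'' across the one-holed torus. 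Now apply $\lambda$. By Lemma \ref{pd-inj-1}, both $\lambda([P])$ and $\lambda([P'])$ have pairwise-disjoint representatives forming pants decompositions. Since $\alpha_2$ is disjoint from all of $P\setminus\{\alpha_1\}$, the vertex $\lambda(\alpha_2)$ has an edge to each vertex of $\lambda(P\setminus\{\alpha_1\})$, hence (Lemma \ref{inj-1}) is disjoint from all of them; and by our contradiction hypothesis $\lambda(\alpha_2)$ is also disjoint from $\lambda(\alpha_1)$. Therefore $\lambda(\alpha_1)$ and $\lambda(\alpha_2)$ are \emph{both} disjoint from the pants decomposition $\lambda(P\setminus\{\alpha_1\})\cup\{\text{nothing}\}$... more precisely, $\lambda(\alpha_2)$ is disjoint from all of $\lambda(P)$, so $\lambda(P)\cup\{\lambda(\alpha_2)\}$ would be a set of disjoint curves strictly larger than a pants decomposition, which is impossible unless $\lambda(\alpha_2)$ is isotopic to one of the curves of $\lambda(P)$ — and it cannot be isotopic to $\lambda(\alpha_1)$ (distinct vertices, adjacent in $\mathcal{C}(R)$ via $P'$-considerations) nor to any $\lambda(\gamma)$ with $\gamma\in P\setminus\{\alpha_1\}$, since $\alpha_2$ is disjoint from such $\gamma$ but $\lambda$ restricted to a disjoint set is injective by Lemma \ref{inj-1}, and that only rules out equality not... so here one needs the subtler point that $\alpha_2$ together with $P\setminus\{\alpha_1\}$ is \emph{not} a pants decomposition on its own (it has one too few curves) yet $\alpha_2,\alpha_1$ fill the one-holed torus, forcing $\lambda(\alpha_2)=\lambda(\gamma)$ for some $\gamma$, contradicting injectivity on the disjoint set $\{\alpha_2\}\cup(P\setminus\{\alpha_1\})$.

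I would therefore structure the proof as: (1) fix the embedded one-holed torus $T=N(\alpha_1\cup\alpha_2)$ and record that $R\setminus T$ admits a pants decomposition $Q$ (this uses $g=1,n\geq 3$, equivalently that $R\setminus T$ has negative Euler characteristic and is not an annulus/pants-less piece — in fact it is planar with $\geq 4$ boundary curves); (2) form $P=\{\alpha_1\}\cup\{\partial T\}\cup Q'$ where $Q'$ completes things appropriately, and $P'=(P\setminus\{\alpha_1\})\cup\{\alpha_2\}$, both pants decompositions of $R$; (3) apply Lemma \ref{pd-inj-1} to $P$ to get that $\lambda(P)$ has disjoint representatives forming a pants decomposition $D$; (4) show $\lambda(\alpha_2)$ is disjoint from every curve of $D$ — using $i(\alpha_2,\gamma)=0$ for $\gamma\in P\setminus\{\alpha_1\}$ together with the contradiction hypothesis $i(\lambda(\alpha_1),\lambda(\alpha_2))=0$; (5) conclude $D\cup\{\lambda(\alpha_2)\}$ is a multicurve of size $|D|+1$ unless $\lambda(\alpha_2)$ repeats a curve of $D$, and derive a contradiction with injectivity (Lemma \ref{inj-1}) on the geometric-intersection-zero set $\{\alpha_2\}\cup(P\setminus\{\alpha_1\})$ once one checks $\lambda(\alpha_2)\neq\lambda(\alpha_1)$ via the pants decomposition $P'$ and Lemma \ref{pd-inj-1}.

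The main obstacle I expect is step (5): ruling out the possibility that $\lambda(\alpha_2)$ coincides with one of the images $\lambda(\gamma)$, $\gamma\in P$, requires knowing that $\{\alpha_2\}\cup(P\setminus\{\alpha_1\})$ is \emph{itself} a pants decomposition (so that $\lambda$ is injective on it by Lemma \ref{pd-inj-1} applied to $P'$, which it is) \emph{and} that the only pants decomposition containing $P\setminus\{\alpha_1\}$ as a sub-multicurve other than $P$ is $P'$ — i.e. that in the one-holed torus $T$ there are more than two simple closed curves disjoint from $\partial T$, which is true, so I actually want to use three curves $\alpha_1,\alpha_2,\alpha_3$ in $T$ pairwise intersecting once, get three pants decompositions, and pigeonhole the images. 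Making this pigeonhole airtight — that $\lambda$ cannot collapse the ``triangle'' $\{\alpha_1,\alpha_2,\alpha_3\}$ in $T$ onto a set of curves all disjoint from a common pants decomposition — is the crux, and I would handle it by noting that if $i(\lambda(\alpha_i),\lambda(\alpha_j))=0$ for one pair then the disjointness-preservation forces $\lambda(\alpha_i)$ or $\lambda(\alpha_j)$ into the fixed pants decomposition $D$ of $R\setminus T$'s image, exceeding its curve count, the desired contradiction.
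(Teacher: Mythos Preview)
Your setup is essentially the same as the paper's: build a pants decomposition $P$ containing $\alpha_1$ and the boundary $\partial T$ of the one-holed torus $T=N(\alpha_1\cup\alpha_2)$, observe that $\lambda(\alpha_2)$ is disjoint from every $\lambda(\gamma)$ with $\gamma\in P\setminus\{\alpha_1\}$, and conclude (via maximality of the image pants decomposition and Lemma~\ref{inj-1} on $P'=(P\setminus\{\alpha_1\})\cup\{\alpha_2\}$) that either $i(\lambda(\alpha_1),\lambda(\alpha_2))\neq 0$ or $\lambda(\alpha_1)=\lambda(\alpha_2)$. Up to here you and the paper agree.

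The gap is in ruling out $\lambda(\alpha_1)=\lambda(\alpha_2)$. Your first attempt (``via $P'$ and Lemma~\ref{pd-inj-1}'') does not work: $P'$ does not contain $\alpha_1$, so injectivity on $P'$ says nothing about $\lambda(\alpha_1)$ versus $\lambda(\alpha_2)$. Your second attempt, introducing a third curve $\alpha_3\subset T$ with $i(\alpha_3,\alpha_1)=i(\alpha_3,\alpha_2)=1$, also fails: repeating the argument only yields ``either $i(\lambda(\alpha_1),\lambda(\alpha_3))\neq 0$ or $\lambda(\alpha_1)=\lambda(\alpha_3)$'', and under the hypothesis $\lambda(\alpha_1)=\lambda(\alpha_2)$ this is the \emph{same} dichotomy for the pair $(\alpha_2,\alpha_3)$. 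No pigeonhole contradiction emerges, because a symmetric triangle of once-intersecting curves gives symmetric information.

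The paper's fix is to choose an \emph{asymmetric} auxiliary curve $d$: one with $i(d,\alpha_1)=0$ but $i(d,\alpha_2)=1$ (such a $d$ exists outside $T$ since $n\geq 3$). Then $d$ and $\alpha_1$ are joined by an edge, so $\lambda([d])\neq\lambda(\alpha_1)$ and $i(\lambda([d]),\lambda(\alpha_1))=0$. On the other hand, running your own argument on the pair $(\alpha_2,d)$ gives: either $i(\lambda([d]),\lambda(\alpha_2))\neq 0$ or $\lambda([d])=\lambda(\alpha_2)$. If $\lambda(\alpha_1)=\lambda(\alpha_2)$, both alternatives contradict what we just said about $\lambda([d])$ and $\lambda(\alpha_1)$. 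That asymmetry --- disjoint from one, intersecting the other once --- is the missing ingredient.
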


\begin{proof} Let $a, b$ be minimally intersecting representatives of $\alpha_1, \alpha_2$ respectively. We complete $a, b$ to a curve configuration $\{a, b, c, d, e\}$ as shown in Figure \ref{fig-44}. Then we complete $\{a, c, e\}$ to a pants decomposition $P$ on $R$. Let $P'$ be a set of pairwise disjoint representatives of $\lambda([P])$. The set $P'$ is a pants decomposition on $R$. We see that $i([b], [x]) = 0$ for all $x \in P \setminus \{a\}$ and there is an edge between $[b]$ and $[x]$ for all $x \in P \setminus \{a\}$. Since 
$\lambda$ is edge preserving we have $i(\lambda([b]), \lambda([x])) = 0$ for all $x \in P \setminus \{a\}$ and there is an edge between $\lambda([b])$ and $\lambda([x])$ for all $x \in P \setminus \{a\}$. This implies that either $i(\lambda([a]), \lambda([b])) \neq 0$ or $\lambda([a]) = \lambda([b])$. With a similar argument we can see that either $i(\lambda([d]), \lambda([b])) \neq 0$ or 
$\lambda([d]) = \lambda([b])$. If $\lambda([a]) = \lambda([b])$ then we couldn't have $i(\lambda([d]), \lambda([b])) \neq 0$ and
$\lambda([d]) = \lambda([b])$ since $\lambda$ is edge preserving. Hence, $i(\lambda([a]), \lambda([b])) \neq 0$.\end{proof}
 
\begin{figure} 
	\begin{center}	  	  
		\epsfxsize=2.3in \epsfbox{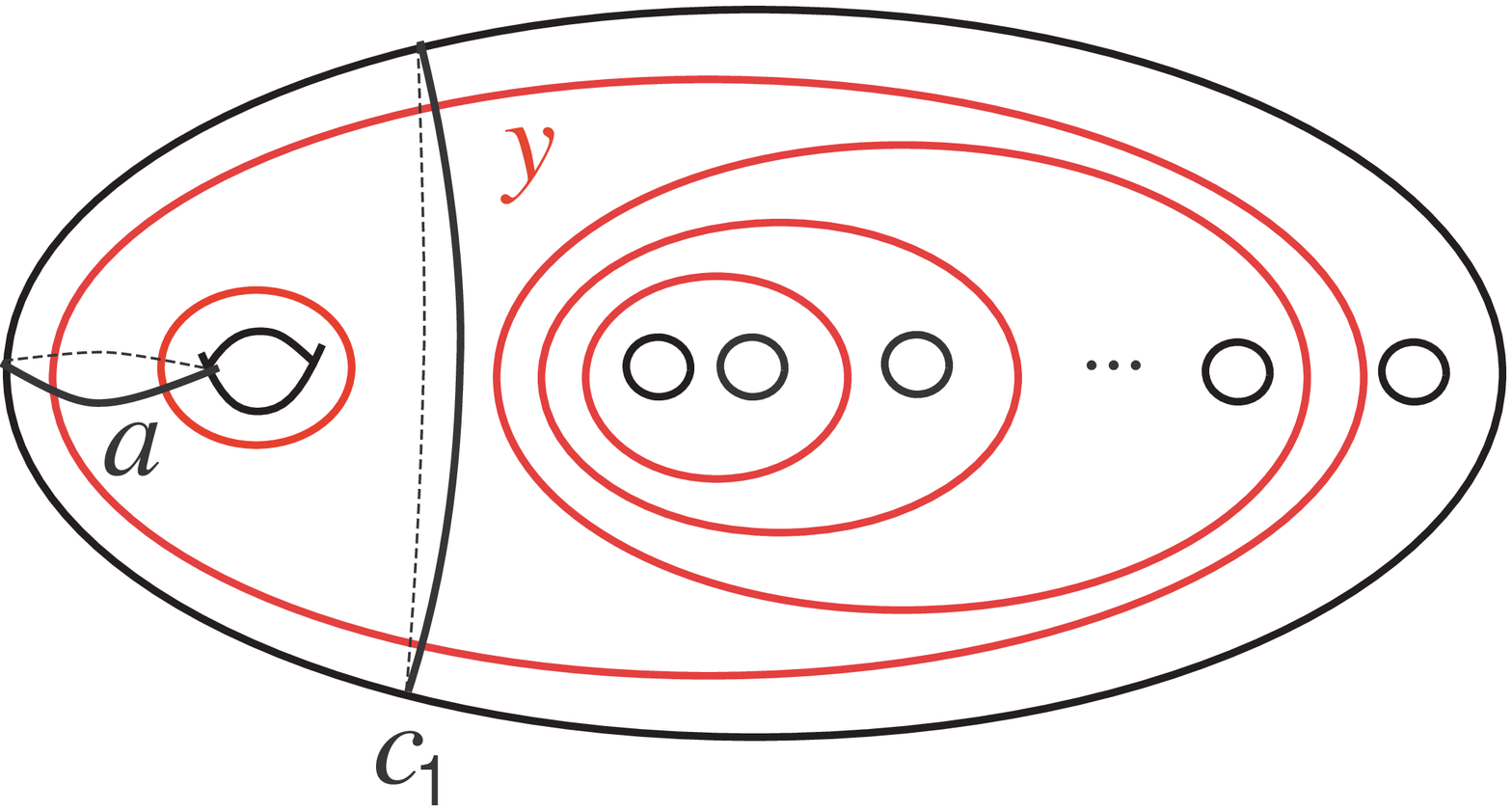}  \hspace{0.2cm} 
		\epsfxsize=2.3in \epsfbox{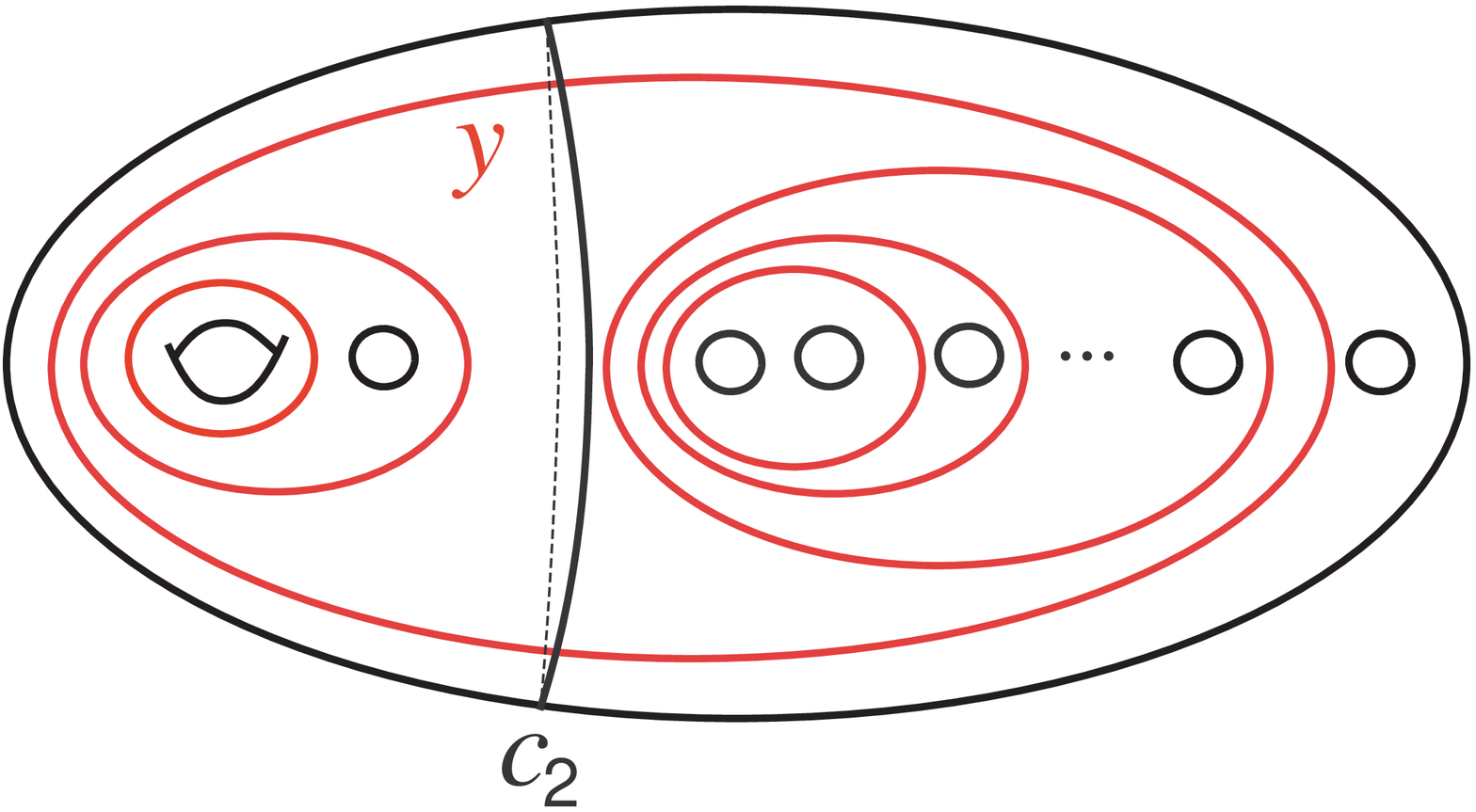}
		
		(i)   \hspace{5.6cm} (ii)  \vspace{0.3cm}
		
		\epsfxsize=2.3in \epsfbox{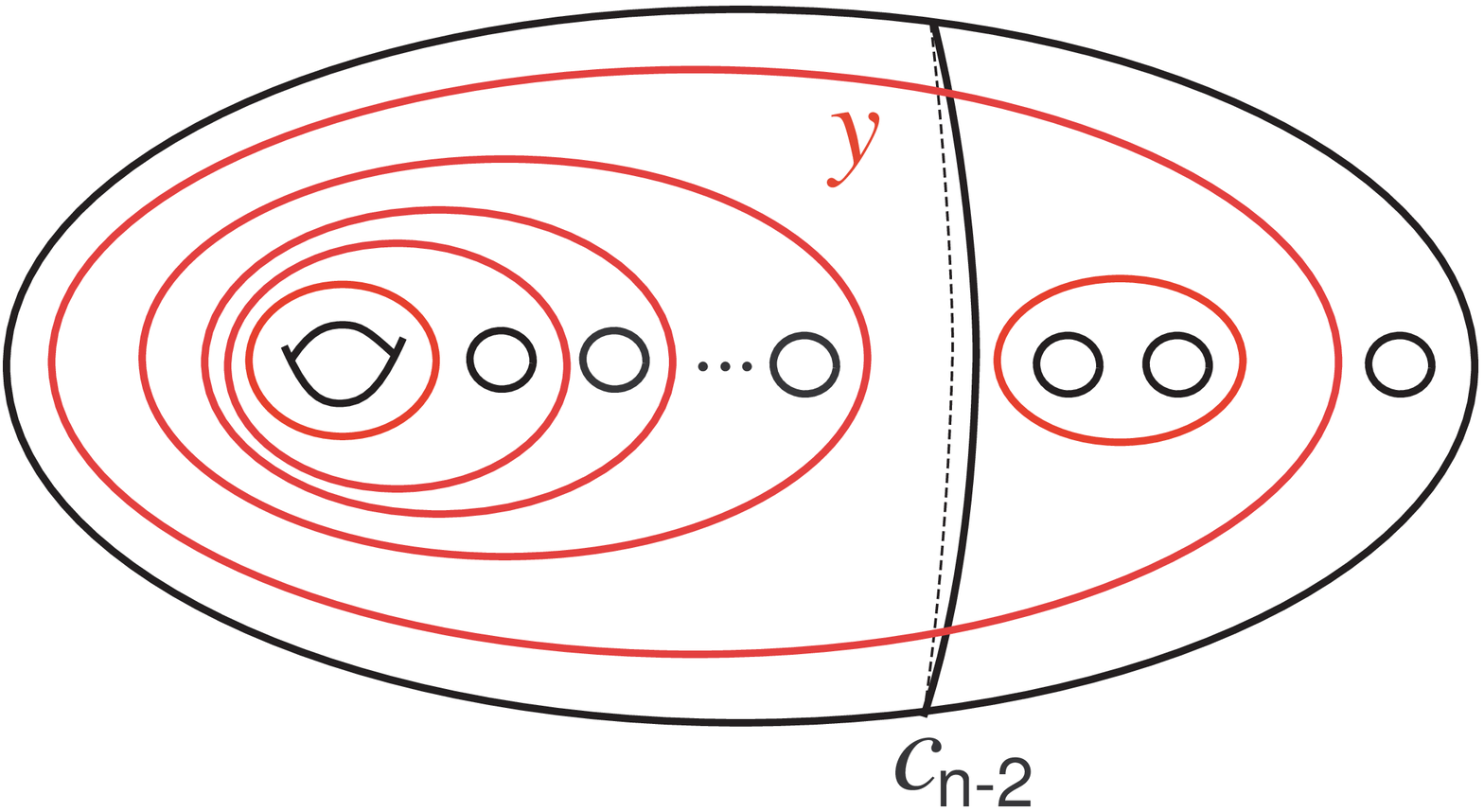}  \hspace{0.2cm} 
		\epsfxsize=2.3in \epsfbox{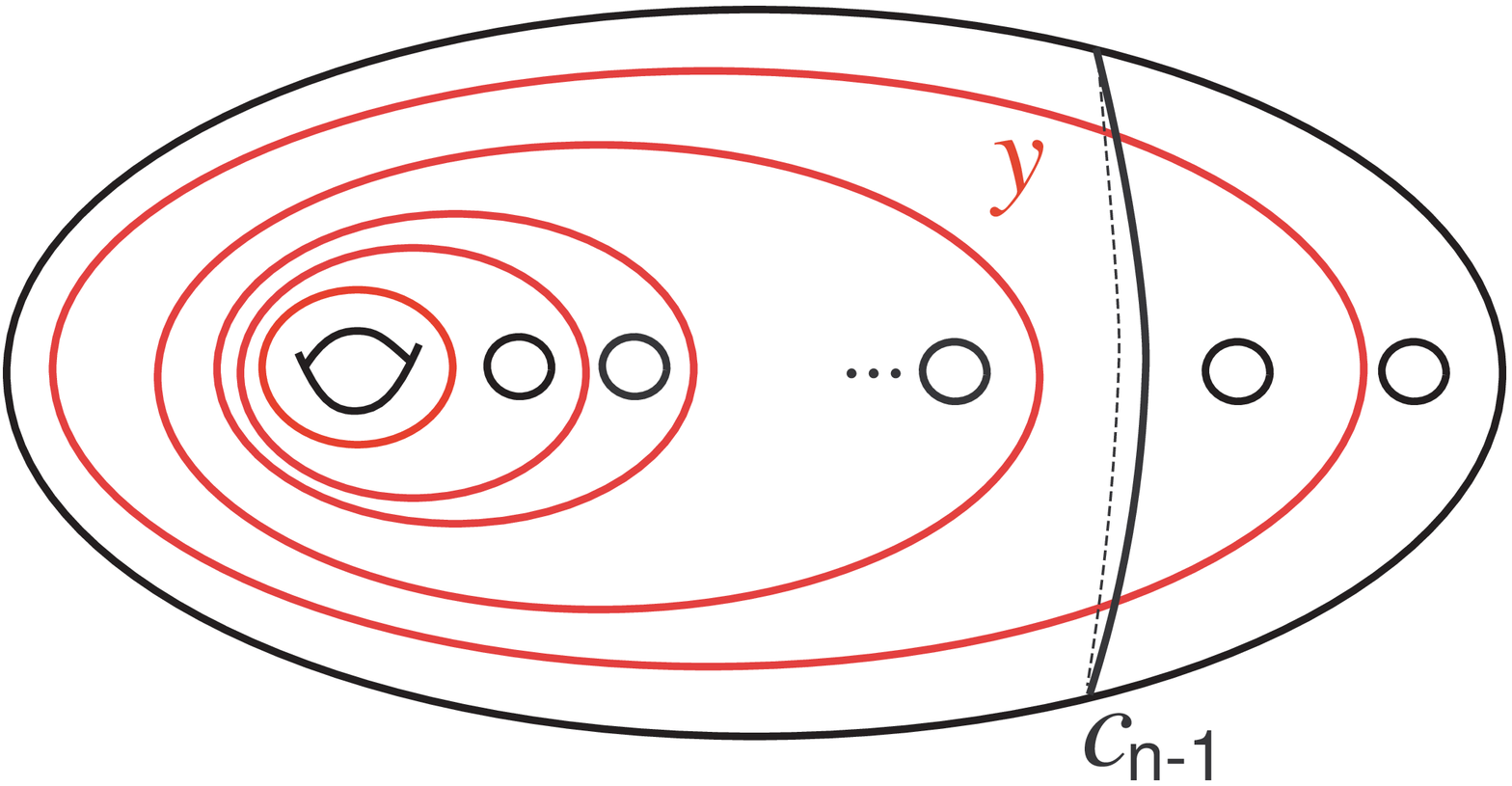} 
		
		(iii)   \hspace{5.6cm} (iv)  \vspace{0.3cm}
		
			\epsfxsize=2.3in \epsfbox{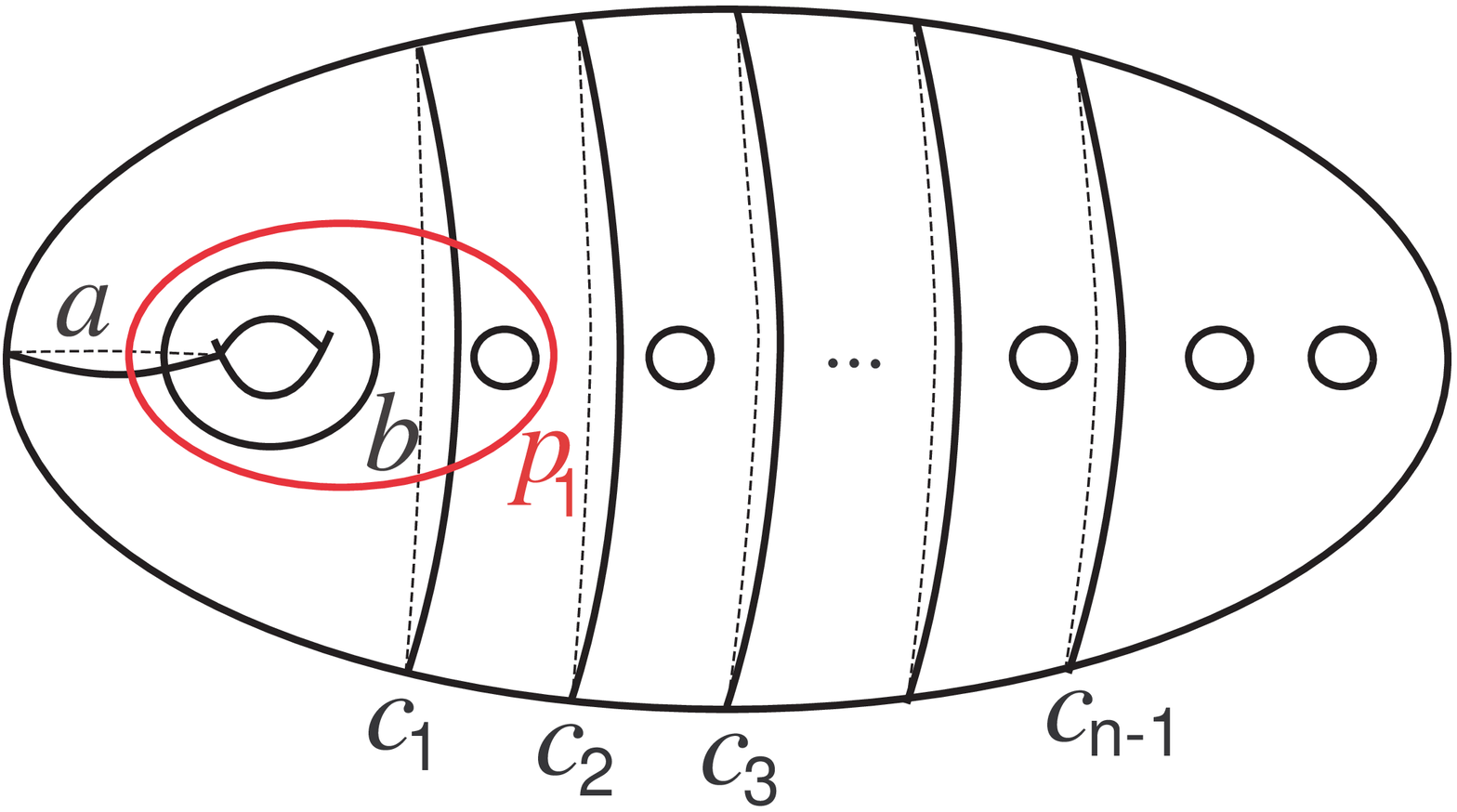}  \hspace{0.2cm} 
		\epsfxsize=2.3in \epsfbox{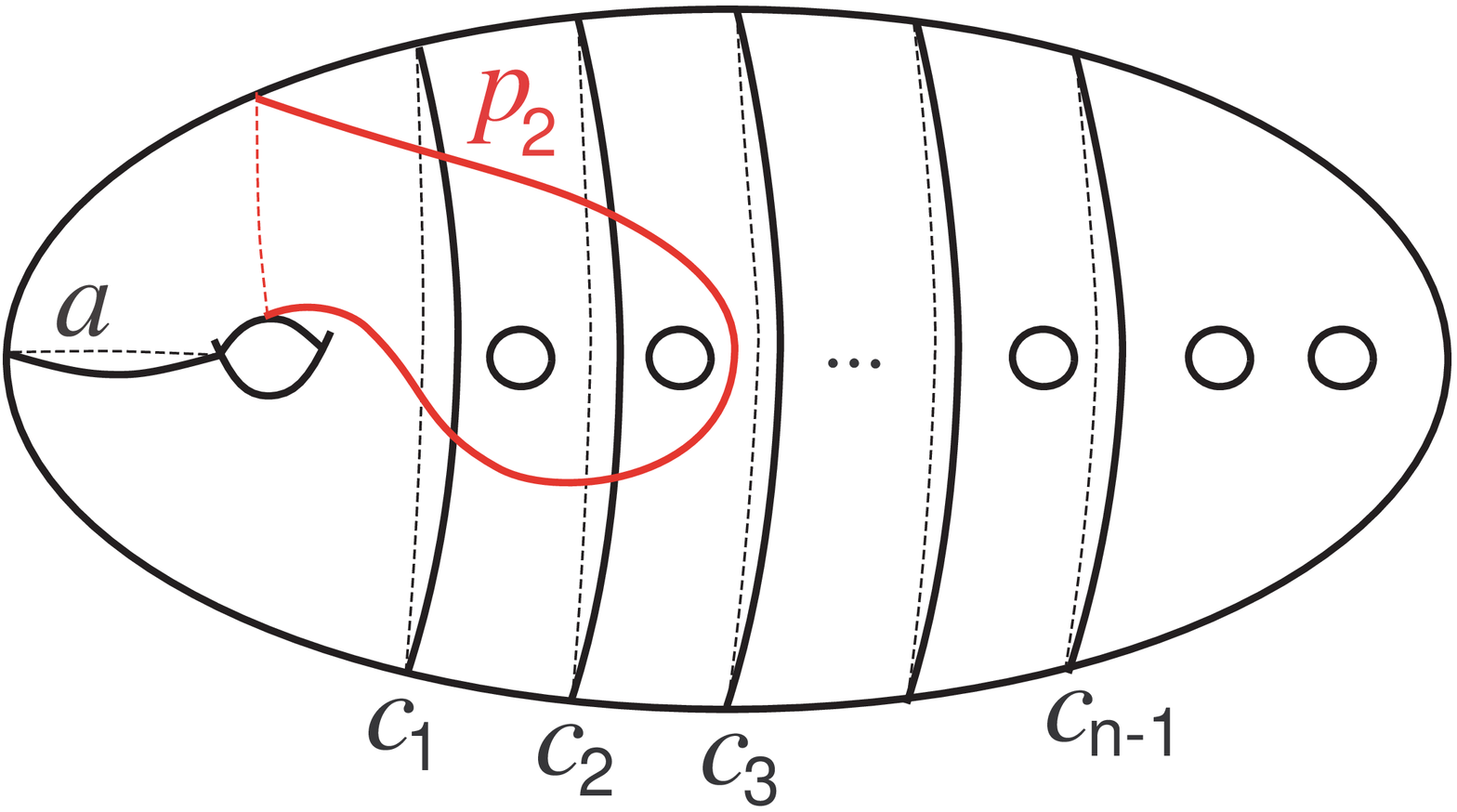}  
		
		(v)   \hspace{5.6cm} (vi)  \vspace{0.3cm}
		
		\epsfxsize=2.3in \epsfbox{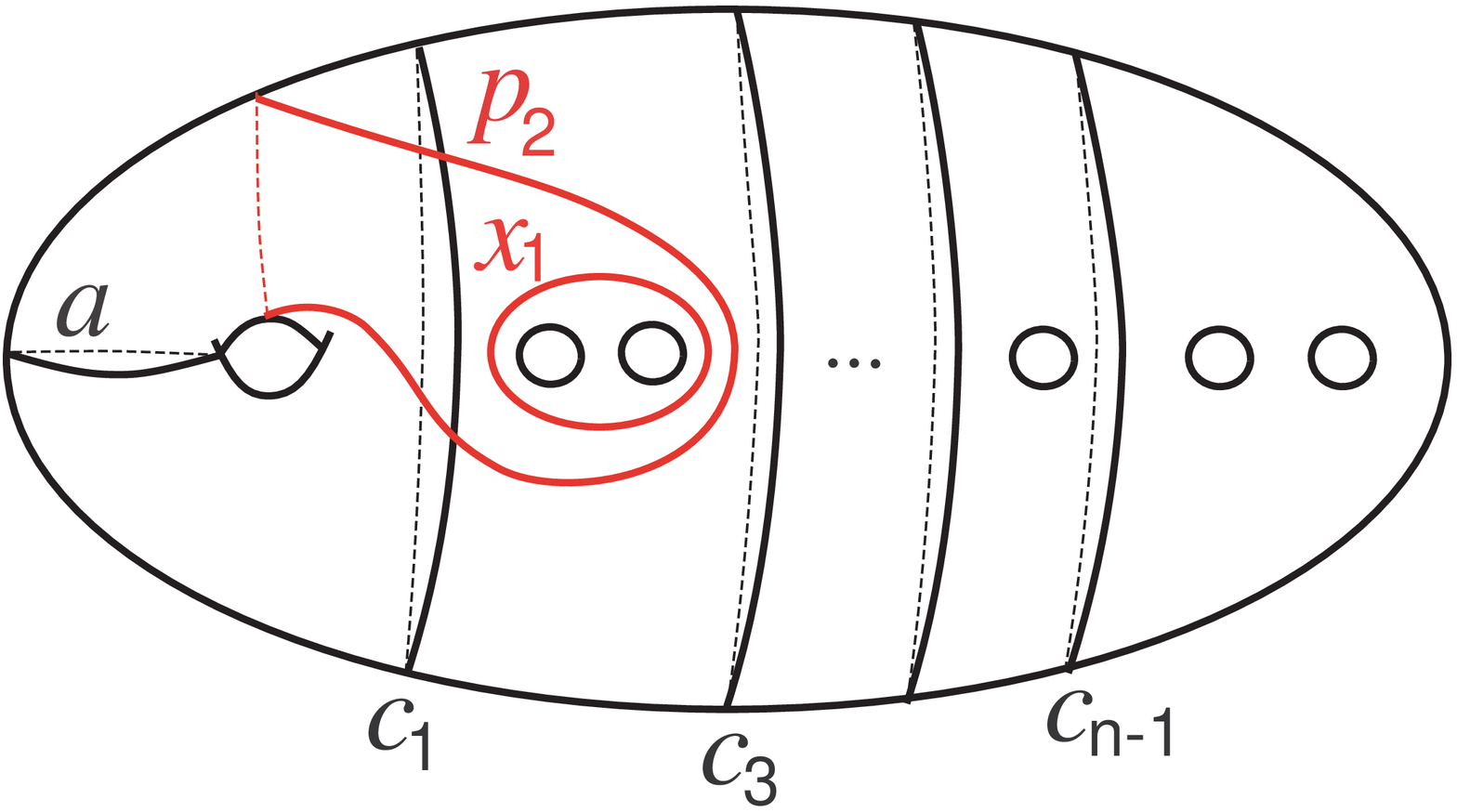}  \hspace{0.2cm} 
		\epsfxsize=2.3in \epsfbox{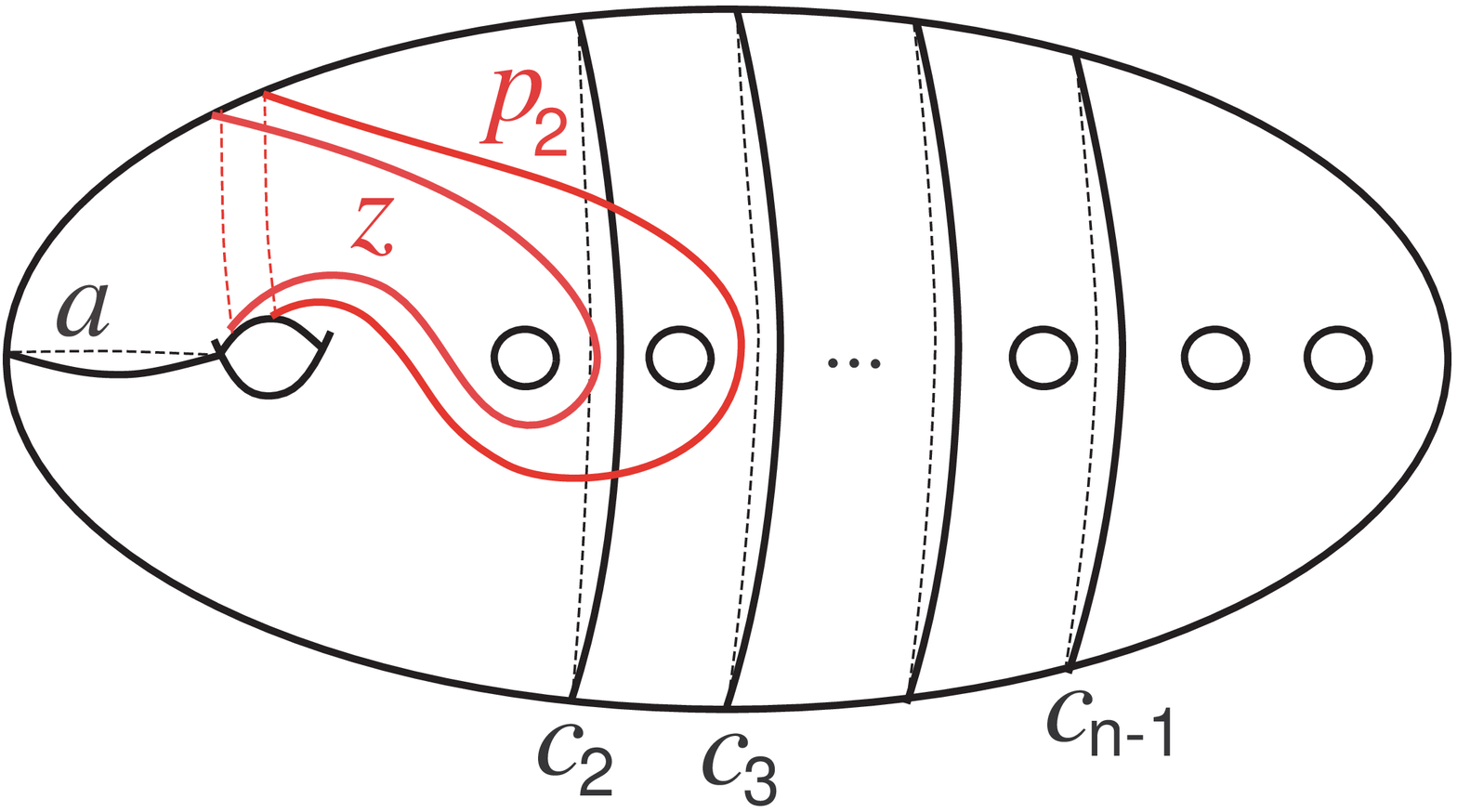}
		
		(vii)   \hspace{5.6cm} (viii)  \vspace{0.3cm}
		
		\caption{adjacency} \label{fig-5s} 
	\end{center} 
\end{figure}

\begin{lemma}
	\label{intersection} Let $\{y, c_1, c_2, \cdots c_{n-1}\}$ be the curves shown in Figure \ref{fig-5s}. Then we have $i(\lambda([y]), \lambda([c_i])) \neq 0$ for all $i= 1, 2, \cdots n-1$.\end{lemma}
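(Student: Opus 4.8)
We follow the template of the proof of Lemma~\ref{22}, using Lemma~\ref{pd-inj-1} to control pants decompositions. The plan is: fix $i\in\{1,\dots,n-1\}$, and from the corresponding picture among (i)--(viii) in Figure~\ref{fig-5s} build two things. First, a pants decomposition $P$ of $R$ with $c_i\in P$ and $i([y],[x])=0$ for every $x\in P\setminus\{c_i\}$, so that among the curves of $P$ the curve $y$ meets only $c_i$; the extra curves needed to fill out such a $P$ are read off from the figure (here the hypothesis $g=1$, $n\ge 3$ is what makes room for them). Second, an auxiliary curve $b$ with $i([b],[y])=0$ and $i([b],[c_i])=1$; such a $b$ exists because $c_i$ is non-separating in the configuration.

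Now let $P'$ be a set of pairwise disjoint representatives of $\lambda([P])$. By Lemma~\ref{pd-inj-1}, $P'$ is a pants decomposition of $R$, and $|P'|=|P|$ by Lemma~\ref{inj-1}. Since $i([y],[x])=0$ for $x\in P\setminus\{c_i\}$, edge preservation gives $i(\lambda([y]),\lambda([x]))=0$ for all such $x$, and Lemma~\ref{inj-1} gives $\lambda([y])\ne\lambda([x])$ for all such $x$. Hence $\lambda([y])$ is a vertex disjoint from, and distinct from, each of the $n-1$ curves of $P'\setminus\{\lambda([c_i])\}$. Suppose toward a contradiction that $i(\lambda([y]),\lambda([c_i]))=0$. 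Then $\lambda([y])$ is disjoint from every curve of the pants decomposition $P'$; as a pants decomposition is a maximal collection of pairwise disjoint, pairwise non-isotopic nontrivial curves, and $\lambda([y])$ differs from every curve of $P'$ except possibly $\lambda([c_i])$, we are forced to conclude $\lambda([y])=\lambda([c_i])$. But $i([b],[y])=0$ yields an edge between $[b]$ and $[y]$, hence an edge between $\lambda([b])$ and $\lambda([y])=\lambda([c_i])$; that is, $i(\lambda([b]),\lambda([c_i]))=0$, contradicting Lemma~\ref{22} applied to the pair $b,c_i$ with $i([b],[c_i])=1$. Therefore $i(\lambda([y]),\lambda([c_i]))\ne 0$, and since $i$ was arbitrary the lemma follows.

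The only genuinely case-dependent work is the first paragraph: for each of the eight configurations one must exhibit, concretely, a pants decomposition in which $y$ is disjoint from all curves but $c_i$, together with a curve meeting $c_i$ once and $y$ not at all. I expect this bookkeeping --- not the edge-preservation argument, which is uniform and mirrors Lemma~\ref{22} --- to be the main obstacle.
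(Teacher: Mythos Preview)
Your overall strategy mirrors the paper's, but there is a genuine gap in the step where you rule out $\lambda([y])=\lambda([c_i])$. You claim that each $c_i$ is nonseparating and use this to produce a curve $b$ with $i([b],[c_i])=1$. In the configuration of Figure~\ref{fig-5s} this is false: the curves $c_1,\dots,c_{n-1}$ are all \emph{separating} on the genus-one surface (for instance $c_1$ bounds a one-holed torus containing the nonseparating curve $a$, exactly as is used in the proof of Lemma~\ref{int-one}). A separating simple closed curve cannot be met transversally in a single point by any simple closed curve, so your auxiliary $b$ does not exist and Lemma~\ref{22} cannot be applied as you propose.

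The paper's proof exploits the asymmetry you overlooked: it is $y$, not $c_i$, that is nonseparating. Accordingly, the paper puts $y$ (rather than $c_i$) into the pants decomposition $P$, arranges that $c_i$ is disjoint from $P\setminus\{y\}$, and then distinguishes $\lambda([y])$ from $\lambda([c_i])$ via a curve $a$ with $i([a],[y])=1$ and $i([a],[c_i])=0$. Your argument can be repaired by making exactly this swap; but as written, the key separating step fails.
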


\begin{proof} To see that $i(\lambda([y]), \lambda([c_1])) \neq 0$, we complete $y$ to a pants decomposition $P$ using all the red curves given in Figure \ref{fig-5s} (i). Let $P'$ be a set of pairwise disjoint representatives of $\lambda([P])$. The set $P'$ is a pants decomposition on $R$. We see that $i([c_1], [x]) = 0$ for all $x \in P \setminus \{y\}$ and there is an edge between $[c_1]$ and $[x]$ for all $x \in P \setminus \{y\}$. Since $\lambda$ is edge preserving we have $i(\lambda([c_1]), \lambda([x])) = 0$ for all $x \in P \setminus \{y\}$ and there is an edge between $\lambda([c_1])$ and $\lambda([x])$ for all $x \in P \setminus \{y\}$. This implies that either $i(\lambda([c_1]), \lambda([y])) \neq 0$ or $\lambda([c_1]) = \lambda([y])$. Let $a$ be the curve shown in Figure \ref{fig-5s} (i). Since $i([a]), [y]) =1$, by Lemma \ref{22} we know $i(\lambda([a]), \lambda([y])) \neq 0$. But since $i([a]), [c_1]) =0$, we have $i(\lambda([a]), \lambda([c_1])) = 0$. So, $\lambda([c_1])$ cannot be equal to $\lambda([y])$. Hence,  $i(\lambda([y]), \lambda([c_1])) \neq 0$. With similar arguments we see that $i(\lambda([y]), \lambda([c_i])) \neq 0$ for all $i = 2, 3, \cdots, n-1$ (see Figures \ref{fig-5s} (ii)-(iv).) \end{proof} 	

\begin{lemma} \label{adj-2} Let $P = \{a, c_1, c_2, c_3, \cdots, c_{n-1}\}$ 
	where the curves are as shown in Figure \ref{fig-5s}. Let $P'$ be a pair of pants
	decomposition of $R$ such that $\lambda([P]) = [P']$. If $x, y \in P$ and $x$ is adjacent to $y$ w.r.t. $P$, then $\lambda([x]), \lambda([y])$ have representatives in $P'$ which are adjacent to each other w.r.t. $P'$.\end{lemma}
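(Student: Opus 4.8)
The plan is to characterize adjacency in a pants decomposition purely in terms of the curve graph, so that the edge-preserving map $\lambda$ (which we already know carries the pants decomposition $P$ to a pants decomposition $P'$ by Lemma \ref{pd-inj-1}) must respect it. The key topological fact I would use is this: two curves $x, y \in P$ are adjacent w.r.t.\ $P$ if and only if there is a vertex $z$ of $\mathcal{C}(R)$ with $i(z,x) \neq 0$, $i(z,y)\neq 0$, and $i(z,w)=0$ for every $w \in P \setminus \{x,y\}$ — in other words, a curve meeting exactly the two of them among the curves of $P$. If $x$ and $y$ cobound a pair of pants, such a $z$ exists (a curve crossing that pair of pants once through each of $x$ and $y$, realized as one of the configurations in Figure \ref{fig-5s}); conversely if $x$ and $y$ are not adjacent, any curve disjoint from all of $P \setminus\{x,y\}$ lives in the subsurface filled by those components and cannot meet both $x$ and $y$, because $x$ and $y$ bound no common complementary piece.

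Granting that characterization, the argument runs as follows. Suppose $x,y\in P$ are adjacent w.r.t.\ $P$, and pick such a witness curve $z$. Because $i(z,w)=0$ for all $w\in P\setminus\{x,y\}$, there are edges in $\mathcal{C}(R)$ between $z$ and each such $w$, hence between $\lambda(z)$ and $\lambda(w)$; so $i(\lambda(z),\lambda(w))=0$ for every $w\in P\setminus\{x,y\}$, using that $\lambda([P])=[P']$ consists of pairwise disjoint curves. On the other hand $i(z,x)\neq 0$ and $i(z,y)\neq 0$, and by Lemmas \ref{22} and \ref{intersection} (applied to the configurations of Figure \ref{fig-5s}, which is exactly the setup of the present lemma) we get $i(\lambda(z),\lambda(x))\neq 0$ and $i(\lambda(z),\lambda(y))\neq 0$. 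Thus $\lambda(z)$ is a curve meeting the two representatives of $\lambda(x),\lambda(y)$ in $P'$ and disjoint from all the others; by the converse direction of the characterization, applied now to the pants decomposition $P'$, the representatives of $\lambda(x)$ and $\lambda(y)$ in $P'$ must be adjacent w.r.t.\ $P'$.

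I would organize the write-up around the explicit curves already drawn in Figure \ref{fig-5s}: for each adjacent pair $x,y$ in $P=\{a,c_1,\dots,c_{n-1}\}$, exhibit the witness curve $z$ in that figure and note that the hypotheses $i(\lambda(z),\lambda(x))\neq 0$, $i(\lambda(z),\lambda(y))\neq 0$ are precisely the conclusions of Lemmas \ref{22} and \ref{intersection}. The main obstacle is the converse direction of the characterization, i.e.\ showing that a curve meeting exactly two elements of a pants decomposition forces those two elements to be adjacent — one must rule out the possibility that the witness curve, while disjoint from all of $P'\setminus\{\lambda(x),\lambda(y)\}$, intersects $\lambda(x)$ and $\lambda(y)$ in some ``non-adjacent'' pattern. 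This is handled by a cutting argument: cut $R$ along $P'\setminus\{\lambda(x),\lambda(y)\}$ and examine which components of the resulting surface can contain an arc crossing both remaining curves; only the presence of a common pair of pants permits it. A minor subtlety to address along the way is that $\lambda(x)$ and $\lambda(y)$ are genuinely distinct vertices of $P'$, which follows from Lemma \ref{inj-1} since $x$ and $y$ are disjoint.
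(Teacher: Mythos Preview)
Your overall strategy matches the paper's: exhibit, for each adjacent pair $x,y\in P$, a witness curve $z$ disjoint from $P\setminus\{x,y\}$ and intersecting both $x$ and $y$, and then argue that $\lambda(z)$ has the same property with respect to $P'$. The converse direction of your characterization (a curve meeting exactly two members of a pants decomposition forces those two to be adjacent) is correct and not the main issue, contrary to what you flag as the ``main obstacle''.

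The genuine gap is the step you pass over quickly: the claim that ``$i(\lambda(z),\lambda(x))\neq 0$ and $i(\lambda(z),\lambda(y))\neq 0$ are precisely the conclusions of Lemmas \ref{22} and \ref{intersection}.'' They are not. Lemma \ref{22} only applies when the geometric intersection number is $1$; but for the adjacent pairs $(c_i,c_{i+1})$ both curves are separating, so any witness $z$ meets each of them an even number of times, and Lemma \ref{22} says nothing. Lemma \ref{intersection} concerns the single specific curve $y$ of Figure \ref{fig-5s} and its intersections with the $c_i$; it does not speak about the witness curves $p_1,p_2,\dots$ at all. So for most adjacent pairs you have no direct way to conclude $i(\lambda(z),\lambda(c_i))\neq 0$.

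What the paper actually does at this step is more delicate. For each needed inequality $i(\lambda(p_k),\lambda(c_j))\neq 0$ it replaces one curve of $P$ to form an alternate pants decomposition $Q$ in which $p_k$ is disjoint from every curve of $Q$ except $c_j$. Since $\lambda([Q])$ is again a pants decomposition, this forces either $i(\lambda(p_k),\lambda(c_j))\neq 0$ or $\lambda(p_k)=\lambda(c_j)$; the equality is then ruled out by producing an auxiliary curve (sometimes $b$ with $i(b,p_k)=1$, sometimes the curve $y$) whose $\lambda$-image distinguishes $\lambda(p_k)$ from $\lambda(c_j)$, and it is only \emph{here}, in ruling out equality, that Lemmas \ref{22} and \ref{intersection} enter. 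You should rework your step 3 along these lines; as written, the heart of the argument is missing.
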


\begin{proof} We see that $a$ is adjacent to $c_1$ w.r.t. $P$. To see that 
 $\lambda([a]), \lambda([c_1])$ have representatives in $P'$ which are adjacent to each other w.r.t. $P'$ it is enough to find a curve $p_1$ shown in Figure \ref{fig-5s} (v) which intersects only $a$ and $c_1$ and not any other curve in $P$ and control that $i(\lambda([p_1]), \lambda([a])) \neq 0$, $i(\lambda([p_1]), \lambda([c_1])) \neq 0$ and $i(\lambda([p_1]), \lambda([x])) = 0$ for every $x \in P \setminus \{a, c_1\}$. Since $a$ and 
 $p_1$ have geometric intersection one, by using Lemma \ref{22} we see that $i(\lambda([p_1]), \lambda([a])) \neq 0$. Since $\lambda$ is edge preserving
 $i(\lambda([p_1]), \lambda([x])) = 0$ for every $x \in P \setminus \{a, c_1\}$. To see that $i(\lambda([p_1]), \lambda([c_1])) \neq 0$ we consider the 
 following: Let $Q= (P \setminus \{a\}) \cup \{b\}$ where the curve $b$ is as shown in Figure \ref{fig-5s} (v). Then $Q$ is a pants decomposition on $R$ and  $i(\lambda([p_1]), \lambda([x])) = 0$ for every $x \in Q \setminus \{c_1\}$.
So, either $i(\lambda([p_1]), \lambda([c_1])) \neq 0$ or $\lambda([p_1]) = \lambda([c_1])$. Since   $i([a]), [p_1]) =1$, by Lemma \ref{22}   $i(\lambda([a]), \lambda([p_1])) \neq 0$. But since $i([a]), [c_1]) =0$, we have $i(\lambda([a]), \lambda([c_1])) = 0$. So, $\lambda([p_1])$ cannot be equal to $\lambda([c_1])$. Hence, $i(\lambda([p_1]), \lambda([c_1])) \neq 0$.
This gives us that $\lambda([a]), \lambda([c_1])$ have representatives in $P'$ which are adjacent to each other w.r.t. $P'$.

To see that $\lambda([c_1]), \lambda([c_2])$ have representatives in $P'$ which are adjacent to each other w.r.t. $P'$ it is enough to find a curve $p_2$ shown in Figure \ref{fig-5s} (vi) which intersects only $c_1$ and $c_2$ and not any other curve in $P$ and control that $i(\lambda([p_2]), \lambda([c_1])) \neq 0$, $i(\lambda([p_2]), \lambda([c_2])) \neq 0$ and $i(\lambda([p_2]), \lambda([x])) = 0$ for every $x \in P \setminus \{c_1, c_2\}$. Since $\lambda$ is edge preserving
$i(\lambda([p_2]), \lambda([x])) = 0$ for every $x \in P \setminus \{c_1, c_2\}$. To see that $i(\lambda([p_2]), \lambda([c_1])) \neq 0$ we consider the 
following: Let $Q= (P \setminus \{c_2\}) \cup \{x_1\}$ where the curve $x_1$ is as shown in Figure \ref{fig-5s} (vii). Then $Q$ is a pants decomposition on $R$ and  $i(\lambda([p_2]), \lambda([x])) = 0$ for every $x \in Q \setminus \{c_1\}$. So, either $i(\lambda([p_2]), \lambda([c_1])) \neq 0$ or $\lambda([p_2]) = \lambda([c_1])$. Since   $i([b]), [p_2]) =1$, by Lemma \ref{22}   $i(\lambda([b]), \lambda([p_2])) \neq 0$. But since $i([b]), [c_1]) =0$, we have $i(\lambda([b]), \lambda([c_1])) = 0$. So, $\lambda([p_2])$ cannot be equal to $\lambda([c_1])$. Hence, $i(\lambda([p_2]), \lambda([c_1])) \neq 0$. To see that $i(\lambda([p_2]), \lambda([c_2])) \neq 0$ we consider $T= (P \setminus \{c_1\}) \cup \{z\}$ where the curve $z$ is as shown in Figure \ref{fig-5s} (viii). Then $T$ is a pants decomposition on $R$ and  $i(\lambda([p_2]), \lambda([x])) = 0$ for every $x \in T \setminus \{c_2\}$. So, either $i(\lambda([p_2]), \lambda([c_2])) \neq 0$ or $\lambda([p_2]) = \lambda([c_2])$. Since $i([b]), [p_2]) =1$, by Lemma \ref{22}   $i(\lambda([b]), \lambda([p_2])) \neq 0$. But since $i([b]), [c_2]) =0$, we have $i(\lambda([b]), \lambda([c_2])) = 0$. So, $\lambda([p_2])$ cannot be equal to $\lambda([c_2])$. Hence, $i(\lambda([p_2]), \lambda([c_2])) \neq 0$.
This gives us that $\lambda([c_1]), \lambda([c_2])$ have representatives in $P'$ which are adjacent to each other w.r.t. $P'$.

\begin{figure} 
	\begin{center}
		\epsfxsize=2.3in \epsfbox{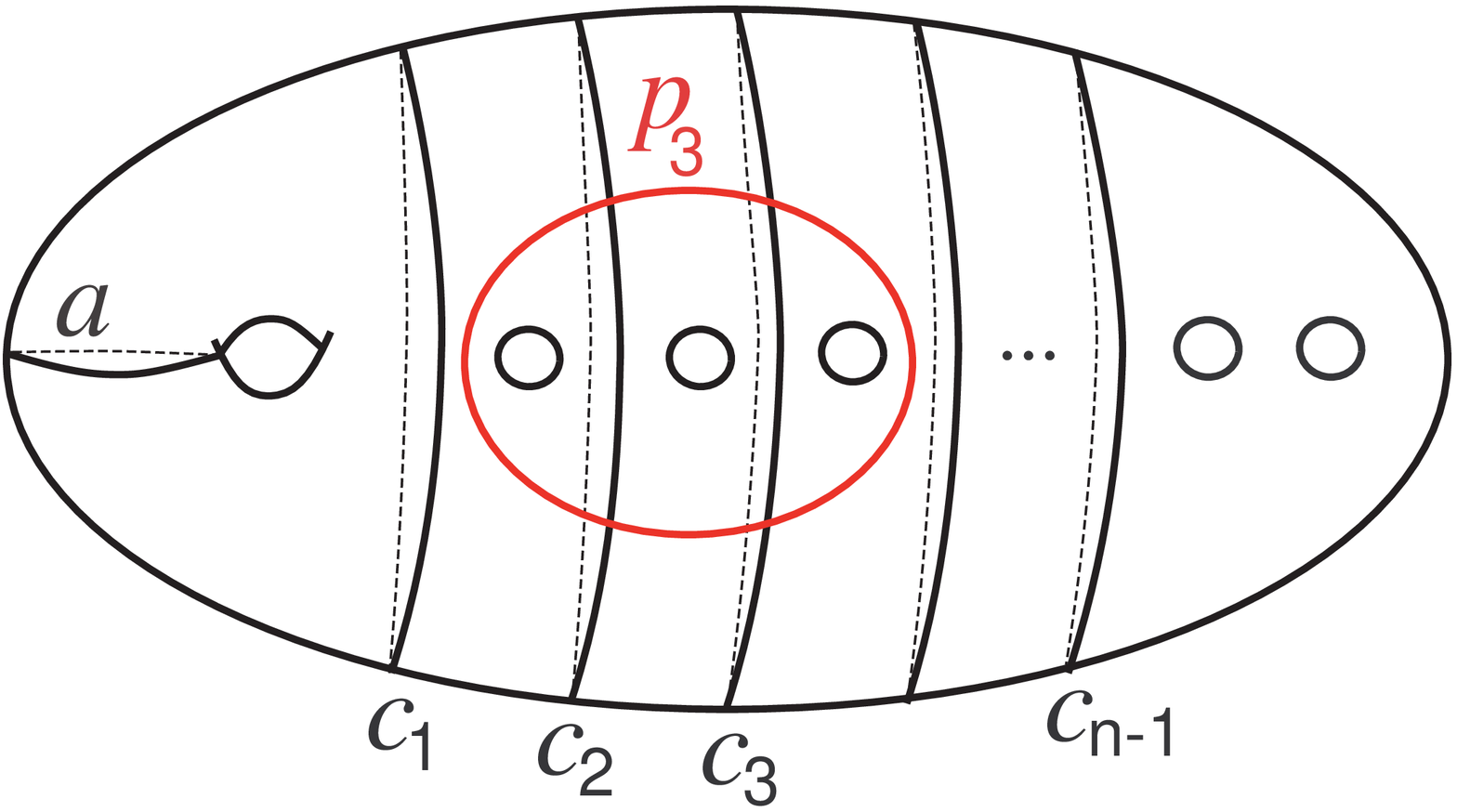}  \hspace{0.2cm} 
		\epsfxsize=2.3in \epsfbox{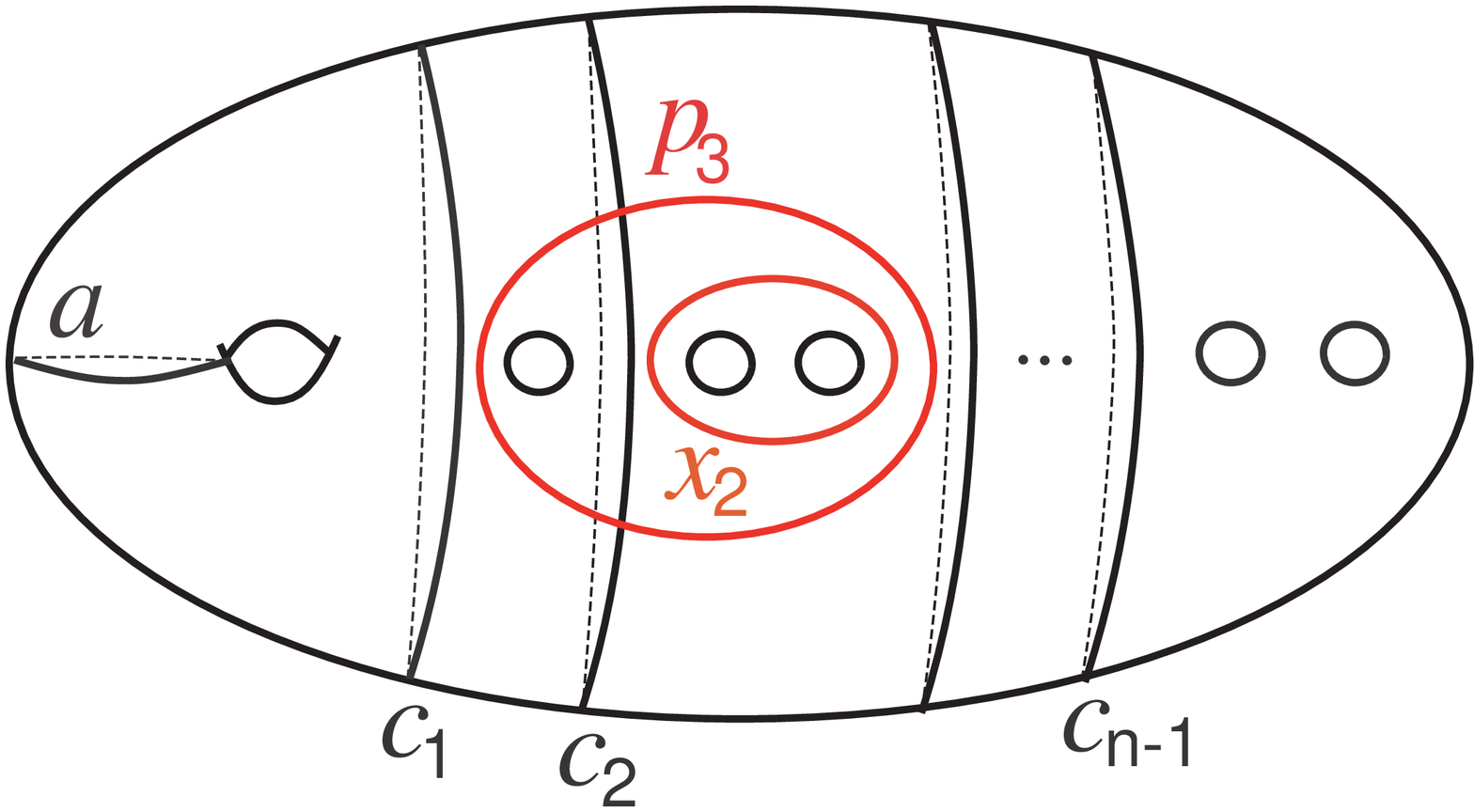}  
		
		(i)   \hspace{5.6cm} (ii)  \vspace{0.3cm}
		
		\epsfxsize=2.3in \epsfbox{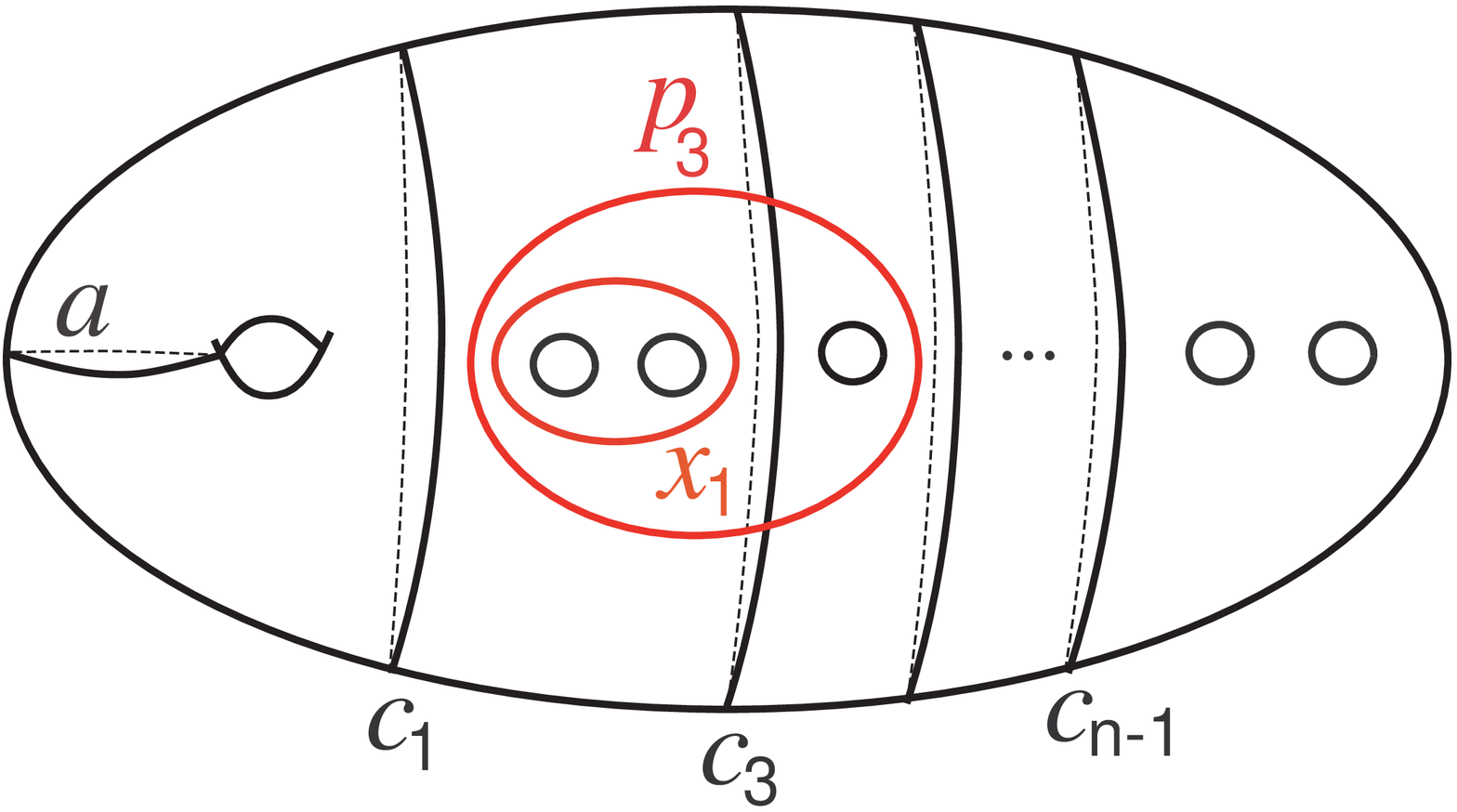}  \hspace{0.2cm} 
		\epsfxsize=2.3in \epsfbox{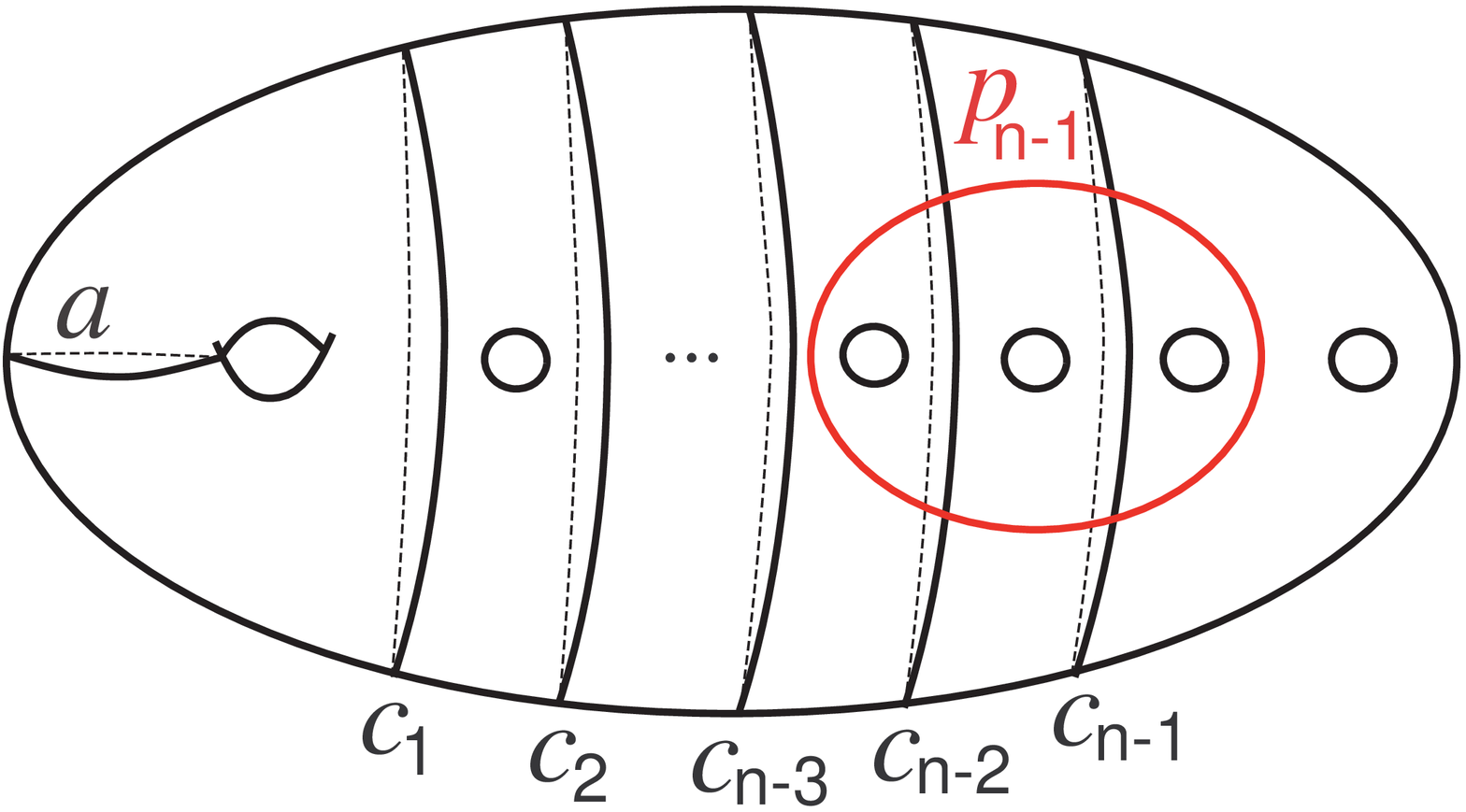}  
		
		(iii)   \hspace{5.6cm} (iv)  \vspace{0.3cm}	
		
		\epsfxsize=2.3in \epsfbox{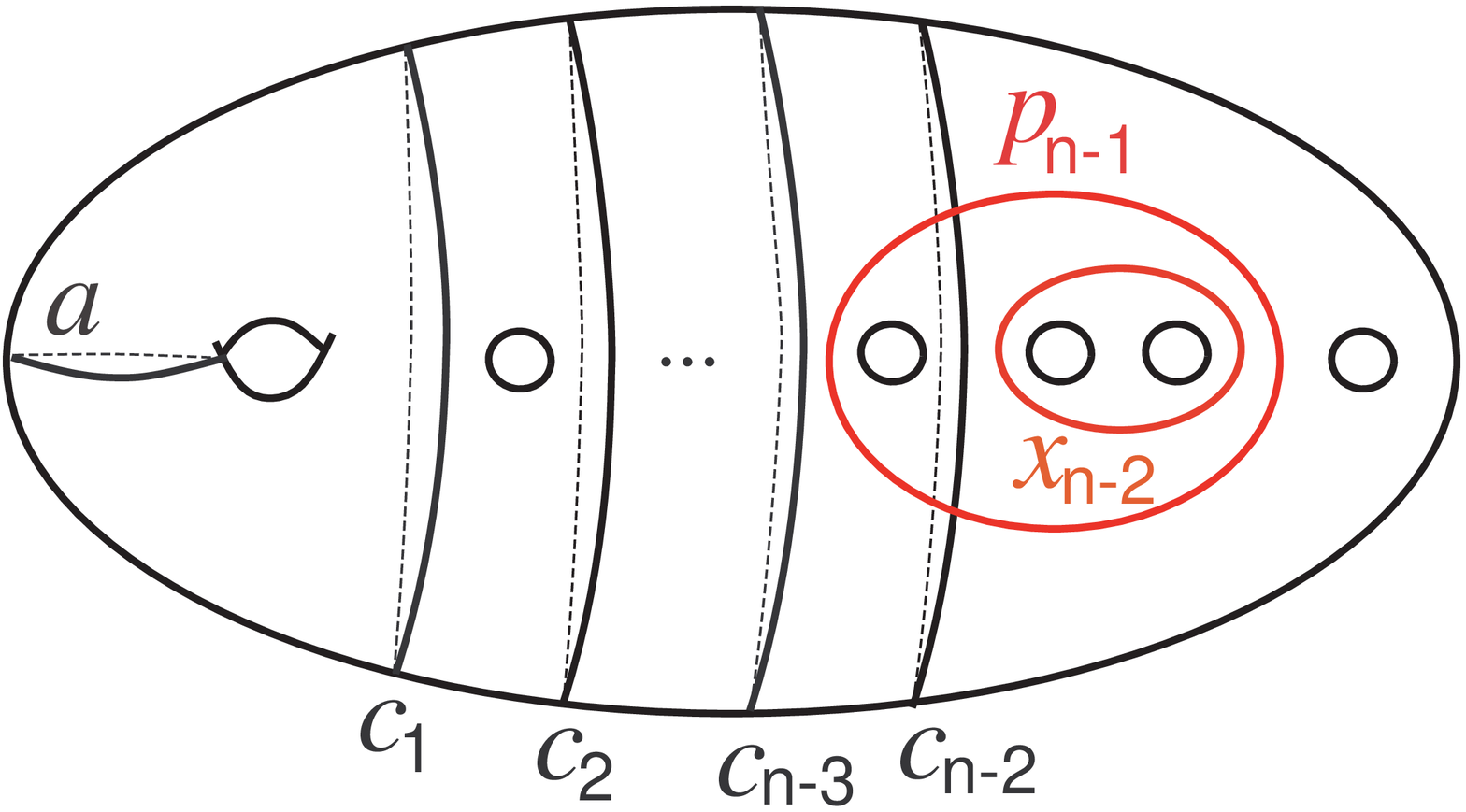}  \hspace{0.2cm} 	\epsfxsize=2.3in \epsfbox{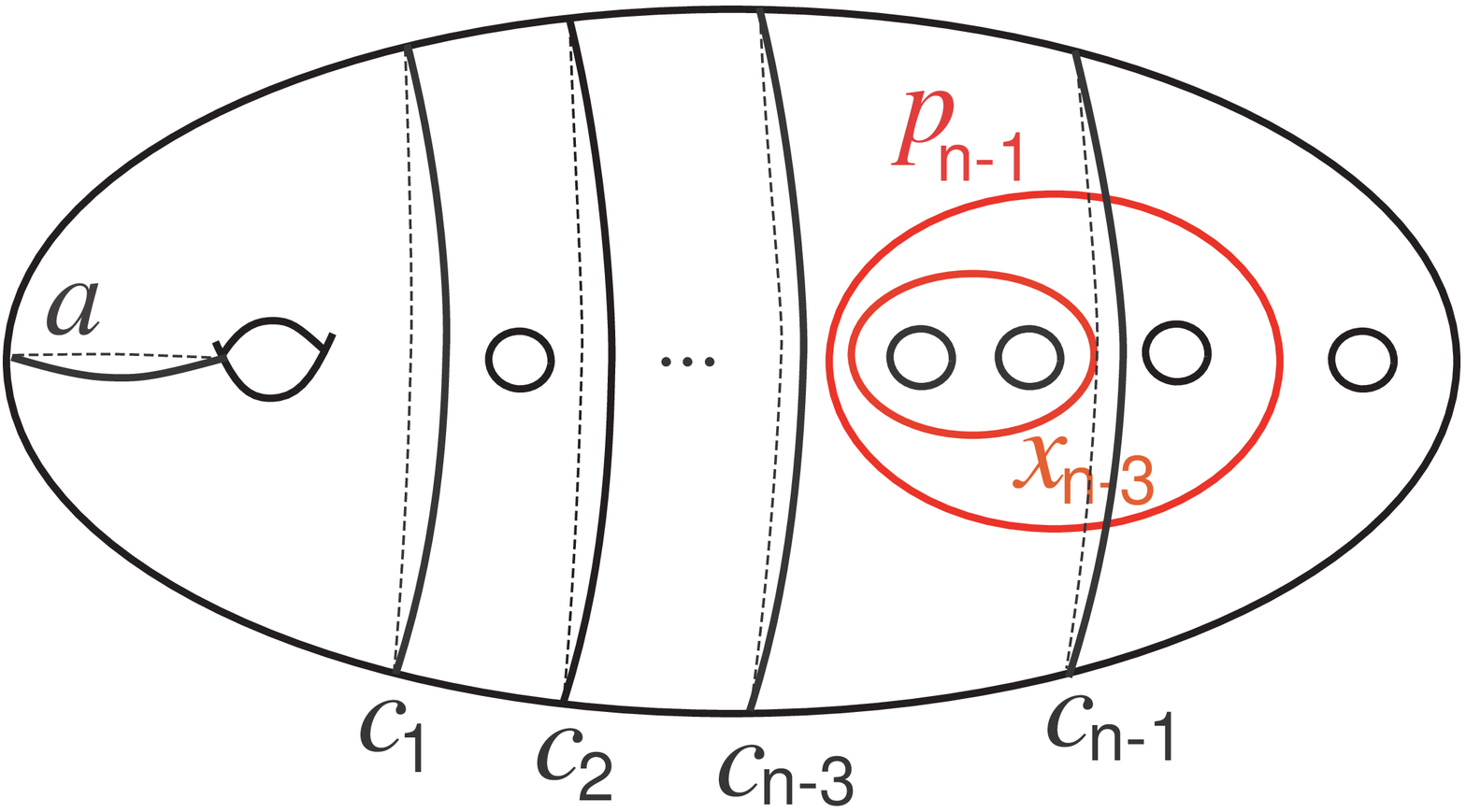}  
		
		(v)   \hspace{5.6cm} (vi)  \vspace{0.3cm}
		
		\epsfxsize=2.3in \epsfbox{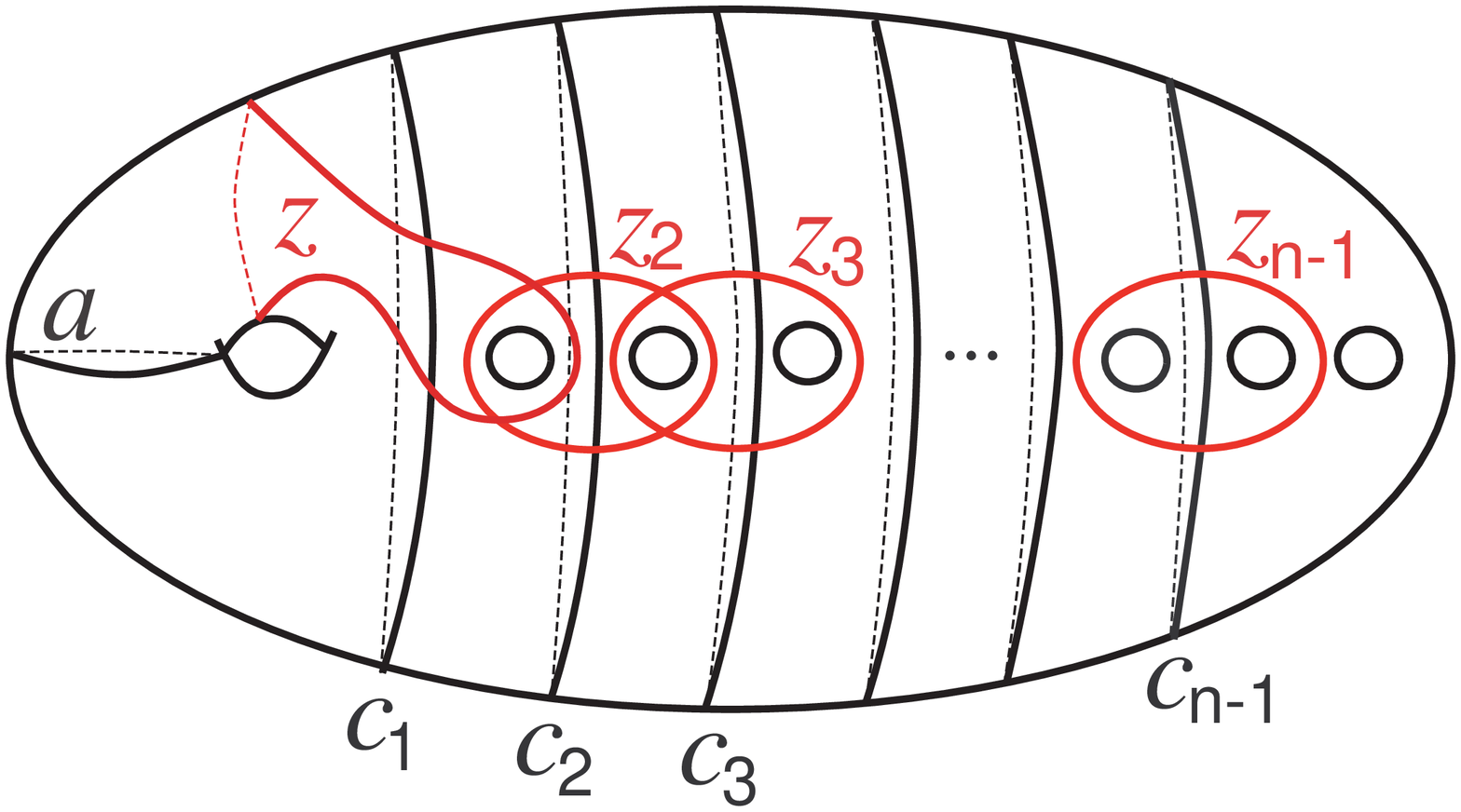}
		
		(vii)   
		
		\caption{adjacency, nonadjacency} \label{fig-6s}
	\end{center} 
\end{figure}	
 
To see that $\lambda([c_2]), \lambda([c_3])$ have representatives in $P'$ which are adjacent to each other w.r.t. $P'$ it is enough to find a curve $p_3$ shown in Figure \ref{fig-6s} (i) which intersects only $c_2$ and $c_3$ and not any other curve in $P$ and control that $i(\lambda([p_3]), \lambda([c_2])) \neq 0$, $i(\lambda([p_3]), \lambda([c_3])) \neq 0$ and $i(\lambda([p_3]), \lambda([x])) = 0$ for every $x \in P \setminus \{c_2, c_3\}$. Since $\lambda$ is edge preserving
$i(\lambda([p_3]), \lambda([x])) = 0$ for every $x \in P \setminus \{c_2, c_3\}$. To see that $i(\lambda([p_3]), \lambda([c_2])) \neq 0$ we consider 
$U= (P \setminus \{c_3\}) \cup \{x_2\}$ where the curve $x_2$ is as shown in Figure \ref{fig-6s} (ii). Then $U$ is a pants decomposition on $R$ and  $i(\lambda([p_3]), \lambda([x])) = 0$ for every $x \in U \setminus \{c_2\}$. So, either $i(\lambda([p_3]), \lambda([c_2])) \neq 0$ or $\lambda([p_3]) = \lambda([c_2])$. By Lemma \ref{intersection} $i(\lambda([y]), \lambda([c_2])) \neq 0$. Since $i(([y]), [p_3]) =0$, we have $i(\lambda([y]), \lambda([p_3])) = 0$. So, $\lambda([p_3])$ cannot be equal to $\lambda([c_2])$. Hence, $i(\lambda([p_3]), \lambda([c_2])) \neq 0$. To see that $i(\lambda([p_3]), \lambda([c_2])) \neq 0$, we consider $V= (P \setminus \{c_2\}) \cup \{x_1\}$ where the curve $x_1$ is as shown in Figure \ref{fig-6s} (iii). Then $V$ is a pants decomposition on $R$ and $i(\lambda([p_3]), \lambda([x])) = 0$ for every $x \in V \setminus \{c_3\}$. So, either $i(\lambda([p_3]), \lambda([c_3])) \neq 0$ or $\lambda([p_3]) = \lambda([c_3])$. By Lemma \ref{intersection} $i(\lambda([y]), \lambda([c_3])) \neq 0$. Since $i(([y]), [p_3]) =0$, we have $i(\lambda([y]), \lambda([p_3])) = 0$. So, $\lambda([p_3])$ cannot be equal to $\lambda([c_3])$. Hence, $i(\lambda([p_3]), \lambda([c_3])) \neq 0$. 
This gives us that $\lambda([c_2]), \lambda([c_3])$ have representatives in $P'$ which are adjacent to each other w.r.t. $P'$. Proof of $\lambda([c_i]), \lambda([c_{i+1}])$ have representatives in $P'$ which are adjacent to each other w.r.t. $P'$ for $i= 2, 3, \cdots, n-1$ is similar to the proof of this last case (see  Figure \ref{fig-6s} (iv)-(vi)).\end{proof}
  
\begin{lemma}
	\label{nonadj-2} Let $P = \{a, c_1, c_2, c_3, \cdots, c_{n-1}\}$ 
	where the curves are as shown in Figure \ref{fig-6s} (vii). Let $P'$ be a pair of pants
	decomposition of $R$ such that $\lambda([P]) = [P']$. If $x, y \in P$ and $x$ is not adjacent to $y$ w.r.t. $P$, then $\lambda([x]), \lambda([y])$ have representatives in $P'$ which are not adjacent to each other w.r.t. $P'$.\end{lemma}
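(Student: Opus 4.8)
The plan is to mirror the structure of Lemma~\ref{adj-2}, but now to \emph{rule out} adjacency rather than establish it. Recall that in a pants decomposition $P'$ two curves $u,v$ fail to be adjacent w.r.t.\ $P'$ precisely when every pair of pants of $P'$ has at most one of $u,v$ on its boundary; equivalently, there is no curve $q$ meeting only $u$ and $v$ among the elements of $P'$ (a curve $q$ with $i([q],u)\ne 0$, $i([q],v)\ne 0$, and $i([q],w)=0$ for all other $w\in P'$ forces $u,v$ to lie on a common pair of pants). So the goal becomes: for $x,y\in P$ non-adjacent w.r.t.\ $P$, show that no curve $q$ on $R$ can have $i(\lambda([x]),[q])\ne 0$, $i(\lambda([y]),[q])\ne 0$, and $i(\lambda([w]),[q])=0$ for all other $w\in P$. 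Actually it is cleaner to argue the contrapositive structurally: I will show that adjacency in $P'$ is detected by a purely graph-theoretic condition on $\lambda([P])$ that is preserved, and that non-adjacency in $P$ forces the negation of that condition.

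Here is the concrete approach. First I would fix the model configuration of Figure~\ref{fig-6s}(vii) and enumerate the non-adjacent pairs $\{x,y\}$ in $P=\{a,c_1,\dots,c_{n-1}\}$ up to the obvious symmetry; in the genus one, $n$ boundary case the chain $c_1,\dots,c_{n-1}$ together with $a$ has a well-understood adjacency pattern, so only a few combinatorial types occur (a curve $c_i$ and a curve $c_j$ with $|i-j|\ge 2$, and $a$ versus the $c_i$ that are not adjacent to it). For each such type, the key step is to produce an auxiliary pants decomposition $\widehat P$ obtained from $P$ by a single curve replacement (as in Lemma~\ref{adj-2}, swapping out one curve of $P$ for a curve like $b$, $x_1$, $z$, etc.\ shown in Figure~\ref{fig-6s}) with the property that $x$ and $y$ lie on a common pair of pants of $\widehat P$ is \emph{false} and, more importantly, $\lambda$ applied to $\widehat P$ still gives a pants decomposition by Lemma~\ref{pd-inj-1}. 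Then, assuming for contradiction that $\lambda([x])$ and $\lambda([y])$ did have adjacent representatives in $P'$, I would find a curve $q$ realizing the adjacency, push it through using the auxiliary decompositions $\widehat P$ and Lemmas~\ref{22} and~\ref{intersection} to pin down exactly which curves of the various decompositions $\lambda([q])$ can and cannot intersect, and derive that $\lambda([q])$ would have to intersect a curve whose image it is forbidden to intersect (exactly the trick used repeatedly in the proof of Lemma~\ref{adj-2}: $i([a],[c_1])=0$ gives $i(\lambda([a]),\lambda([c_1]))=0$, which blocks an equality of images).

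More precisely, for the generic case $x=c_i$, $y=c_j$ with $j\ge i+2$, I would take the curve $q$ that an adjacency in $P'$ would provide, note it is disjoint (up to isotopy) from $\lambda([c_k])$ for all $k\ne i,j$, and then exploit that some curve $c_m$ with $i<m<j$ has $i(\lambda([c_m]),\lambda([c_i]))\ne 0\ne i(\lambda([c_m]),\lambda([c_j]))$ by Lemma~\ref{intersection}-type arguments applied after a suitable replacement, while $c_m$ is disjoint from the preimage model of $q$, giving $i(\lambda([c_m]),\lambda([q]))=0$; combined with the fact that in $P'$ the curve realizing adjacency of $\lambda([c_i]),\lambda([c_j])$ together with a pair of pants containing both of them cannot avoid the intermediate curve $\lambda([c_m])$ (since $\lambda([c_m])$ is ``between'' them in the relevant subsurface), this yields a contradiction. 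The case of $a$ versus a non-adjacent $c_i$ is handled identically using the intersection-one curves guaranteed by Lemma~\ref{22}.

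The main obstacle I anticipate is \textbf{bookkeeping of the subsurface positions} rather than any single hard idea: one must be careful that the auxiliary curves $b, x_1, x_2, z$ substituted into $P$ genuinely keep the result a pants decomposition and genuinely change the adjacency relation only in the intended way, and that the ``intermediate curve'' argument really does obstruct the hypothetical adjacency curve $q$ in $P'$ — i.e.\ that one cannot route $q$ around $\lambda([c_m])$. This is where the low-genus hypothesis $n\ge 3$ enters: there are enough boundary components to have room for the replacement curves, and few enough that the adjacency graph of $P$ is a simple chain-plus-pendant so the non-adjacent pairs are easy to list. I expect the write-up to consist of one representative case worked in full (mimicking the eight figure panels of Lemma~\ref{adj-2}) and a remark that the remaining non-adjacent pairs are symmetric, exactly as the author did for the adjacency lemma.
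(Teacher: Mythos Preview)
Your approach has a genuine gap that is not just bookkeeping. You assume adjacency of $\lambda([x]),\lambda([y])$ in $P'$ and pick a curve $q$ on $R$ witnessing it; but $q$ lives in the \emph{target} and there is no reason for $q$ to lie in the image of $\lambda$, so expressions like ``$\lambda([q])$'' and phrases like ``the preimage model of $q$'' are meaningless here. Edge-preservation only lets you push information \emph{forward} through $\lambda$, not pull a mystery target curve back. Your central computation is also wrong as written: for $c_m,c_i\in P$ we have $i([c_m],[c_i])=0$, hence $i(\lambda([c_m]),\lambda([c_i]))=0$, the opposite of what you assert. Lemma~\ref{intersection} produces a curve $y$ (not any $c_m$) whose image hits every $\lambda([c_i])$, but $y$ intersects \emph{all} the $c_i$'s, so it cannot serve as an ``intermediate obstruction'' disjoint from $q$.

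The paper's argument is structurally different and avoids touching the hypothetical $q$ altogether. For each non-adjacent pair $x,y\in P$ it exhibits two \emph{source} curves $w,t$ that are disjoint from each other, with $w$ hitting only $x$ in $P$ and $t$ hitting only $y$ in $P$, and then checks (using Lemmas~\ref{22} and~\ref{intersection} exactly as in Lemma~\ref{adj-2}) that $i(\lambda([w]),\lambda([x]))\neq 0$, $i(\lambda([t]),\lambda([y]))\neq 0$, and all other relevant intersections of images vanish. The existence of such a disjoint pair of ``single-curve duals'' in $P'$ is what forces non-adjacency: if $\lambda([x]),\lambda([y])$ shared a pair of pants $Q$, the essential arc of $\lambda([w])$ in $Q$ (from $\lambda([x])$ to itself) would have to cross the essential arc of $\lambda([t])$ in $Q$ (from $\lambda([y])$ to itself), contradicting $i(\lambda([w]),\lambda([t]))=0$. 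Concretely the paper uses $b$ as the dual to $a$, a curve $z$ as the dual to $c_1$, and curves $z_i$ as duals to $c_i$ for $i\ge 2$; the bulk of the proof is verifying $i(\lambda([z]),\lambda([c_1]))\neq 0$ and $i(\lambda([z_i]),\lambda([c_i]))\neq 0$ by the same replacement-pants-decomposition trick you describe, and then simply pairing off $(b,z_i)$, $(z,z_i)$, $(z_i,z_j)$ for the various non-adjacent pairs. So the replacement-decompositions idea you sketched is used, but to certify intersections of \emph{source} duals, not to analyze a target curve $q$.
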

 
\begin{proof} Consider the curves $z, z_i$ given in Figure \ref{fig-6s} (vii).
We will first show that $i(\lambda([c_1]), \lambda([z])) \neq 0$. We see that $i(\lambda([z]), \lambda([x])) = 0$ for every $x \in P \setminus \{c_1\}$. So, either $i(\lambda([z]), \lambda([c_1])) \neq 0$ or $\lambda([z]) = \lambda([c_1])$. Since $i([b], [z]) =1$, we have $i(\lambda([b]), \lambda([z)) \neq 0$ by Lemma \ref{22}. But since $i([b]), [c_1]) =0$, we have $i(\lambda([b]), \lambda([c_1])) = 0$. So, $\lambda([z])$ cannot be equal to $\lambda([c_1])$. Hence, $i(\lambda([z]), \lambda([c_1])) \neq 0$.
Next we will show that $i(\lambda([c_2]), \lambda([z_2])) \neq 0$. We see that $i(\lambda([z_2]), \lambda([x])) = 0$ for every $x \in P \setminus \{c_2\}$. So, either $i(\lambda([z_2]), \lambda([c_2])) \neq 0$ or $\lambda([z_2]) = \lambda([c_2])$. We have $i(\lambda([y]), \lambda([c_2])) \neq 0$ by Lemma \ref{intersection}. But since $i([y]), [z_2]) =0$, we have $i(\lambda([y]), \lambda([z_2])) = 0$. So, $\lambda([z_2])$ cannot be equal to $\lambda([c_2])$. Hence, $i(\lambda([z_2]), \lambda([c_2])) \neq 0$. Similarly 
we see that $i(\lambda([z_i]), \lambda([c_i])) \neq 0$ for all $i = 3, 4, \cdots, n-1$. 

To see that if $x, y \in P$ and $x$ is not adjacent to $y$ w.r.t. $P$, then $\lambda([x]), \lambda([y])$ have representatives in $P'$ which are not adjacent to each other w.r.t. $P'$, it is enough to find two disjoint curves $w, t$ such that $w$ intersects only $x$ nontrivially and not the other curves in $P$, $t$ intersects only $y$ nontrivially and not the other curves in $P$ and $i(\lambda([w]), \lambda([x])) \neq 0$, $i(\lambda([t]), \lambda([y])) \neq 0$,  $i(\lambda([w]), \lambda([q])) = 0$, for all $q \in P \setminus \{ x\}$, $i(\lambda([t]), \lambda([q])) = 0$, for all $q \in P \setminus \{y\}$,
$i(\lambda([t]), \lambda([w])) = 0$. For the pair $a, c_i$, when $i = 2, 3, \cdots, n-1$, the curves $b, z_i$ 
would satisfy it, where the curve $b$ is as shown in Figure \ref{fig-5s} (v). For the pair $c_1, c_i$, when $i = 3, 4, \cdots, n-1$, the curves $z, z_i$ 
would satisfy it. For the pair $c_2, c_i$, when $i = 4, 5, \cdots, n-1$,  the curves $z_2, z_i$ 
would satisfy it. Similarly, we see that nonadjacency is preserved for every nonadjacent pair in $P$.\end{proof}

\begin{figure} 
	\begin{center}    
				\epsfxsize=2.2in \epsfbox{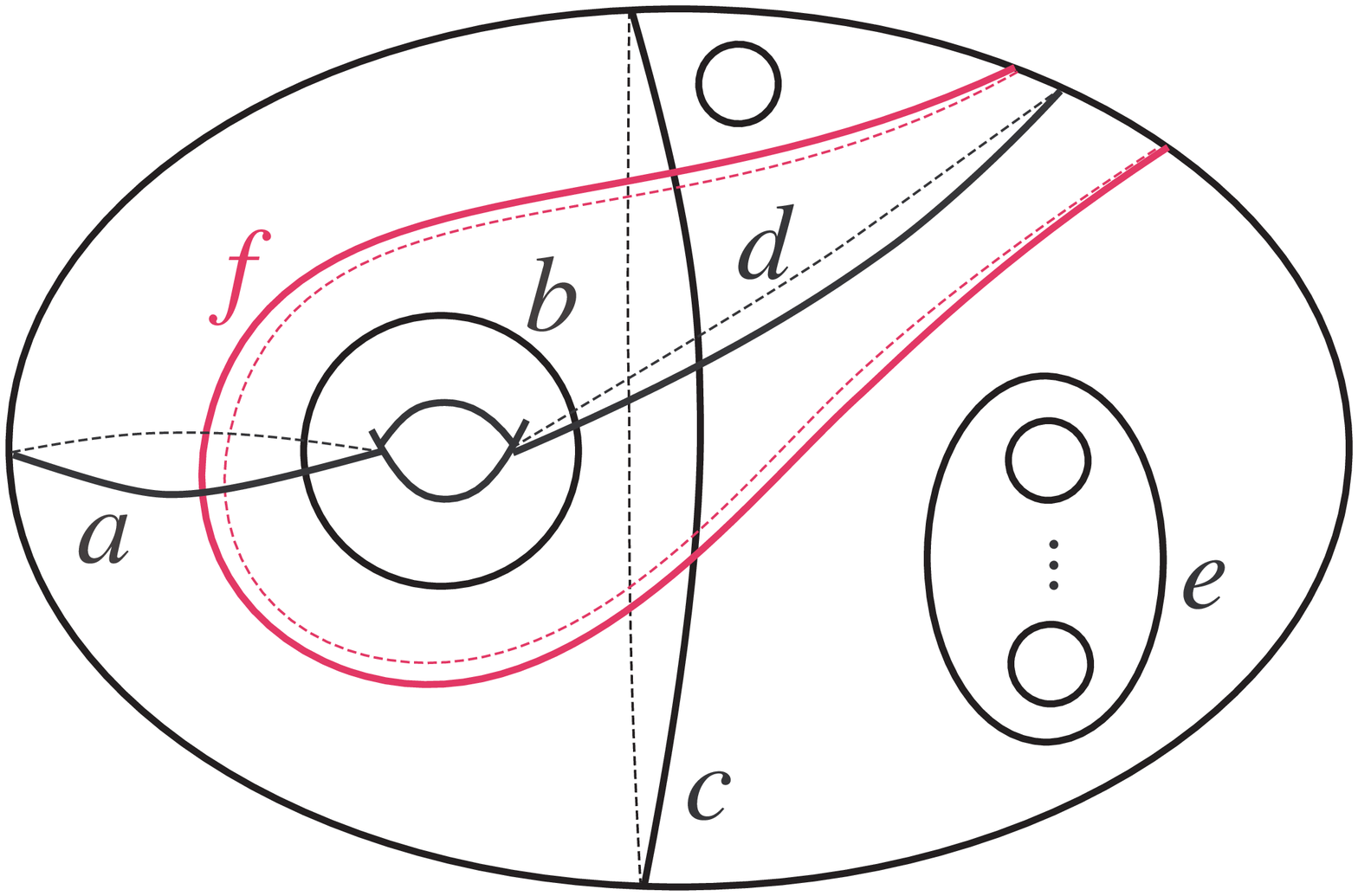} 
		\hspace{0.2cm}  \epsfxsize=2.2in \epsfbox{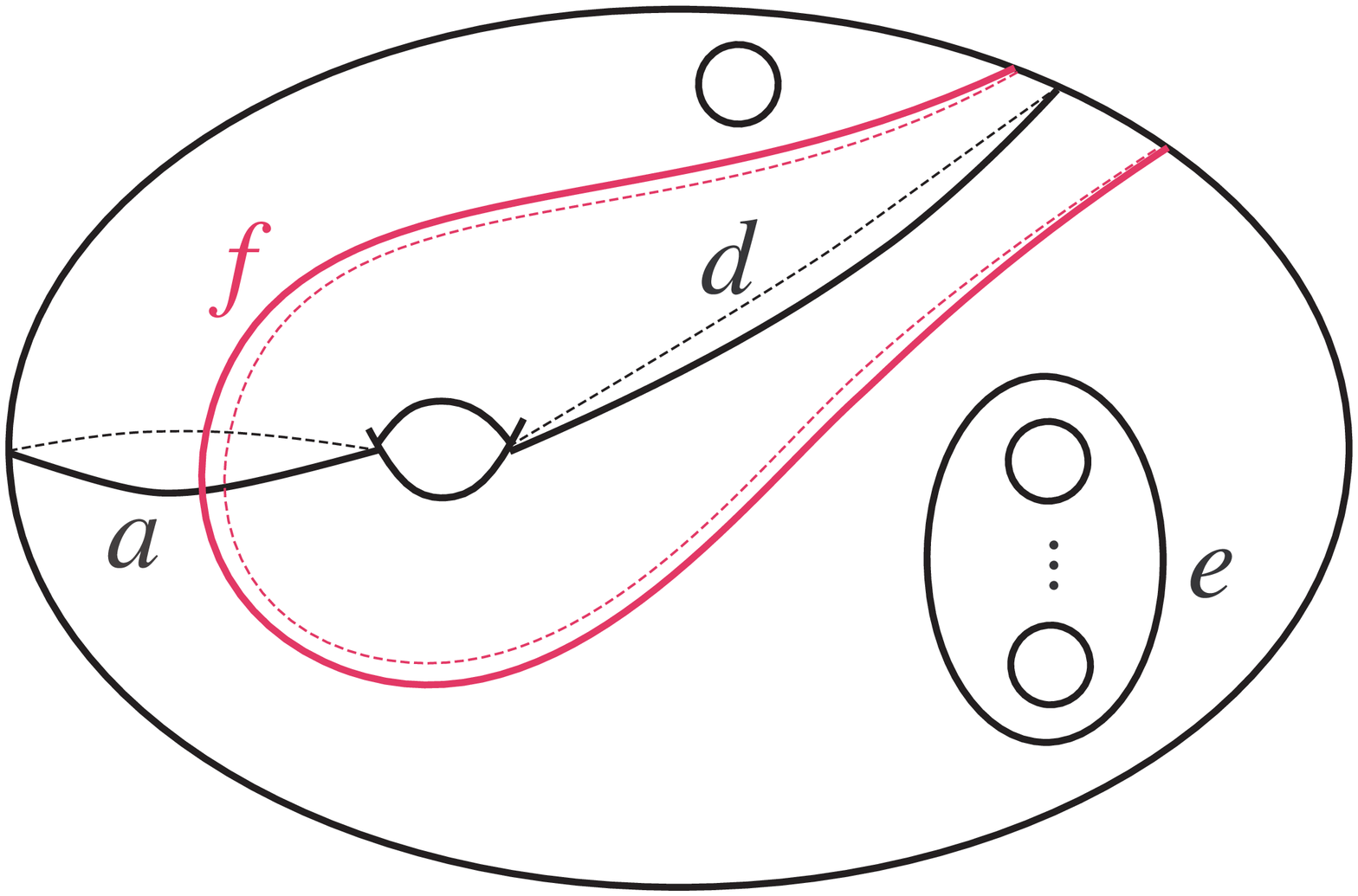}
		
		(i)   \hspace{5.6cm} (ii)  \vspace{0.3cm}
		
		\epsfxsize=2.2in \epsfbox{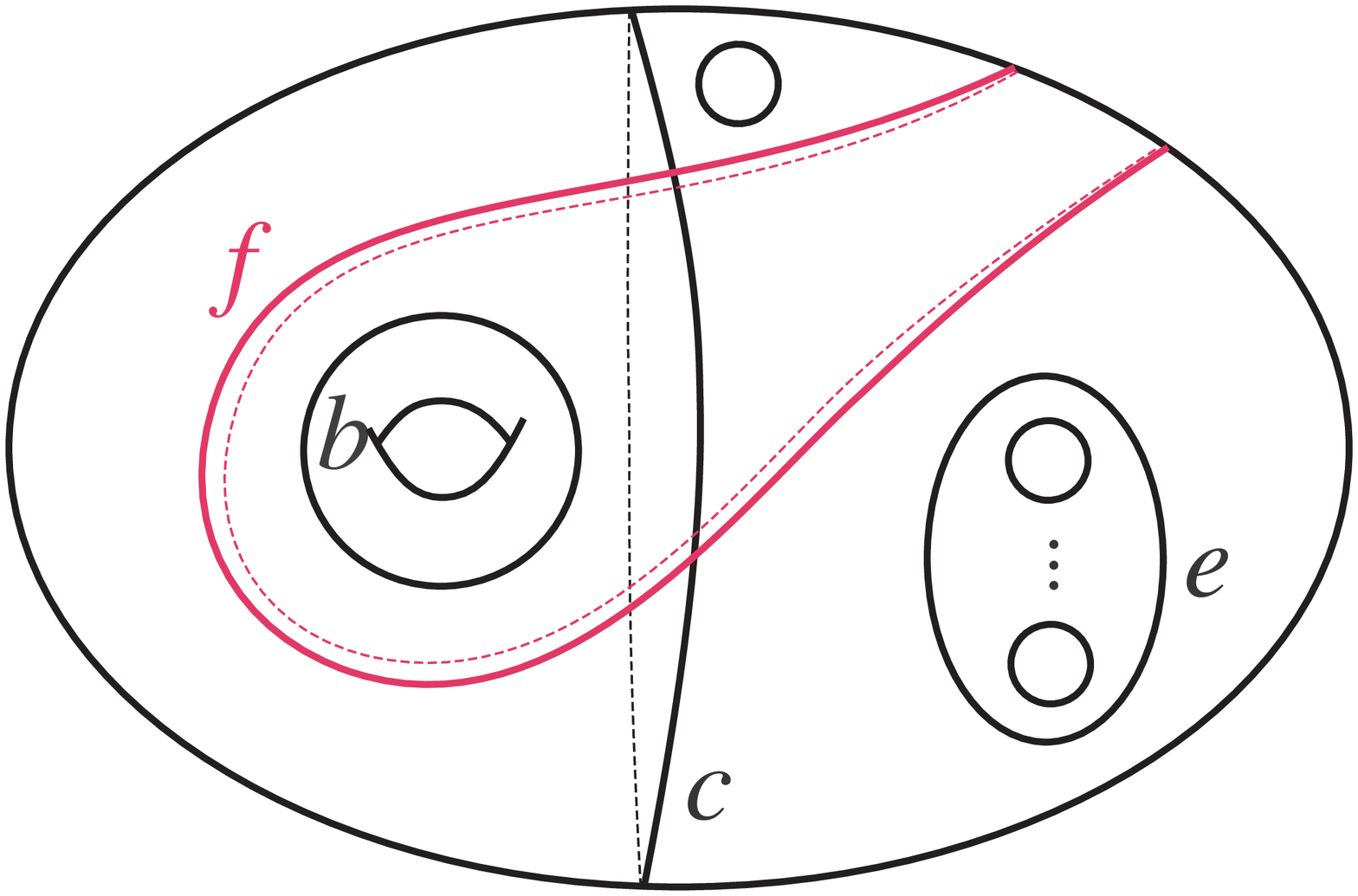} 
		\hspace{0.2cm}  \epsfxsize=2.2in \epsfbox{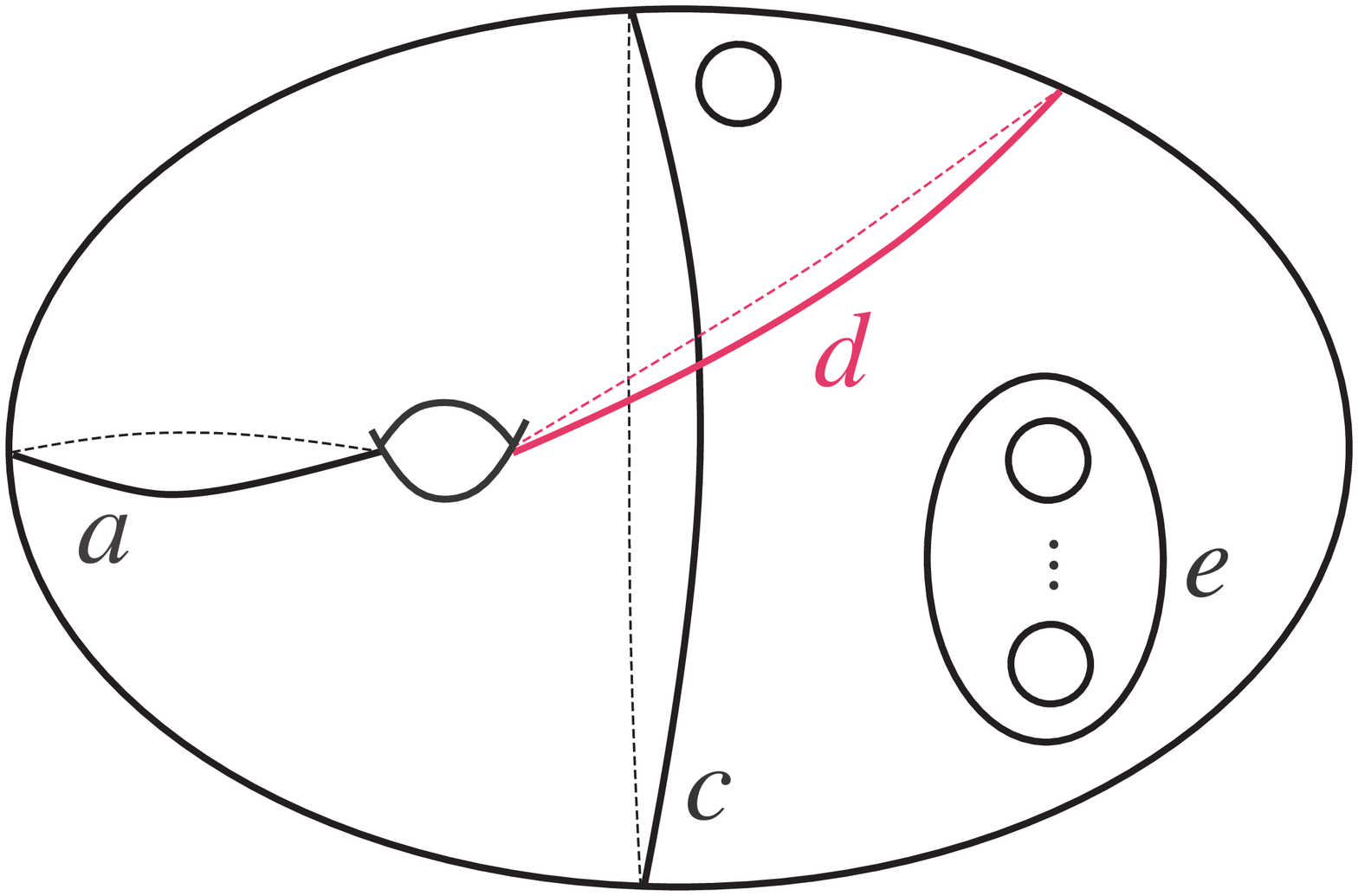}
		
		(iii)   \hspace{5.6cm} (iv)  \vspace{0.3cm}
		
		\caption{Intersection one} \label{fig-7s}
	\end{center} 
\end{figure}

\begin{lemma}
	\label{int-one} If $\alpha_1, \alpha_2$ are two vertices of 
	$\mathcal{C}(R)$ with $i(\alpha_1, \alpha_2) = 1$, then $i(\lambda(\alpha_1), \lambda(\alpha_2)) =1 $.\end{lemma}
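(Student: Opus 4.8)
The plan is to upgrade the inequality $i(\lambda(\alpha_1),\lambda(\alpha_2))\neq 0$ coming from Lemma~\ref{22} to an exact equality by exhibiting enough curves with controlled intersection pattern so that the image configuration is forced to realize geometric intersection exactly one. First I would fix minimally intersecting representatives $a,b$ of $\alpha_1,\alpha_2$ and build a standard neighborhood configuration: the curves $a$ and $b$ fill a once-punctured torus (or a four-holed sphere, depending on whether the curves are separating) inside $R$, and I would choose a pants decomposition $P$ containing $a$ together with curves $c_1,\dots,c_{n-1}$ arranged as in Figures~\ref{fig-5s}--\ref{fig-6s}, so that by Lemmas~\ref{adj-2} and \ref{nonadj-2} the map $\lambda$ sends $[P]$ to a pants decomposition $[P']$ preserving the entire adjacency graph of $P$. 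Since the adjacency graph of this particular $P$ determines, up to homeomorphism, how the pants are glued, a change-of-coordinates principle then gives a homeomorphism taking a representative of $[P]$ to a representative of $[P']$ and matching $\lambda([x])$ with the image of $[x]$ for $x\in P$; after composing with this homeomorphism I may assume $\lambda$ fixes $[P]$ pointwise and preserves adjacency, which pins down the local picture around $a$.

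Next I would analyze the image of $b$. We know $b$ meets only $a$ among the curves of $P$, and $\lambda([b])$ meets only $\lambda([a])=[a]$ among the curves of $[P']=[P]$; hence a representative of $\lambda([b])$ lives in the same subsurface $N$ (the once-punctured torus or four-holed sphere filled by $a$ and $b$, or rather the subsurface cut out by the curves of $P$ adjacent to $a$). Inside that small subsurface the curve graph is the Farey graph, and the vertices at distance one from $[a]$ in $N$ (i.e. geometric intersection one, resp.\ two) form a specific set. To rule out $i(\lambda([a]),\lambda([b]))\geq 2$ I would use additional test curves: take a curve $f$ with $i(a,f)=1$ and $i(b,f)=1$ but $i(f,x)=0$ for all other $x\in P$ — this exists because $a,b,f$ can be arranged as a ``chain'' in $N$ — and apply Lemma~\ref{22} twice to get $i(\lambda(a),\lambda(f))\neq 0$ and $i(\lambda(b),\lambda(f))\neq 0$, while $\lambda(f)$ still meets only $\lambda(a)$ (and possibly $\lambda(b)$) inside the subsurface. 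In the Farey graph of $N$, a vertex adjacent to both $[a]$ and $[b]$ exists only when $i([a],[b])$ is the minimal value; if $\lambda([a])$ and $\lambda([b])$ had geometric intersection $\geq 2$ in $N$, the existence of such a common neighbor, together with the further relations among $\lambda$ of a short chain $a,f,g,b,\dots$, would be contradicted. Thus the distance between $\lambda([a])$ and $\lambda([b])$ in the Farey graph of $N$ is exactly one, which in the once-punctured torus case means $i(\lambda(\alpha_1),\lambda(\alpha_2))=1$, and in the four-holed sphere case a parallel argument gives the minimal intersection there, which by the change-of-coordinates identification corresponds to geometric intersection one in $R$.

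I expect the main obstacle to be the last step: passing from ``$i\neq 0$ inside the small subsurface $N$'' to ``$i=1$'', since a priori $\lambda([b])$ could be any vertex of the Farey complex of $N$ other than $[a]$ and the finitely many others we have excluded. The key is to produce a \emph{finite rigid configuration} in $N$ in the sense of Aramayona--Leininger \cite{AL2} — a small set of curves in $N$ whose entire pairwise intersection-zero/nonzero pattern (recorded through edges, via the edge-preserving hypothesis and Lemma~\ref{22}) forces $\lambda$ restricted to it to be realized by a homeomorphism of $N$, hence intersection-number preserving on that set. Once $b$ is placed in such a rigid set the conclusion $i(\lambda(\alpha_1),\lambda(\alpha_2))=1$ is immediate. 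Organizing the test curves $b$, $z$, $z_i$, $p_i$, $x_i$ from Figures~\ref{fig-5s}--\ref{fig-7s} into such a rigid family, and checking that each required edge or non-edge is genuinely available, is the technical heart of the argument; I would carry this out case by case according to which of the eight configurations in Figure~\ref{fig-5s} the pair $\alpha_1,\alpha_2$ falls into, using Figure~\ref{fig-7s} for the final intersection-one verifications.
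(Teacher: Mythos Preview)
Your setup is right and matches the paper: you complete $a$ to the pants decomposition $P=\{a,c_1,\dots,c_{n-1}\}$, invoke Lemmas~\ref{adj-2} and \ref{nonadj-2} to conclude that the image of $[P]$ has the same adjacency graph, and hence that $c'=c_1'$ bounds a one-holed torus $T'$ with $a'$ nonseparating inside it and $\lambda([b])$ supported in $T'$.

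The gap is the final step. Your Farey-graph argument does not work, for two reasons. First, your chosen test curve $f$ lies entirely in $T$ (it is disjoint from $c_1$), so Lemma~\ref{22} only tells you $i(\lambda(a),\lambda(f))\neq 0$ and $i(\lambda(b),\lambda(f))\neq 0$; it does \emph{not} tell you these intersections equal $1$, i.e.\ that $\lambda(f)$ is a Farey neighbor of $\lambda(a)$ and $\lambda(b)$. Using Farey adjacency of the images to deduce intersection one is circular, since Farey adjacency \emph{is} intersection one. Second, even granting Farey adjacency, your assertion that a common Farey neighbor exists only when $i([a],[b])=1$ is false: $0/1$ and $2/1$ have intersection $2$ yet share the Farey neighbor $1/1$. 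The rigid-set fallback has the same defect: Aramayona--Leininger rigidity requires a simplicial (edge-preserving) map of the relevant curve graph, but inside $T'$ there are no $\mathcal{C}(R)$-edges at all, and you have no mechanism to certify Farey edges among the images.

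The paper's argument avoids this by using curves that \emph{cross} $c$ rather than curves confined to $T$. One picks $d$ (disjoint from $a$, intersecting $b$ once) and $f$ (disjoint from $b$ and $d$) so that both intersect $c$; after showing $i(\lambda(d),\lambda(c))\neq 0$ and $i(\lambda(f),\lambda(c))\neq 0$ by the usual pants-swap trick, one looks at the arcs $\gamma_1\subset d'$ and $\gamma_2\subset f'$ left inside $T'$. Now $\gamma_1$ is an essential arc in $T'$ disjoint from $a'$, and $\gamma_2$ is an essential arc in $T'$ disjoint from $\gamma_1$; since $b'\subset T'$ is disjoint from $\gamma_2\cup c'$, it lives in $T'$ cut along $\gamma_2$, which is an annulus, forcing $b'$ to be its core. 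The relation between $\gamma_1,\gamma_2$ and $a'$ then pins down $i(a',b')=1$. The essential idea you are missing is that information from \emph{outside} the torus, transported into $T'$ as properly embedded arcs, is what rigidifies $b'$; curves wholly inside $T$ cannot do this with the tools available.
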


\begin{proof} Let $a, b$ be representatives of $\alpha_1, \alpha_2$ respectively. We will complete $a, b$ to a curve configuration $\{a, b, c, d, e, f \}$ as shown in Figure \ref{fig-7s} (i). We can let $c_1=c, c_2=e$ and complete $\{a, c, e\}$ to a pants decomposition $P$ as in Lemma \ref{adj-2}, and using that adjacency and nonadjadjacency are preserved w.r.t. $P'$ by Lemma \ref{adj-2} and 
Lemma \ref{nonadj-2}, we can see that $\lambda([c])$ has a representative $c'$ which is a separating curve that separates the surface into two pieces and one of this is a torus $T$ with one boundary component and $\lambda([a])$ has a nonseparating represantative, say $a'$, in $T$. Let $b', d', e', f'$ be minimally intersecting represantatives of $\lambda([b]), \lambda([d])$ $\lambda([e])$ $\lambda([f])$ respectively such that all the curves 
$a', b', c', d', e', f'$ minimally intersect each other. By Lemma \ref{22} we know that $i([a'], [b']) \neq 0$, $i([b'], [d']) \neq 0$.

\begin{figure} 
	\begin{center}
		\epsfxsize=2.2in \epsfbox{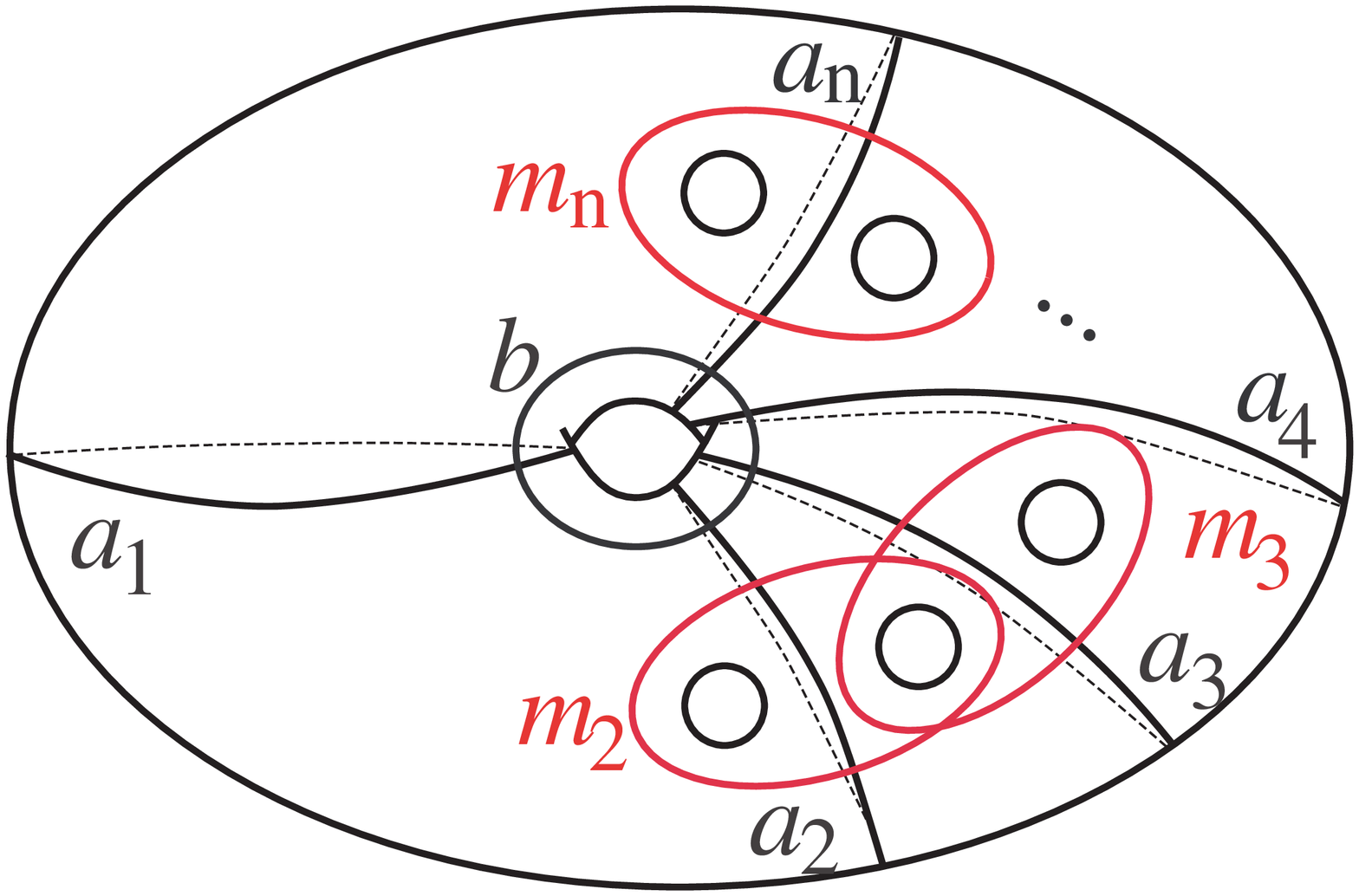} \hspace{0.2cm} 	\epsfxsize=2.2in \epsfbox{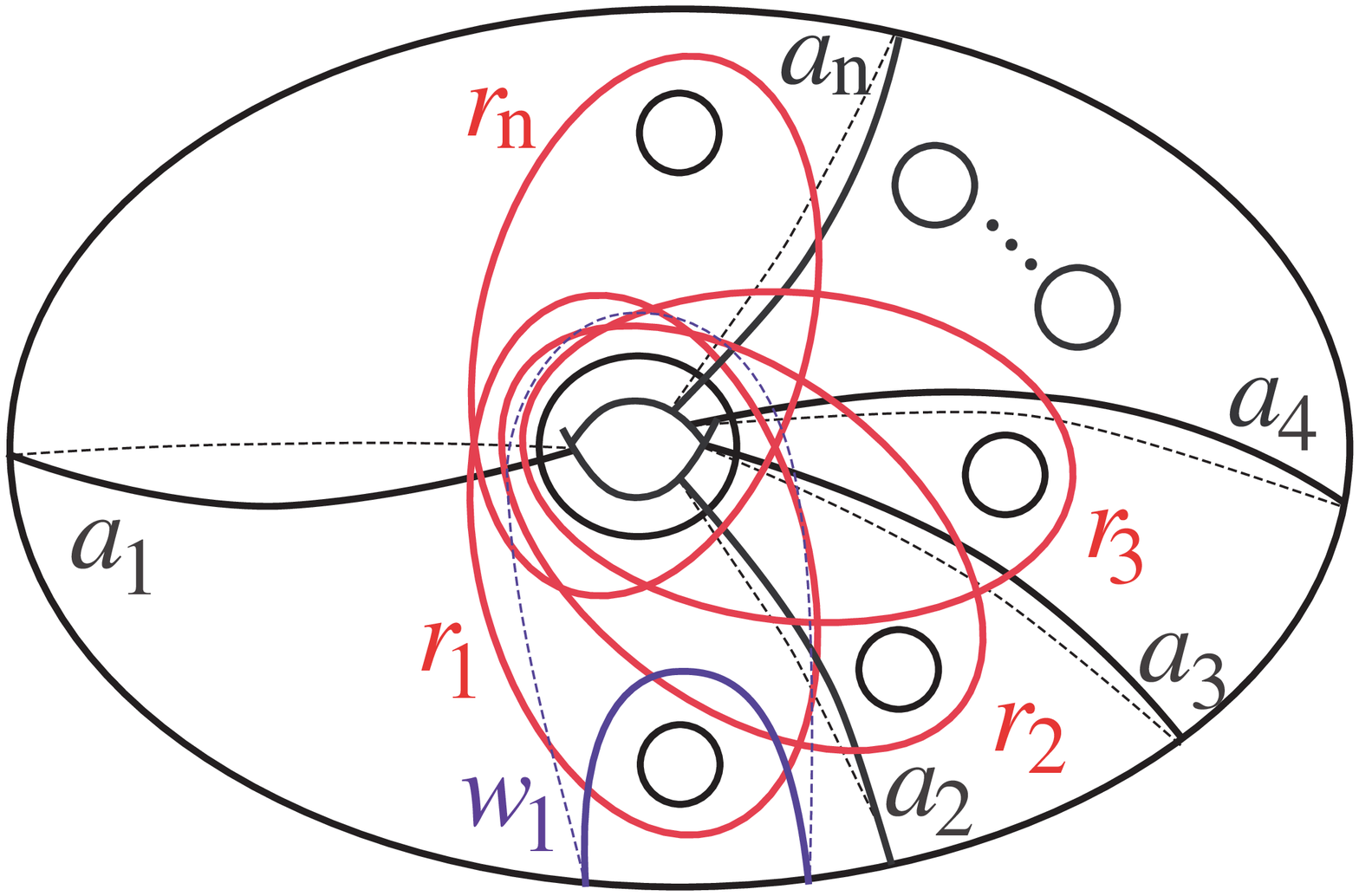}
		
		(i)   \hspace{5.6cm} (ii)  \vspace{0.3cm}
		
		\epsfxsize=2.2in \epsfbox{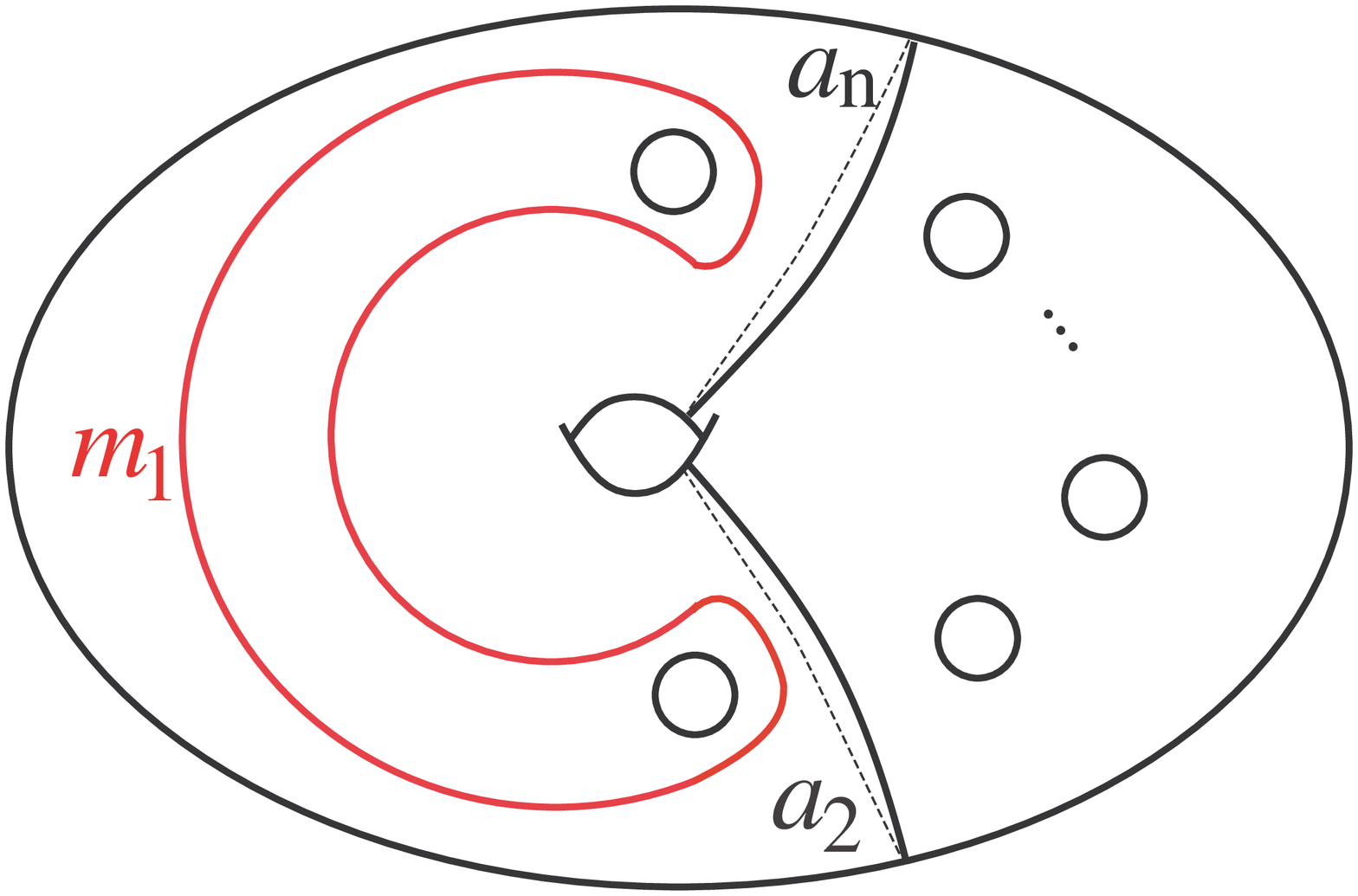} 
		\hspace{0.2cm} 
		\epsfxsize=2.2in \epsfbox{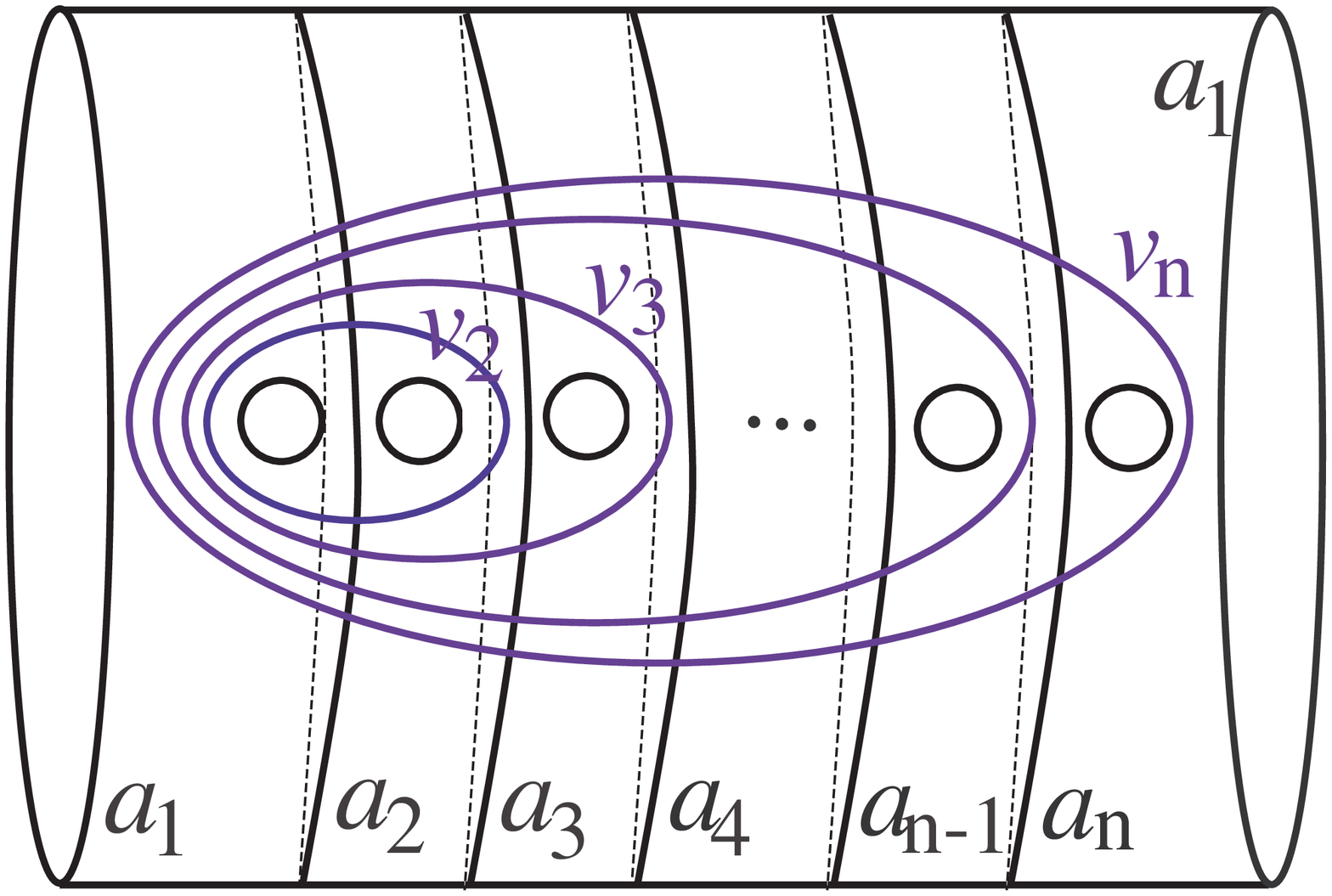}
		
		(iii)   \hspace{5.6cm} (iv) \vspace{0.3cm}
		\caption{Curves in $\mathcal{C}$} \label{fig-8s}
	\end{center} 
\end{figure}	

We will prove that $i([a'], [f']) \neq 0$, $i([c'], [f']) \neq 0$, $i([d'], [c']) \neq 0$. To see $i([a'], [f']) \neq 0$, let $U= (P \setminus \{c_1\}) \cup \{d\}$. Then $U$ is a pants decomposition on $R$ and $i(\lambda([f]), \lambda([x])) = 0$ for every $x \in U \setminus \{a\}$, see Figure \ref{fig-7s} (i). So, either $i(\lambda([f]), \lambda([a])) \neq 0$ or $\lambda([f]) = \lambda([a])$. By Lemma \ref{22} $i(\lambda([a]), \lambda([b])) \neq 0$. Since $i(([f]), [b]) =0$, we have $i(\lambda([f]), \lambda([b])) = 0$. So, $\lambda([f])$ cannot be equal to $\lambda([a])$. Hence, $i(\lambda([f]), \lambda([a])) \neq 0$. 

To see $i([c'], [f']) \neq 0$, let $V=(P \setminus \{a\}) \cup \{b\}$. Then $V$ is a pants decomposition on $R$ and $i(\lambda([f]), \lambda([x])) = 0$ for every $x \in V \setminus \{c\}$, see Figure \ref{fig-7s} (iii). So, either $i(\lambda([f]), \lambda([c])) \neq 0$ or $\lambda([f]) = \lambda([c])$. By
the above paragraph we know that $i(\lambda([a]), \lambda([f])) \neq 0$. Since $i(([a]), [c]) =0$, we have $i(\lambda([a]), \lambda([c])) = 0$. So, $\lambda([f])$ cannot be equal to $\lambda([c])$. Hence, $i(\lambda([f]), \lambda([c])) \neq 0$.

To see $i([c'], [d']) \neq 0$ we observe that $i(\lambda([d]), \lambda([x])) = 0$ for every $x \in P \setminus \{c\}$, see Figure \ref{fig-7s} (iv). So, either $i(\lambda([d]), \lambda([c])) \neq 0$ or $\lambda([d]) = \lambda([c])$. By
Lemma \ref{22} we know that $i(\lambda([b]), \lambda([d])) \neq 0$. Since $i(([b]), [c]) =0$, we have $i(\lambda([b]), \lambda([c])) = 0$. So, $\lambda([d])$ cannot be equal to $\lambda([c])$. Hence, $i(\lambda([d]), \lambda([c])) \neq 0$.

The above intersection information imply that there is an arc of $d'$, say $\gamma_1$, in $T$ that starts and ends at $c'$ (the boundary of $T$) such that $\gamma_1$ is disjoint from $a'$. Also there is an arc of $f'$, say $\gamma_2$, in $T$ that is disjoint from $\gamma_1$ and 
starts and ends at $c'$. Then, since $b'$ is disjoint from $\gamma_2 \cup c'$, and $b'$ intersects $a'$ by Lemma \ref{22}, we see that $i(a', b')=1$.\end{proof}\medskip

If $f: R \rightarrow R$ is a homeomorphism, then we will use the same notation for $f$ and $[f]$.
Let $\mathcal{C} = \{a_1, a_2, \cdots, a_n, b, m_1, m_2, \cdots, m_n, r_1, r_2, \cdots, r_{n}, v_2, v_3, \cdots, v_n\}$ where the curves are as shown in Figure \ref{fig-8s}.  

\begin{lemma} \label{curves} There exists a 
	homeomorphism $h: R \rightarrow R$ such that $h([x]) = \lambda([x])$ $\forall \ x \in \mathcal{C}$.\end{lemma}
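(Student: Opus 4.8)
The plan is to build the homeomorphism $h$ directly by matching the configuration $\mathcal{C}$ in the image with a standard configuration, using the topological rigidity of $\mathcal{C}$ as a finite subset of the curve graph together with the intersection-number control established in Lemmas \ref{22}, \ref{int-one}, \ref{intersection}, \ref{adj-2}, and \ref{nonadj-2}. First I would record exactly which pairs in $\mathcal{C}$ are joined by an edge (i.e.\ have intersection zero) and which have intersection one; this is a finite combinatorial table read off from Figure \ref{fig-8s}. Since $\lambda$ is edge preserving, any pants decomposition inside $\mathcal{C}$ maps to a pants decomposition (Lemma \ref{pd-inj-1}); pick the subcollection $P_0 = \{a_1,\dots,a_n\} \cup \{b\}$ minus whatever is needed to make it a pants decomposition and apply Lemma \ref{adj-2} and Lemma \ref{nonadj-2} to see that $\lambda$ sends $[P_0]$ to a pants decomposition $[P_0']$ with the \emph{same} adjacency graph. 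The adjacency graph of a pants decomposition on a genus-one surface with $n$ boundary components determines the topological type of the decomposition (which curves are separating, how the pants are glued), so $P_0'$ is, up to a homeomorphism of $R$, the ``same'' pants decomposition as $P_0$. Post-composing $\lambda$ with that homeomorphism, I may assume $\lambda$ fixes $[P_0]$ setwise and in fact fixes each element of $[P_0]$ (the adjacency graph has no nontrivial automorphism fixing the separating/nonseparating decoration once $n$ is large enough; the small-$n$ cases are checked by hand).

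Next I would pin down the images of the remaining curves $m_i, r_i, v_i$ one at a time. Each of these curves has intersection zero or one with the curves of $P_0$, and by Lemmas \ref{22} and \ref{int-one} these intersection patterns are preserved exactly. The key point is that a curve's isotopy class is determined by its intersection pattern with a fixed pants decomposition together with a bounded amount of extra data (a Dehn twist coordinate), and the extra ``twisting'' ambiguity is killed by also recording intersections with one or two more curves from $\mathcal{C}$ that have already been located — this is precisely the style of argument used in Lemma \ref{int-one}, where the arcs $\gamma_1,\gamma_2$ in the torus piece $T$ are used to force $i(a',b')=1$. Concretely: $\lambda([a])$ is forced to be a nonseparating curve in the torus piece cut off by $\lambda([c])$, and then $\lambda([b])$, meeting it once and meeting nothing else in the relevant pants decomposition, is forced to be the standard dual curve; iterating this bead-by-bead along the chain of pants reconstructs the image of every curve in $\mathcal{C}$ as the image under a single homeomorphism $h$. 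Since $\mathcal{C}$ is a ``filling'' configuration — it contains a pants decomposition plus enough transverse curves that no nontrivial mapping class fixes all of them — the homeomorphism $h$ realizing $\lambda|_{\mathcal{C}}$ is unique up to isotopy, which is the uniqueness needed both here and for Theorem \ref{A1}.

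The main obstacle I expect is the case analysis for small $n$, especially $n=3$: with so few pants the adjacency graph of $P_0$ may admit an automorphism, or the configuration $\mathcal{C}$ may not obviously be rigid, so one has to choose the auxiliary curves in Figure \ref{fig-8s} carefully and verify by an explicit (if tedious) check that the intersection data with $\mathcal{C}$ determines each $\lambda([x])$ uniquely — this is where the $g=1,n=2$ counterexample mentioned in the introduction shows the argument is genuinely tight. A secondary technical point is justifying ``the intersection pattern with a pants decomposition plus finitely many transverse curves determines a curve'': I would handle this by cutting $R$ along $\lambda([P_0])$, analyzing the arcs of the image curve in each pair of pants (there are only finitely many isotopy classes of essential arc systems in a pair of pants), and using the already-established intersection numbers with the auxiliary curves to rule out all but one gluing pattern, exactly as in the endgame of Lemma \ref{int-one}. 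Everything else — the existence of the Figure \ref{fig-8s} configuration, the edge/intersection bookkeeping — is routine given the lemmas already proved.
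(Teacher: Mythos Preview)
Your overall strategy---fix a base configuration via a homeomorphism, then pin down the remaining curves one by one using preserved intersection data---is the same as the paper's. But there is a genuine gap at the step where you locate the curves $r_i$.

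You claim that ``the extra `twisting' ambiguity is killed by also recording intersections with one or two more curves from $\mathcal{C}$ that have already been located.'' For $r_1$ this is false. There is an orientation-reversing involution $\phi$ of $R$ that fixes each of $[a_1],\dots,[a_n],[b],[m_1],\dots,[m_n]$ (hence every curve you have placed so far) but exchanges $[r_1]$ with the class $[w_1]$ of a curve \emph{not} in $\mathcal{C}$. Consequently the intersection numbers of $r_1$ with all previously located curves in $\mathcal{C}$ are identical to those of $w_1$, and your uniqueness argument cannot decide between $h([r_1])$ and $h([w_1])$ as the value of $\lambda([r_1])$. The paper confronts this head-on: it first proves $\lambda([r_1])\neq\lambda([w_1])$ (using an auxiliary curve $y$ and Lemma~\ref{intersection}, so that the image really is one of the two candidates), and then \emph{replaces $\lambda$ by $\lambda\circ\phi_*$ if necessary} to force $h([r_1])=\lambda([r_1])$. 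Only after this normalization do the remaining $r_i$ become uniquely determined (each $r_{j+1}$ is characterized using $w_1$ or the previously placed $r_j$). Your proposal has no analogue of this step.

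A secondary point: the paper does not begin with a pants decomposition and Lemmas~\ref{adj-2}/\ref{nonadj-2} (which are proved only for one specific decomposition $P=\{a,c_1,\dots,c_{n-1}\}$, not for an arbitrary one). Instead it starts from the chain $\{a_1,\dots,a_n,b\}$: since each $a_i$ meets $b$ once and the $a_i$ are pairwise disjoint, Lemma~\ref{int-one} forces a regular neighborhood of $a_1'\cup\cdots\cup a_n'\cup b'$ to be a torus with $n$ boundary components, i.e.\ all of $R$, and this immediately yields the initial homeomorphism $h$. Your plan to extract a pants decomposition from $\{a_1,\dots,a_n,b\}$ and invoke adjacency preservation would need those lemmas re-proved for that decomposition, which is extra work the paper avoids.
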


\begin{proof} We will consider all the curves in  $\mathcal{C}$ 
	as shown in Figure \ref{fig-8s}. Let $a'_i  \in \lambda([a_i]), b' \in \lambda([b]), m'_i  \in \lambda([m_i]), r_i' \in \lambda([r_i]), v'_j  \in \lambda([v_j])$ where 
	$i =1, 2, \cdots, n$, $j = 2, 3, \cdots, n$ be minimally intersecting representatives.
	
By using Lemma \ref{int-one} and that $\lambda$ is edge preserving we see that a regular neighborhood of $a'_1 \cup a'_2 \cup \cdots \cup a'_n \cup b$ is a torus with $n$ boundary components as shown in Figure \ref{fig-9s}. So, there exists a homeomorphism $h$ such that $h([x]) = \lambda([x])$ for all $x \in \{a_1, a_2, \cdots, a_n, b\}$. This implies that if two nonseparating curves $x, y$ and a boundary component of $R$ bound a pair of pants then $\lambda([x]), \lambda([y])$ have representatives $x', y'$ such that $x', y'$ and a boundary component of $R$ bound a pair of pants on $R$. 

	\begin{figure} 
	\begin{center}
		\epsfxsize=2.2in \epsfbox{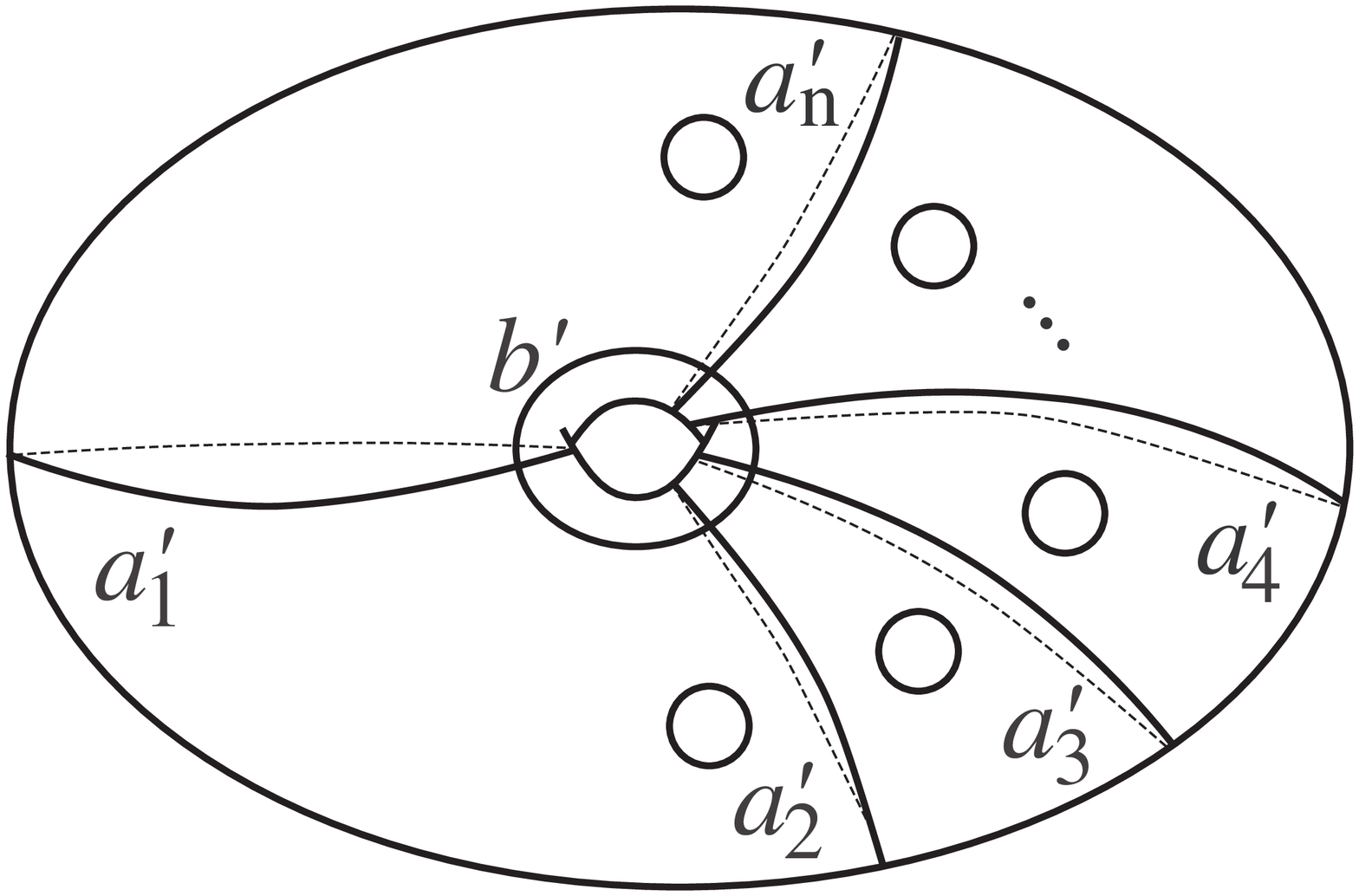}  
		\caption{Curves} \label{fig-9s}
	\end{center} 
\end{figure}

We will now show that $h([m_i]) = \lambda([m_i])$ for all $i= 1, 2, \cdots, n$. The curve $m_1$ is the unique nontrivial curve up to isotopy that is disjoint from all the curves in
$\{a_2, a_3, \cdots, a_n, b\}$, since we know that $h([x]) = \lambda([x])$ for all these curves and $\lambda$ is edge preserving, we have $h([m_1]) = \lambda([m_1])$. The curve $m_2$ is the unique nontrivial curve up to isotopy that is disjoint from all the curves in $\{a_3, a_4, \cdots, a_n, a_1, b\}$, since we know that $h([x]) = \lambda([x])$ for all these curves and $\lambda$ is edge preserving, we have $h([m_2]) = \lambda([m_2])$. Similarly, we have $h([m_i]) = \lambda([m_i])$ for all $i=3, 4, \cdots, n$.

The curve $v_2=m_2$, so $h([v_2]) = \lambda([v_2])$. The curve $v_3$ is the unique nontrivial curve up to isotopy that is disjoint from all the curves in
$\{a_4, a_5, \cdots, a_n, a_1, b, m_2, m_3\}$. Since we know that $h([x]) = \lambda([x])$ for all these curves and $\lambda$ is edge preserving, we have $h([v_3]) = \lambda([v_3])$. The curve $v_4$ is the unique nontrivial curve up to isotopy that is disjoint from all the curves in $\{a_5, a_6, \cdots, a_n, a_1, b, m_2, m_3, m_4\}$. Since we know that $h([x]) = \lambda([x])$ for all these curves and $\lambda$ is edge preserving, we have $h([v_4]) = \lambda([v_4])$. Similarly, we have $h([v_i]) = \lambda([v_i])$ for all $i=5, 6, \cdots, n$.

Consider the curve $w_1$ as shown in the Figure \ref{fig-8s} (ii). There exists a
homeomorphism $\phi : R \rightarrow R$ of order two such that the map $\phi_{*}$ induced by $\phi$ on $\mathcal{C}(R)$ sends the isotopy class of each curve in $\{a_1, a_2, \cdots, a_n, m_1, m_2, \cdots, m_n\}$ to itself and switches $[r_1]$ and $[w_1]$. We can see that $\lambda([r_1]) \neq \lambda([w_1])$ as follows: Consider the curve $y$ we had in Lemma \ref{intersection}. We will first prove that $i(\lambda([y]), \lambda([w_1])) \neq 0$.  
We complete $y$ to a pants decomposition $P$ on $R$ such that $i([w_1], [x]) = 0$ for every $x \in P \setminus \{y\}$, see Figure \ref{fig-10s} (i). Then we will have $i(\lambda([w_1]), \lambda([x])) = 0$ for every $x \in P \setminus \{y\}$. So, either $i(\lambda([w_1]), \lambda([y])) \neq 0$ or $\lambda([w_1]) = \lambda([y])$. By Lemma \ref{intersection} we know that $i(\lambda([y]), \lambda([c_{n-1}])) \neq 0$, see Figure \ref{fig-10s} (ii). We also see that $i(\lambda([w_1]), \lambda([c_{n-1}])) = 0$. So, 
$\lambda([w_1]) \neq \lambda([y])$. Hence, 
$i(\lambda([y]), \lambda([w_1])) \neq 0$. 
Since $i(\lambda([y]), \lambda([r_1])) = 0$ and $i(\lambda([y]), \lambda([w_1])) \neq 0$, we see that $\lambda([r_1]) \neq \lambda([w_1])$.
 
 \begin{figure} 
 	\begin{center}
 		\epsfxsize=2.2in \epsfbox{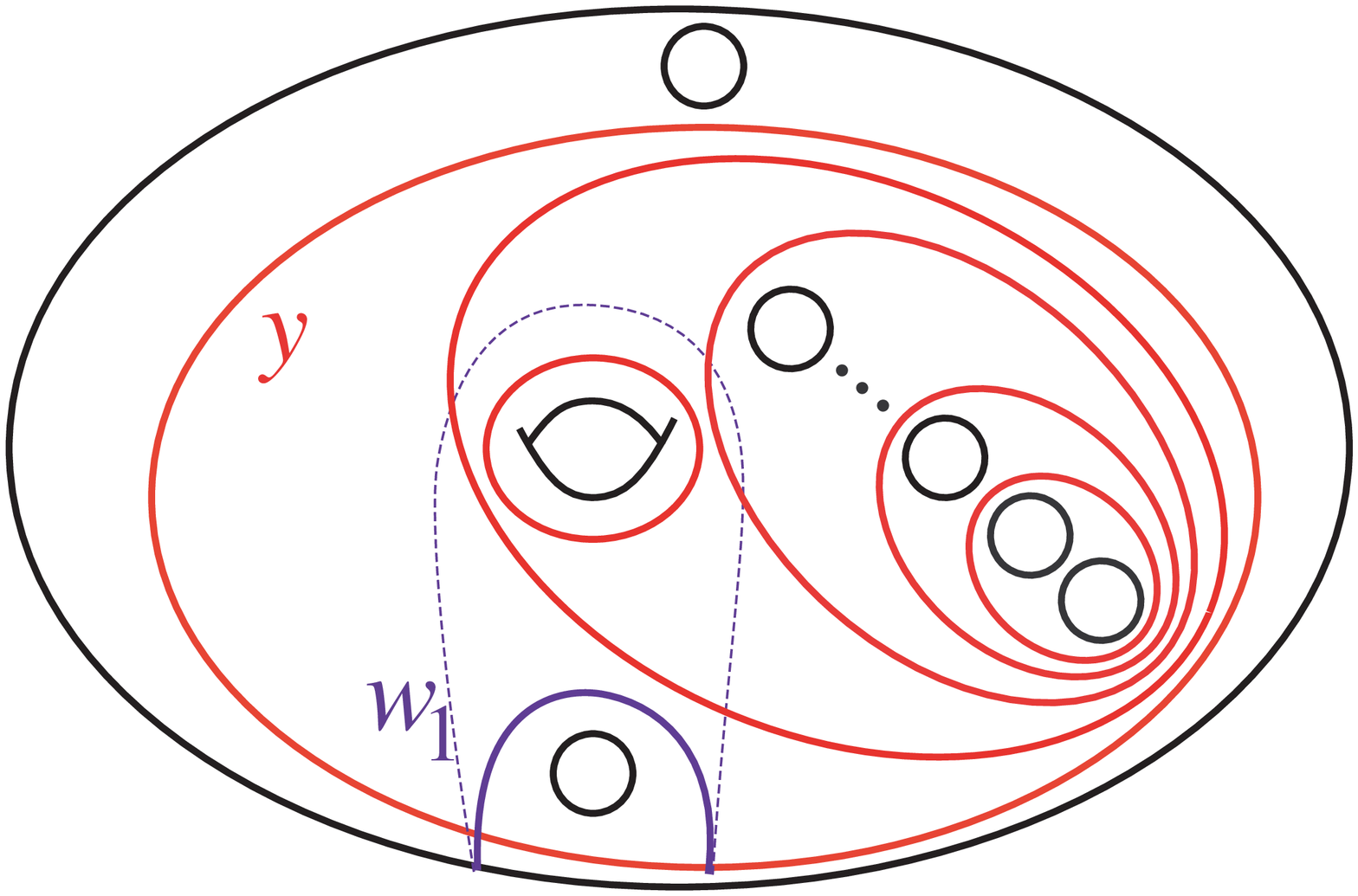} \hspace{0.2cm} 	\epsfxsize=2.2in \epsfbox{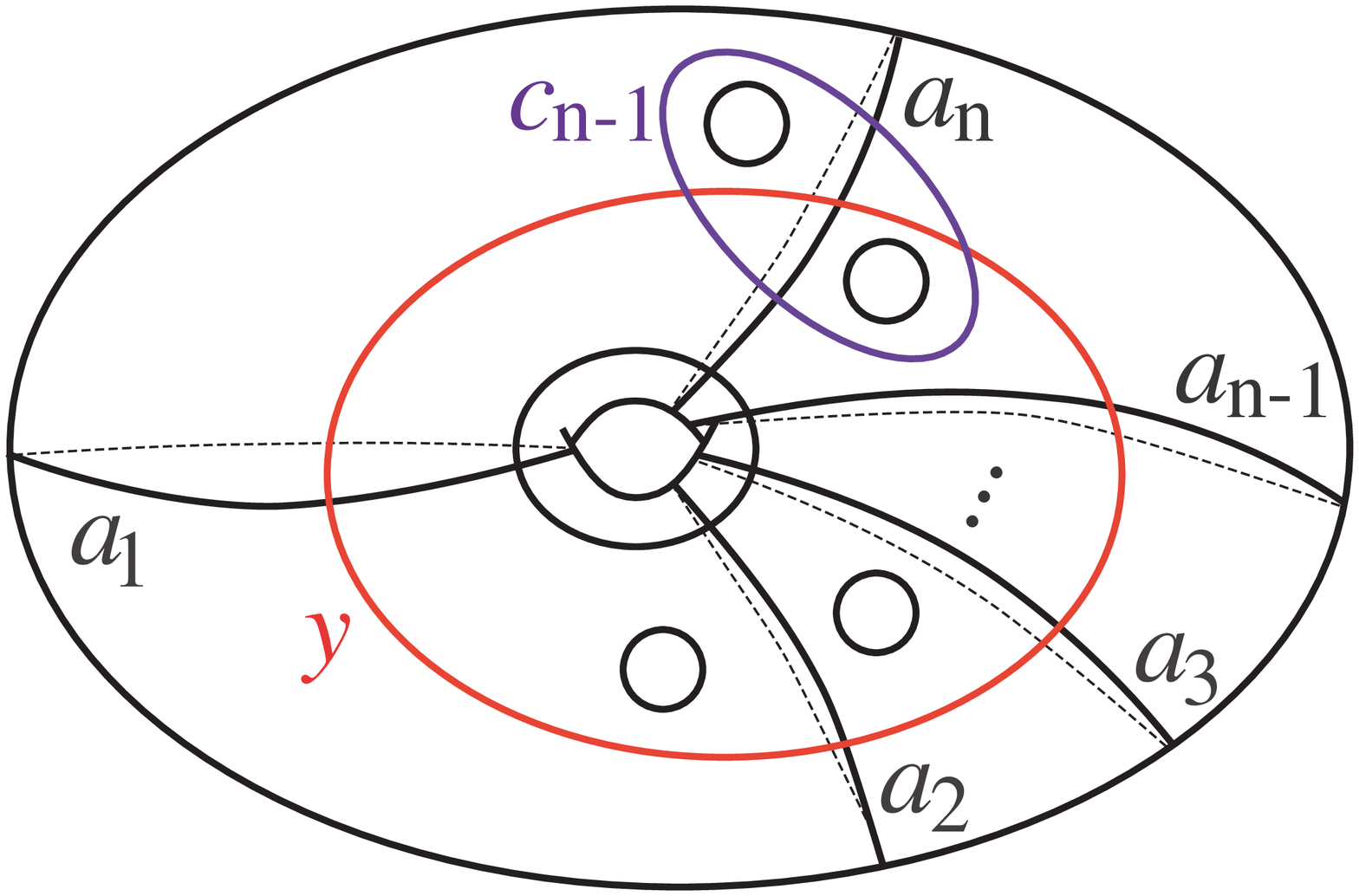}
 		
 		(i)   \hspace{5.6cm} (ii)  \vspace{0.3cm}
 		\caption{Curves } \label{fig-10s}
 	\end{center} 
 \end{figure}
 
There are only two nontrivial curves, namely $r_1$ and $w_1$, up to isotopy that are disjoint from each of $m_3, m_4, \cdots, m_n$, bounds a pair of pants with $b$ and a boundary component of $R$, 
and intersects each of $a_1, a_2, \cdots, a_{n}$ once. Since we know that $h([x]) = \lambda([x])$ for all these curves, $\lambda$ preserves these properties by Lemma \ref{int-one}, and $\lambda([r_1]) \neq \lambda([w_1])$, by replacing $\lambda$ with $\lambda \circ \phi_{*}$ if necessary, we can assume that we have $h([r_1]) = \lambda([r_1])$ and 
$h([w_1]) = \lambda([w_1])$. To get the proof of the lemma, it is enough to prove the result for
this $\lambda$. The curve $r_2$ is the unique nontrivial curve up to isotopy that is disjoint from each of $m_4, m_5, \cdots, m_n, m_1, w_1$, bounds a pair of pants with $b$ and a boundary component of $R$ 
and intersects each of $a_1, a_2, \cdots, a_{n}$ once.
Since we know that $h([x]) = \lambda([x])$ for all these curves and $\lambda$ preserves these properties, we see that $h([r_2]) = \lambda([r_2])$. Similarly, we get  $h([r_i]) = \lambda([r_i])$ $ \forall i= 3, 4, \cdots, n$.
Hence, $h([x]) = \lambda([x])$ $\forall \ x \in \mathcal{C}$.\end{proof}\\

\begin{figure} 
	\begin{center}	
		\epsfxsize=2.2in \epsfbox{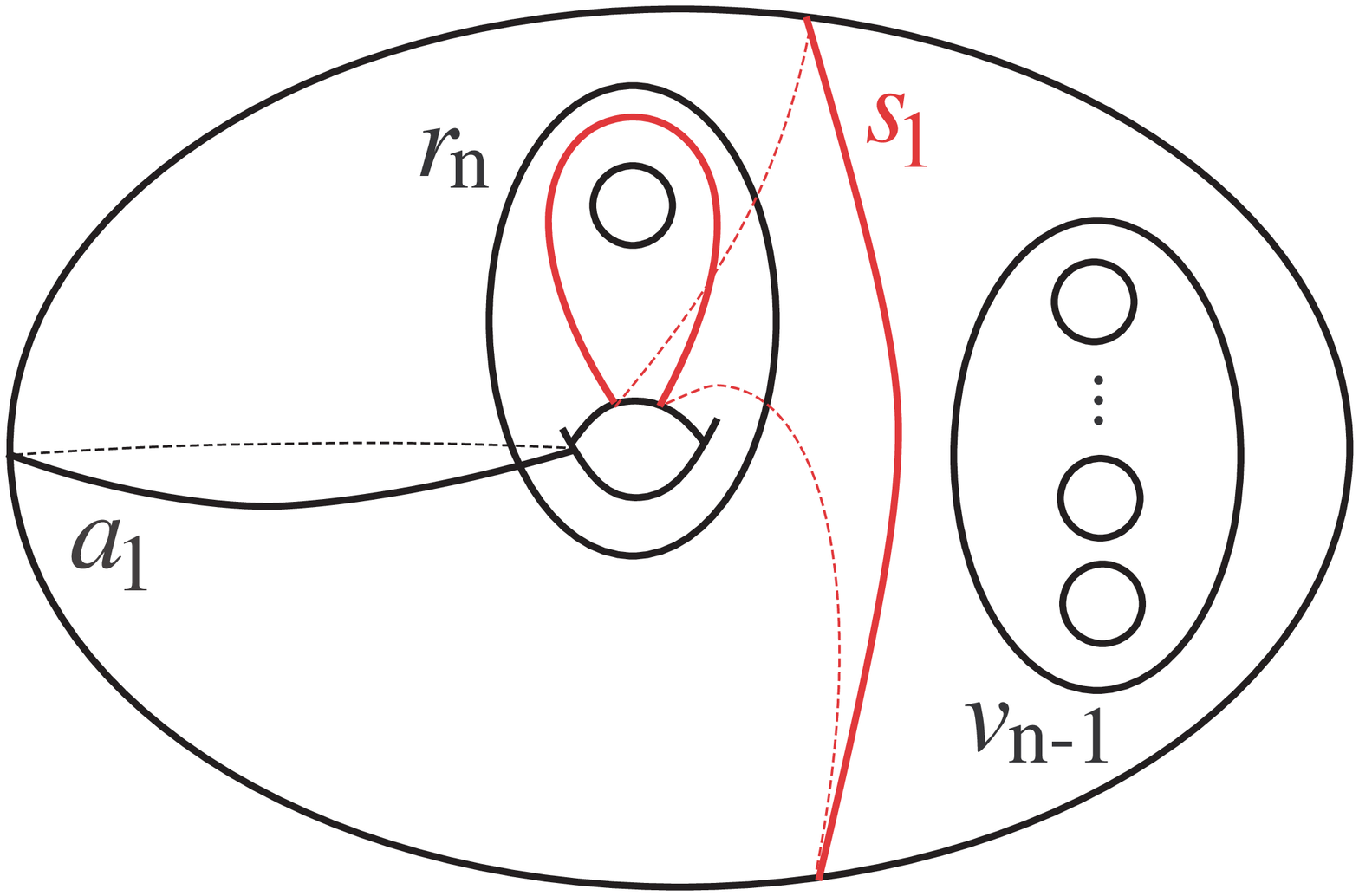} \hspace{0.2cm} 
		\epsfxsize=2.2in \epsfbox{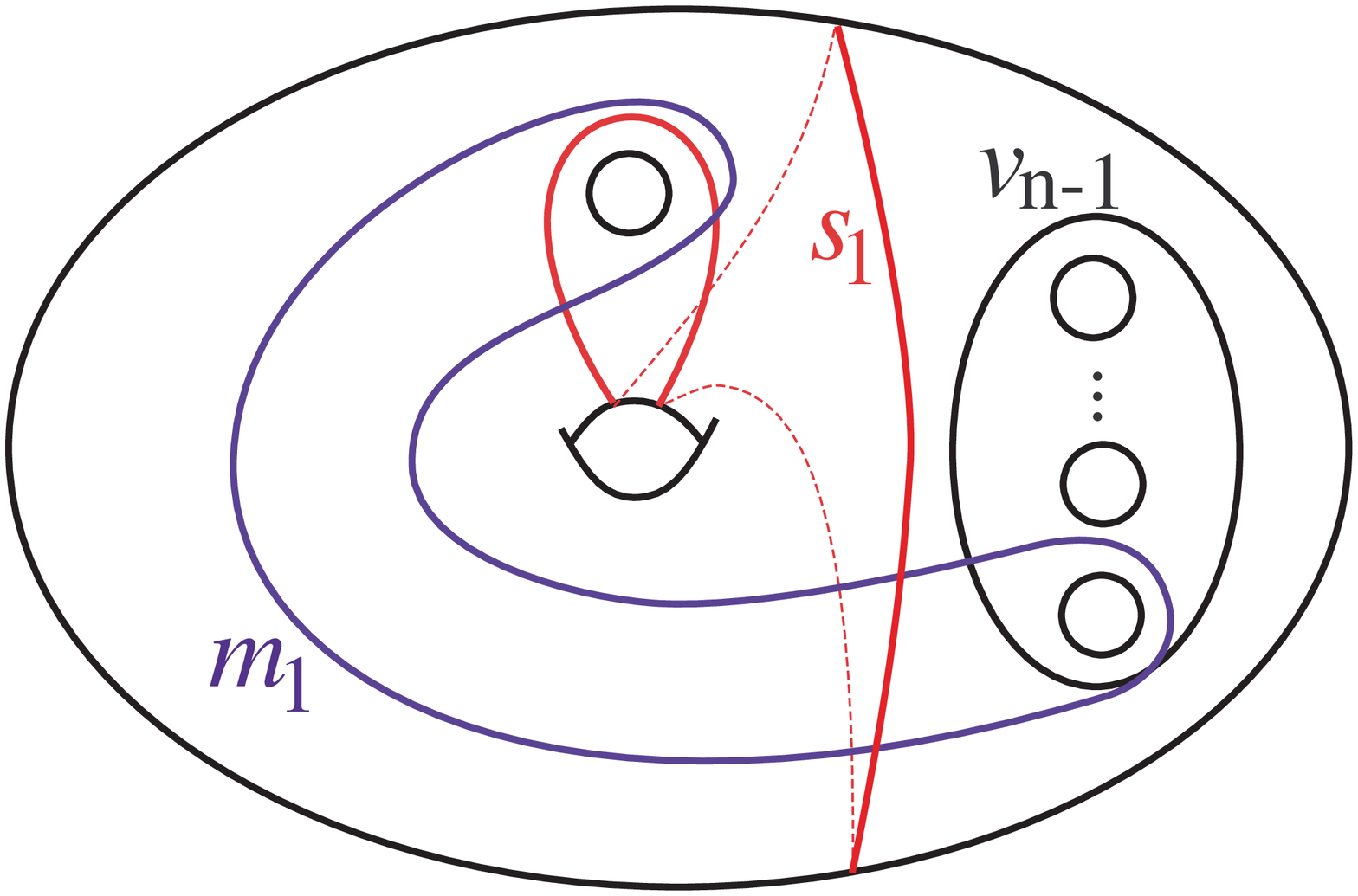}
		
		\vspace{0.3cm}
		
		(i)   \hspace{5.6cm} (ii)  \vspace{0.3cm}
		
		\epsfxsize=2.2in \epsfbox{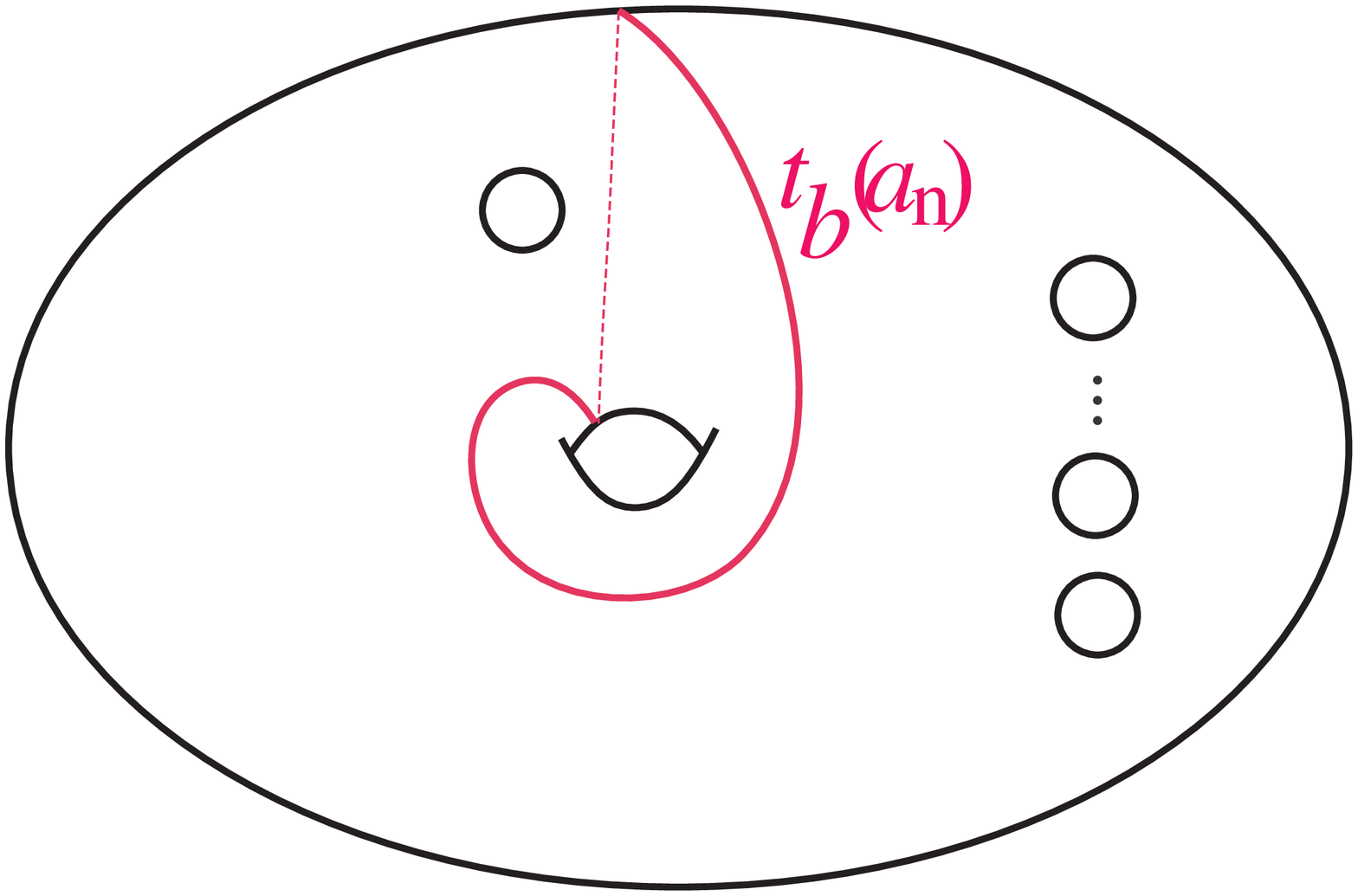} \hspace{0.2cm} 
		\epsfxsize=2.2in \epsfbox{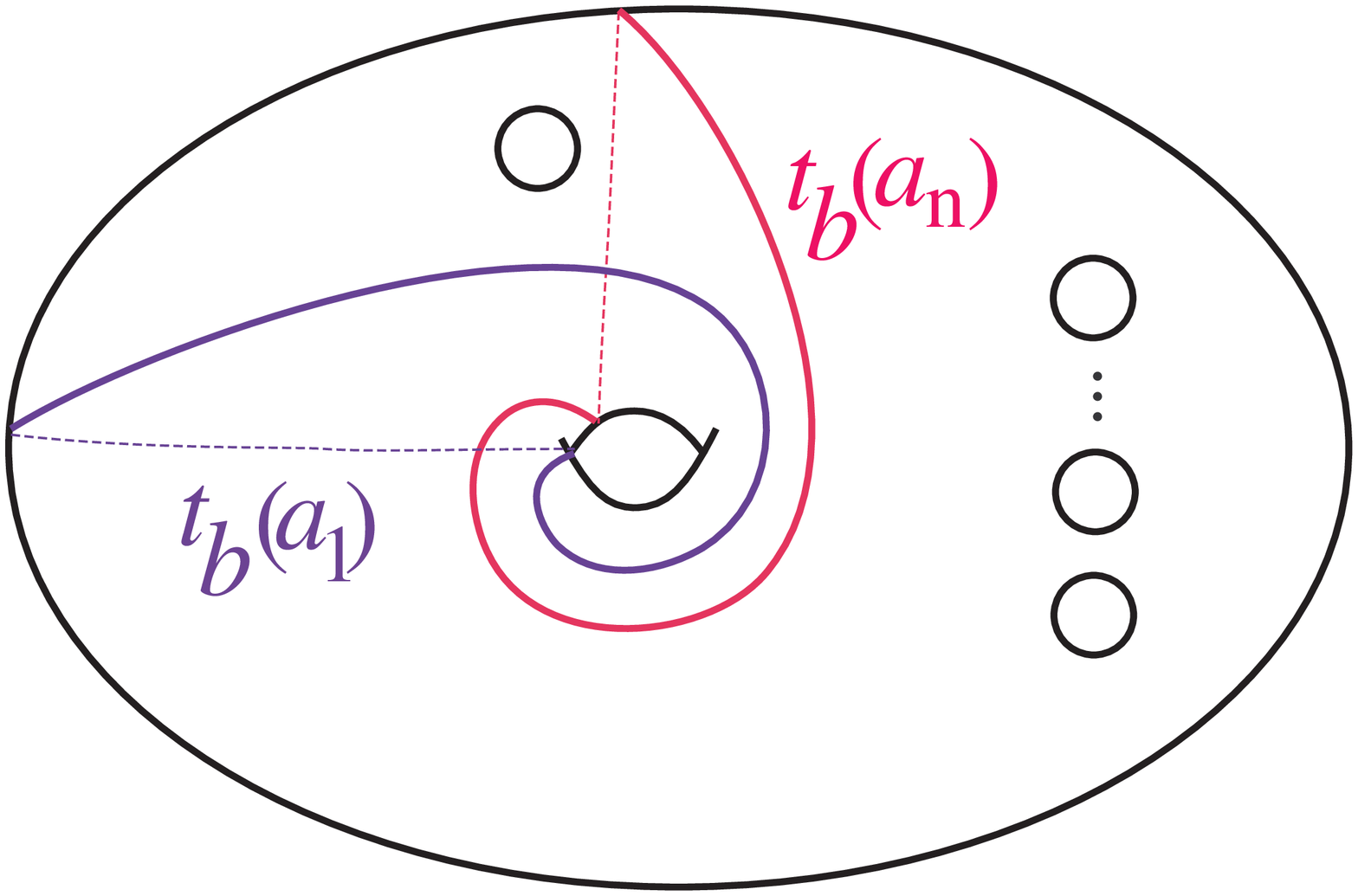}
		
		\vspace{0.3cm}
		
		(iii)   \hspace{5.6cm} (iv) \vspace{0.3cm}   
		
		\epsfxsize=2.2in \epsfbox{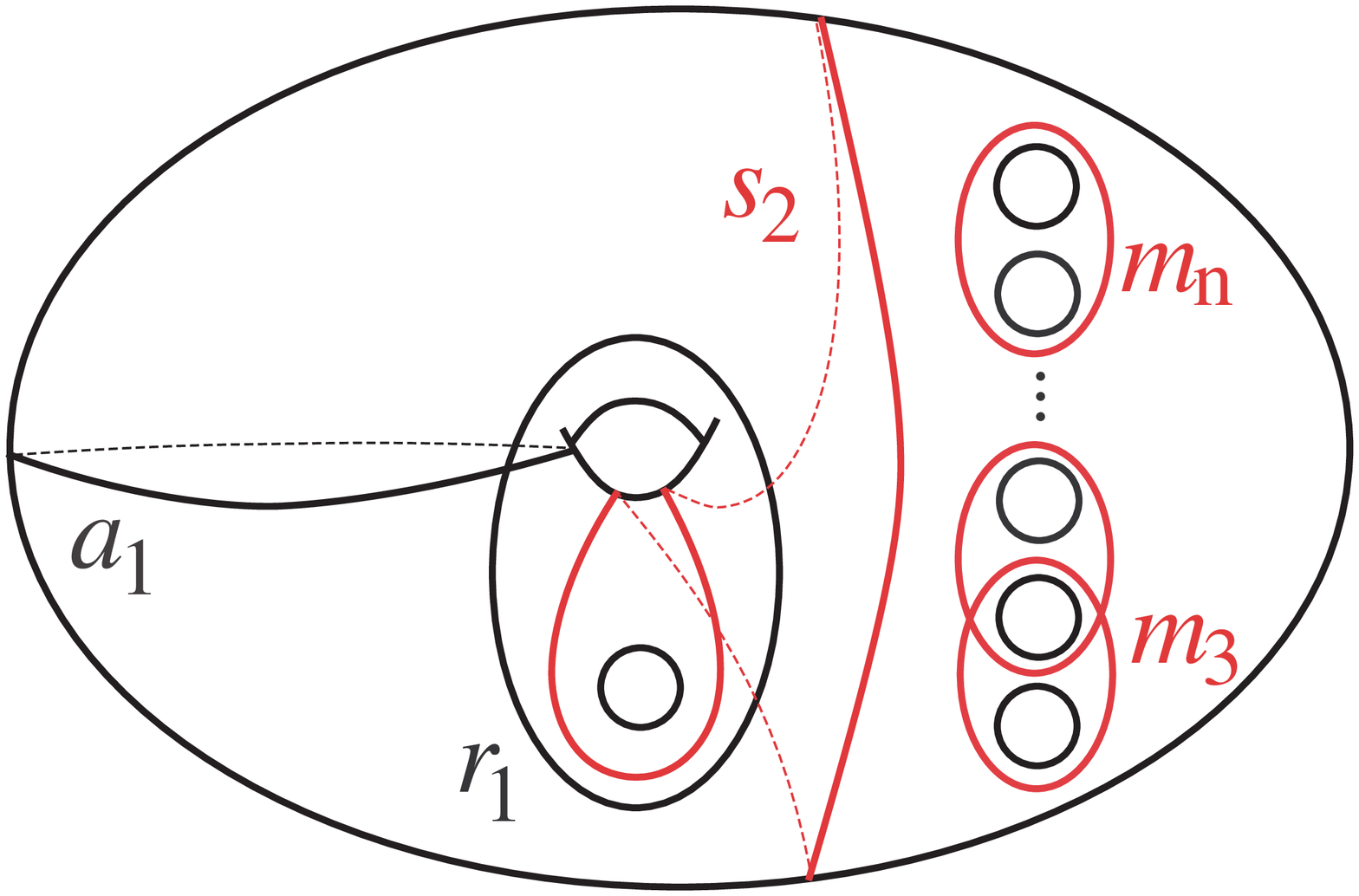} \hspace{0.2cm}  \epsfxsize=2.2in \epsfbox{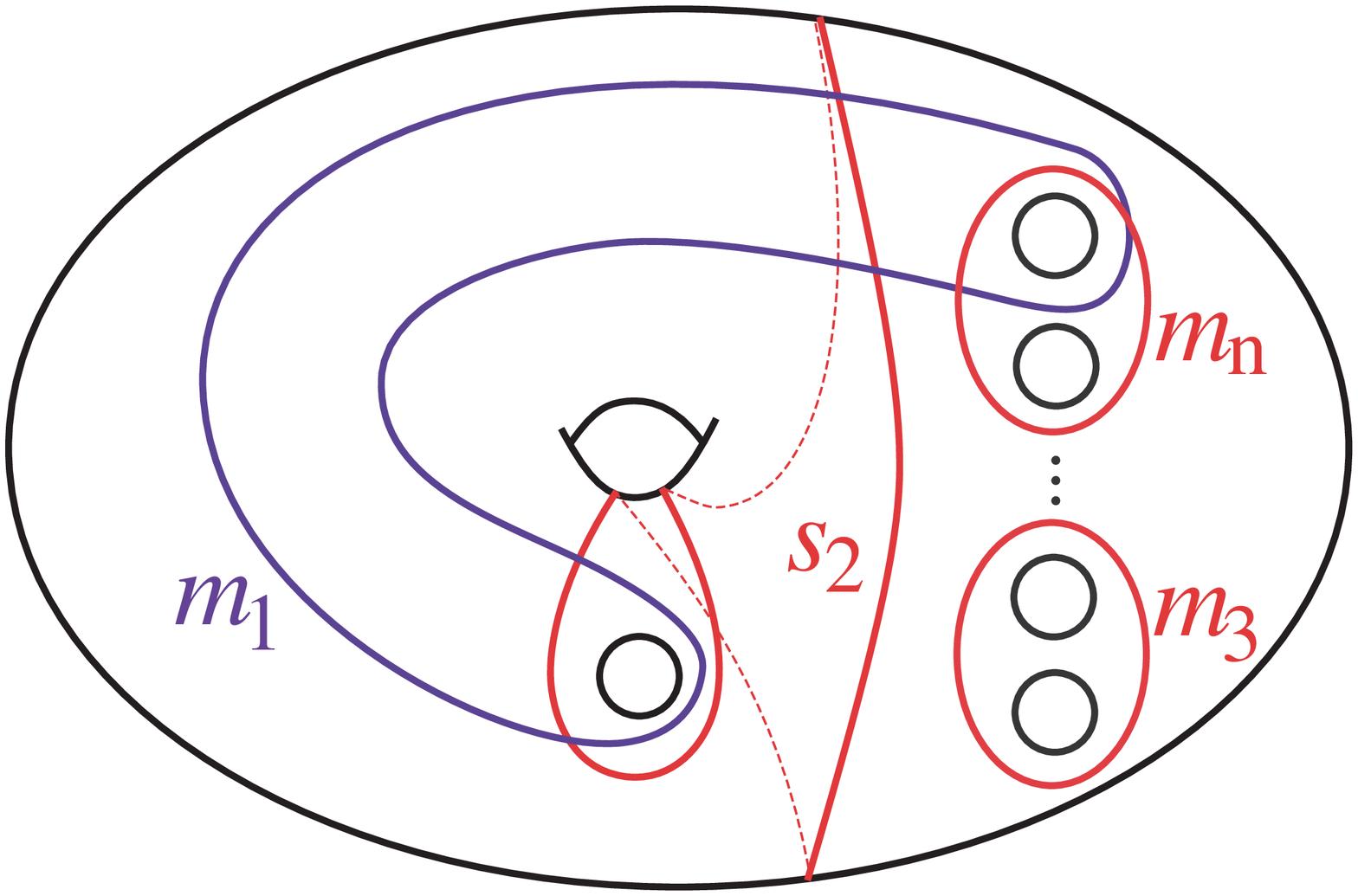}
		
		\vspace{0.3cm}
		
		(v)   \hspace{5.6cm} (vi)  \vspace{0.3cm}
		
		\epsfxsize=2.2in \epsfbox{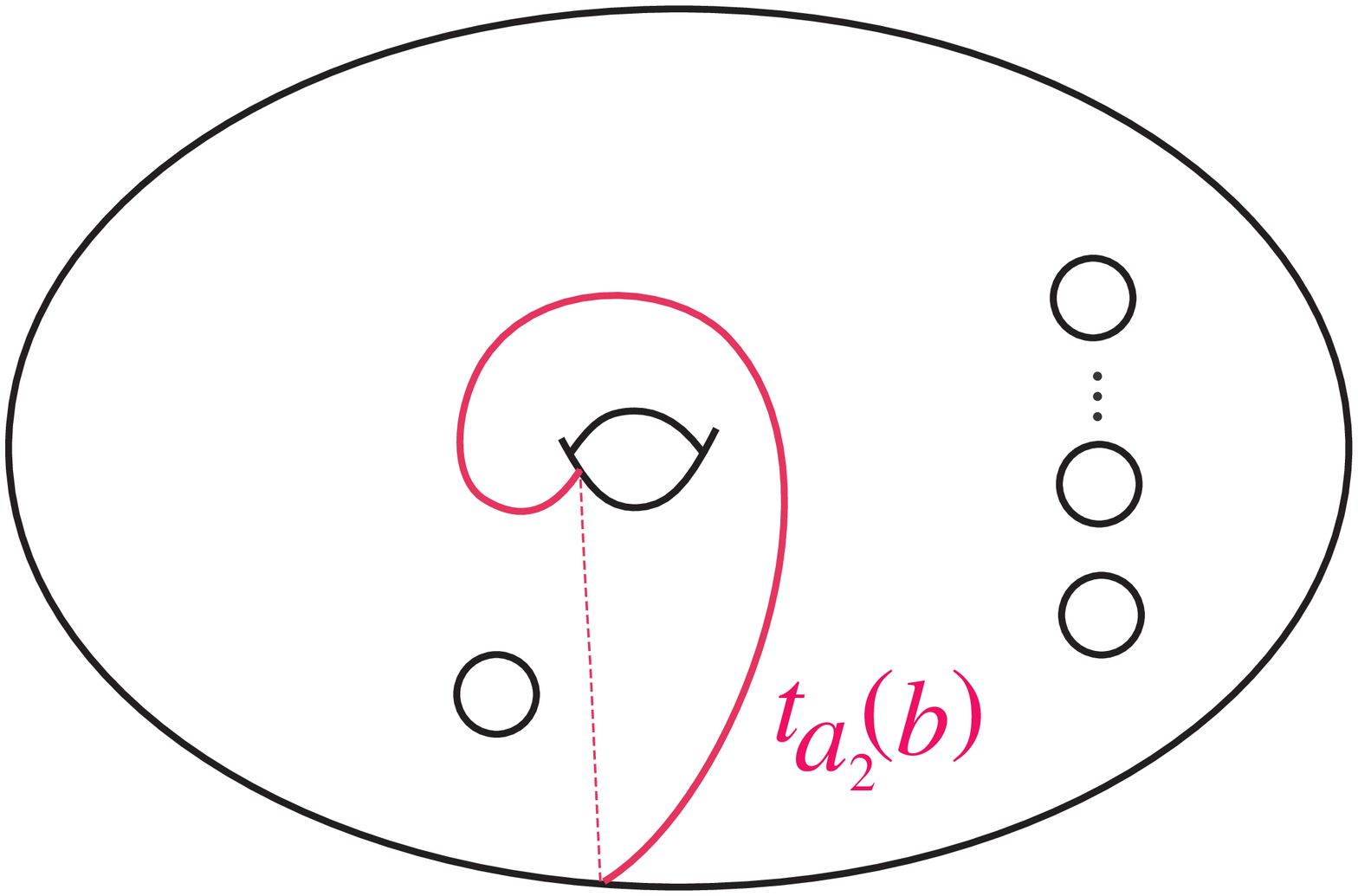} \hspace{0.2cm} 
		\epsfxsize=2.2in \epsfbox{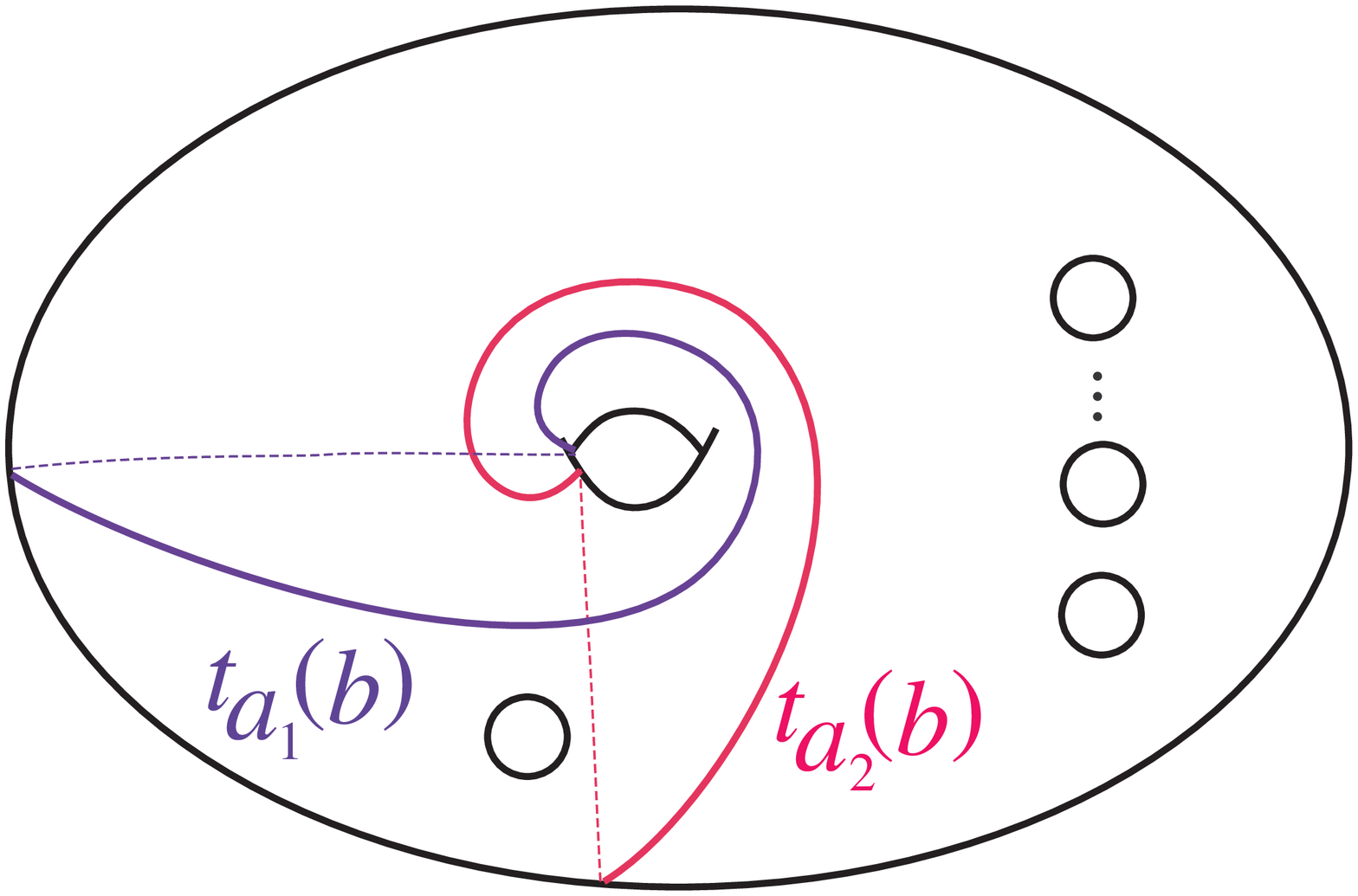}
		
		\vspace{0.3cm}	
		
		(vii)   \hspace{5.6cm} (viii)  
		
		\caption {Twists}
		\label{fig-11s}
	\end{center}
\end{figure}

Consider the curves given in Figure \ref{fig-8s} (i). Let $t_x$ be the Dehn twist about $x$. Let $\sigma_i$ be the half twist along 
$m_i$. The mapping class group $Mod_R$ can be generated by $\{t_x: x \in \{a_1, a_2, \cdots, a_{n}, b\} \} \cup \{\sigma_2,$ $\sigma_3, \cdots, \sigma_{n}\}$, see Corollary 4.15 in \cite{FM}. 
Let $G = \{t_x: x \in \{a_1, a_2, \cdots, a_{n}, b\} \} \cup \{\sigma_2, \sigma_3, \cdots, \sigma_{n}\}$. 
Let $h: R \rightarrow R$ be a homeomorphism which satisfies the statement of Lemma \ref{curves}. We know $h([x]) = \lambda([x])$ $\forall \ x \in \mathcal{C}$. 
We will follow the techniques given by Irmak-Paris \cite{IrP2} to obtain the homeomorphism we want. We will say that a subset $\mathcal{A} \subset \mathcal{C}(R)$ has trivial stabilizer if we have the following: $h \in Mod^*_R$, $h([x]) = [x]$ for every vertex $x \in A$ implies $h$ is identity.

\begin{lemma} \label{prop-imp}   $\forall \ f \in G$, 
	$\exists$ a set $L_f \subset \mathcal{C}(R)$
	such that $\lambda([x])= h([x])$ $\ \forall \ x \in L_f \cup f(L_f)$. $L_f$ can be chosen to have trivial stabilizer. \end{lemma}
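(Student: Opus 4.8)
The plan is, for each generator $f \in G$, to take $L_f$ to be the set $\mathcal{C}$ from Lemma \ref{curves} (enlarging it by a bounded number of explicitly drawn curves if a particular $f$ makes this convenient). With this choice the equality $\lambda([x]) = h([x])$ for $x \in L_f$ is exactly the conclusion of Lemma \ref{curves}, so there are only two things left to verify: (a) that $L_f$ has trivial stabilizer, and (b) that $\lambda([y]) = h([y])$ for every $y \in f(L_f)$, i.e. that $\lambda(f([x])) = h(f([x]))$ for every $x \in \mathcal{C}$.

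For (a), I would note that $\mathcal{C}$ already pins down a mapping class: the curves $a_1, \dots, a_n, b$ fill $R$ (a regular neighborhood of their union is $R$ itself, as observed in the proof of Lemma \ref{curves}), and adjoining $m_1, \dots, m_n$, $r_1, \dots, r_n$, $v_2, \dots, v_n$ leaves the configuration with no nontrivial symmetry compatible with fixing every curve up to isotopy, since the extra curves distinguish the complementary regions and boundary components; this is the same mechanism that gives the Aramayona--Leininger rigid sets a unique realizing mapping class, cf. \cite{AL2}. Hence the only $h' \in Mod^*_R$ fixing the isotopy class of each curve in $\mathcal{C}$ is the identity, so $\mathcal{C}$ has trivial stabilizer; if for some $f$ it is cleaner to work with a larger $L_f$, one takes $\mathcal{C}$ together with a few more curves on which $\lambda = h$ has already been verified by the method of Lemma \ref{curves}, which only shrinks the stabilizer.

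For (b), the key is that $\lambda$ transports intersection patterns built from $0$'s and $1$'s faithfully: since $\lambda$ is edge preserving, $i(\alpha,\beta) = 0$ (with $\alpha \neq \beta$) forces $i(\lambda(\alpha),\lambda(\beta)) = 0$, and by Lemma \ref{int-one}, $i(\alpha,\beta) = 1$ forces $i(\lambda(\alpha),\lambda(\beta)) = 1$. Now fix a generator $f$ and a curve $x \in \mathcal{C}$. If $f$ fixes $[x]$ (for instance when $x$ is disjoint from the support of $f$) there is nothing to prove. Otherwise $f([x])$ is obtained from $x$ by a single Dehn twist or half-twist, hence lies ``close'' to $\mathcal{C}$, and I would exhibit, exactly as in the proof of Lemma \ref{curves}, an explicit finite collection $C_1, \dots, C_k$ of curves in $\mathcal{C}$ such that $f([x])$ is the \emph{unique} nontrivial isotopy class meeting each $C_j$ in the prescribed number ($0$ or $1$) of points. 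Applying $\lambda$ and using the two transport properties above together with $\lambda([C_j]) = h([C_j])$, one sees that $\lambda(f([x]))$ satisfies precisely the characterizing conditions for $h(f([x]))$ relative to the curves $h([C_j])$; since $h$ is a homeomorphism these conditions still have the single solution $h(f([x]))$, whence $\lambda(f([x])) = h(f([x]))$. Running this through for each $x \in \mathcal{C}$ and each $f \in \{t_{a_1}, \dots, t_{a_n}, t_b, \sigma_2, \dots, \sigma_n\}$ establishes (b) and hence the lemma.

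I expect the main obstacle to be entirely in step (b): for every generator--curve pair one must produce, with an accompanying picture, a configuration of curves lying in the region where $\lambda = h$ is already known and check that it characterizes $f([x])$ uniquely using only intersection $0$/$1$ data. The half-twists $\sigma_j$ require extra care, since each permutes two boundary components of $R$ and therefore moves several of the $m_i$, $r_i$, $v_i$ simultaneously, so one must make sure the chosen characterizing curves remain among those already controlled after the permutation. One must also verify that the relevant uniqueness statements genuinely hold on the surface with $g=1$, $n \geq 3$ — the low-complexity coincidences in the $n \leq 2$ cases are exactly why the theorem fails there — so this is where the hypothesis $n \geq 3$ is used.
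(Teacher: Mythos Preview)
Your overall strategy is the paper's strategy: characterize each curve in $f(L_f)$ uniquely by its intersection-$0$ and intersection-$1$ data with curves on which $\lambda=h$ is already known, then transport via edge-preservation and Lemma~\ref{int-one}. So the method is right.

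The tactical choice of $L_f$, however, is the opposite of the paper's. You propose $L_f=\mathcal{C}$ (possibly enlarged), which forces you to verify $\lambda=h$ on $f([x])$ for \emph{every} $x\in\mathcal{C}$ moved by $f$; for $f=t_b$ this means all of $t_b(a_1),\dots,t_b(a_n)$ plus the images of the $r_i$, and for the $\sigma_i$ you yourself flag that several $m_j,r_j,v_j$ move at once. The paper instead chooses a \emph{minimal} $L_f$ with trivial stabilizer, tailored to $f$ so that $f$ fixes all but one or two of its elements: e.g.\ $L_{t_b}=\{a_1,\dots,a_n,b,r_1\}$, $L_{t_{a_j}}=\{a_1,\dots,a_n,b,r_j\}$, $L_{\sigma_i}=\{a_1,\dots,a_n,b,r_o\}$ with $r_o$ disjoint from $m_i$. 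Then only $t_b(a_i)$, or $t_{a_j}(b)$ and $t_{a_j}(r_j)$, or $\sigma_i(a_i)$, need the uniqueness argument, and the paper carries these out explicitly via a handful of auxiliary curves ($s_1,s_2,u_1,u_2,\dots$) whose classes are first pinned down by the same method. Your approach is not wrong, but it multiplies the casework substantially and your sketch does not actually execute any of it; the paper's device of shrinking $L_f$ rather than enlarging it is what keeps the proof finite and explicit.
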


\begin{proof} We have $h([x]) = \lambda([x])$ $\forall \ x \in  \mathcal{C}$ by Lemma \ref{curves}. Let $f \in G$. For $f=t_b$, let $L_f = \{a_1, a_2, \cdots, a_{n}, b, r_1\}$. The set $L_f$ has trivial stabilizer. We know $\lambda([x])= h([x])$
$\forall \ x \in L_f$. We need to check the equation for $t_{b}(a_i)$, the other curves in $L_f$ are fixed by $t_{b}$. We will first check the equation for $t_{b}(a_n)$.
Consider the curves given in Figure \ref{fig-11s}. 
The curve $s_1$ is the unique nontrivial curve up to isotopy that is disjoint from all the curves in
$\{a_1, r_n, v_{n-1}\}$. Since we know that $h([x]) = \lambda([x])$ for all
these curves and $\lambda$ is edge preserving, we have $h([s_1]) = \lambda([s_1])$.  
The curve $t_b(a_n)$ is the unique nontrivial curve up to isotopy that is disjoint from all the curves in
$\{m_1, s_1, v_{n-1}\}$. Since we know that $h([x]) = \lambda([x])$ for all
these curves and $\lambda$ is edge preserving, we have $h([t_b(a_n)]) = \lambda([t_b(a_n)])$. The curve $t_b(a_1)$ is the unique nontrivial curve up to isotopy that is disjoint from $t_b(a_n)$ and $v_{n}$, and also that intersects each of $a_1, b$ nontrivially once. Since we know that $h([x]) = \lambda([x])$ for all
these curves, $\lambda$ is edge preserving and it preserves intersection one, we have $h([t_b(a_1)]) = \lambda([t_b(a_1)])$. The curve $t_b(a_2)$ is the unique nontrivial curve up to isotopy that is disjoint from all the curves in $\{t_b(a_n), m_1, m_3, m_4, \cdots, m_n\}$ and also that intersects each of $a_2, b$ nontrivially once. Since we know that $h([x]) = \lambda([x])$ for all
these curves, $\lambda$ is edge preserving and it preserves intersection one, we have $h([t_b(a_2)]) = \lambda([t_b(a_2)])$. Similarly, we get $h([t_b(a_i)]) = \lambda([t_b(a_i)])$ for all $i = 3, 4, \cdots, n-1$. This proves the statement of the lemma for $f=t_b$. 
 
\begin{figure}
	\begin{center}
		\epsfxsize=2.2in \epsfbox{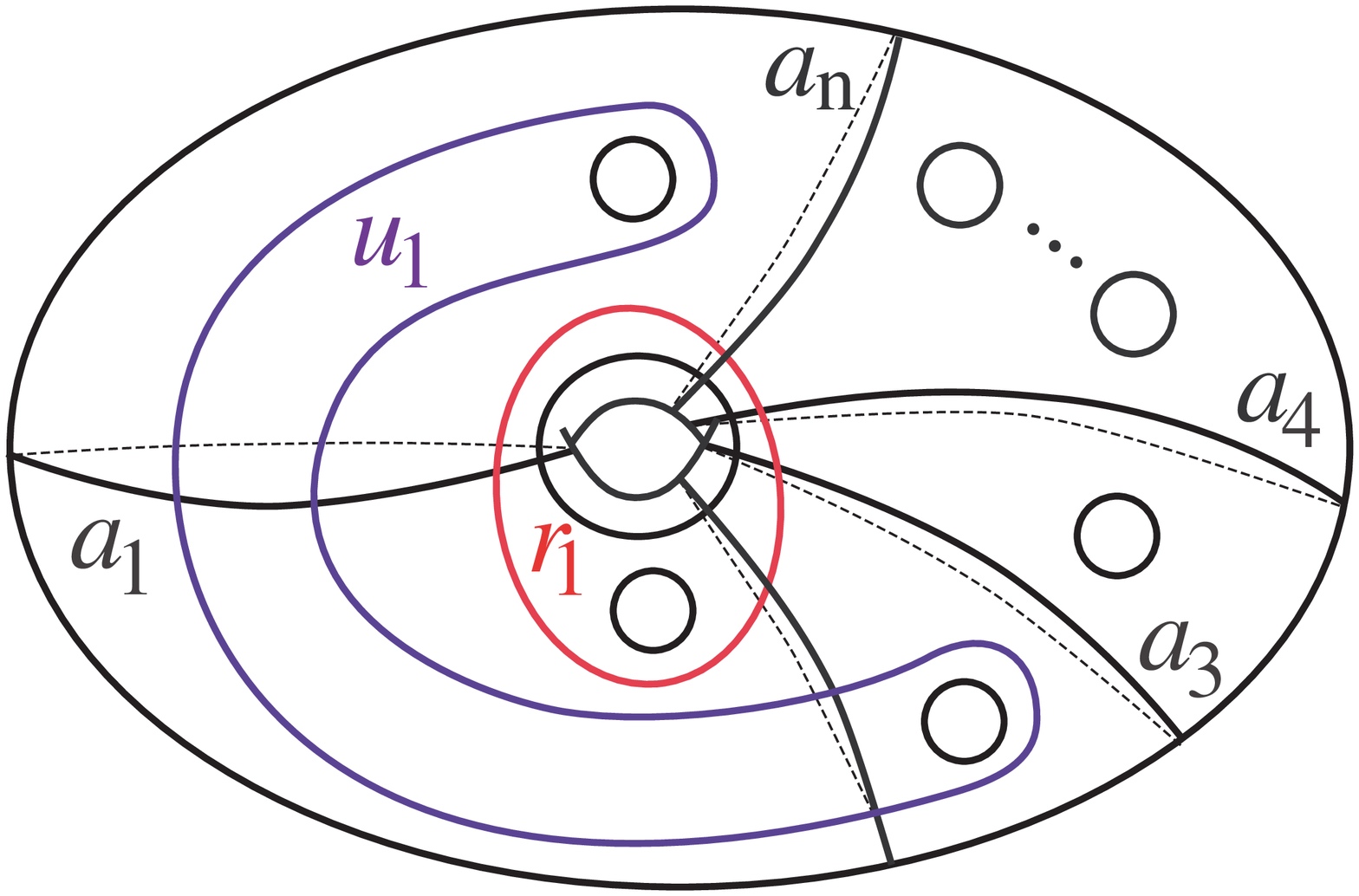} \hspace{0.2cm} 
		\epsfxsize=2.2in \epsfbox{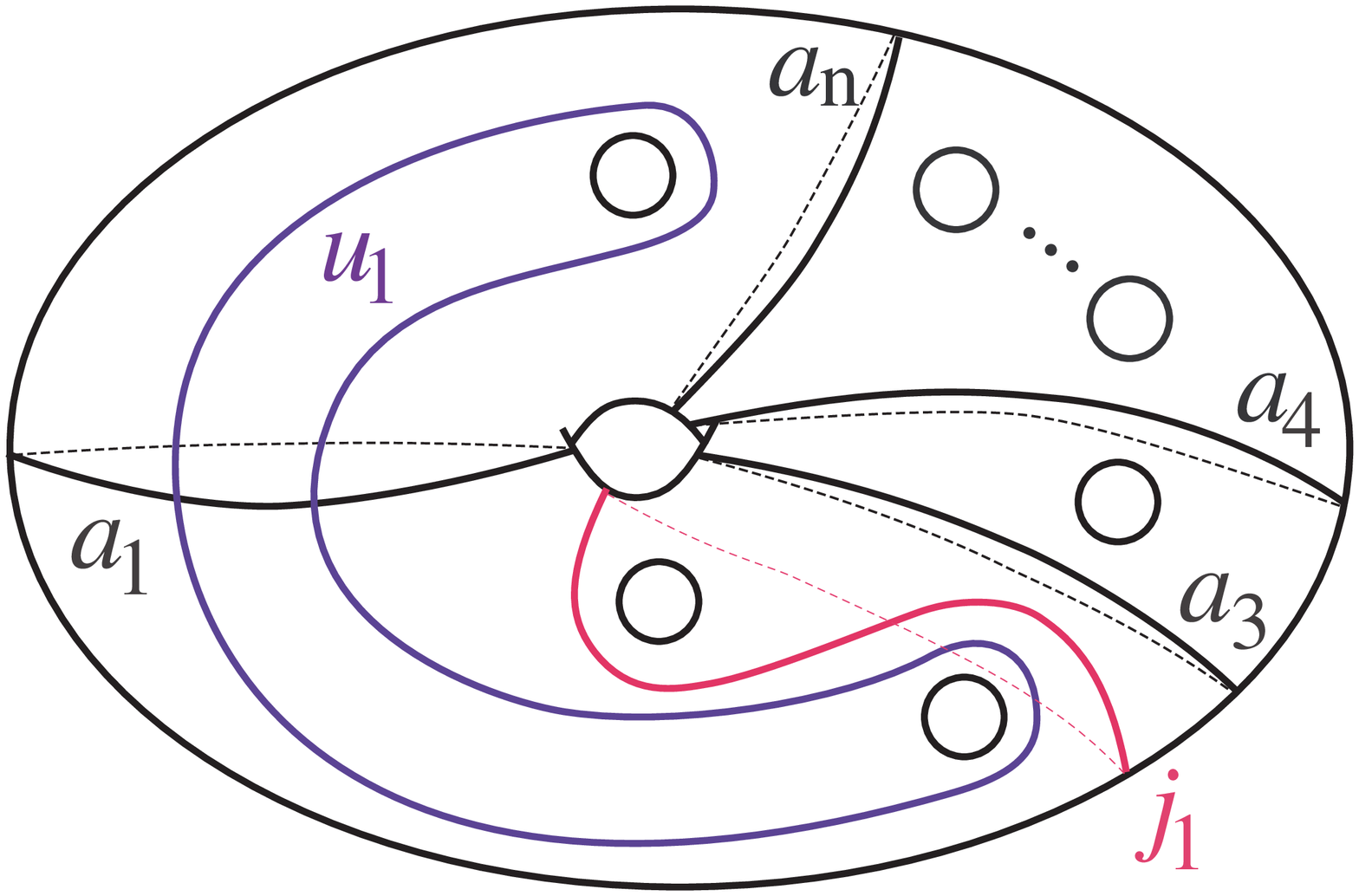}   
		
		(i)   \hspace{5.6cm} (ii) 
		
		\vspace{0.3cm}
		
		\epsfxsize=2.2in \epsfbox{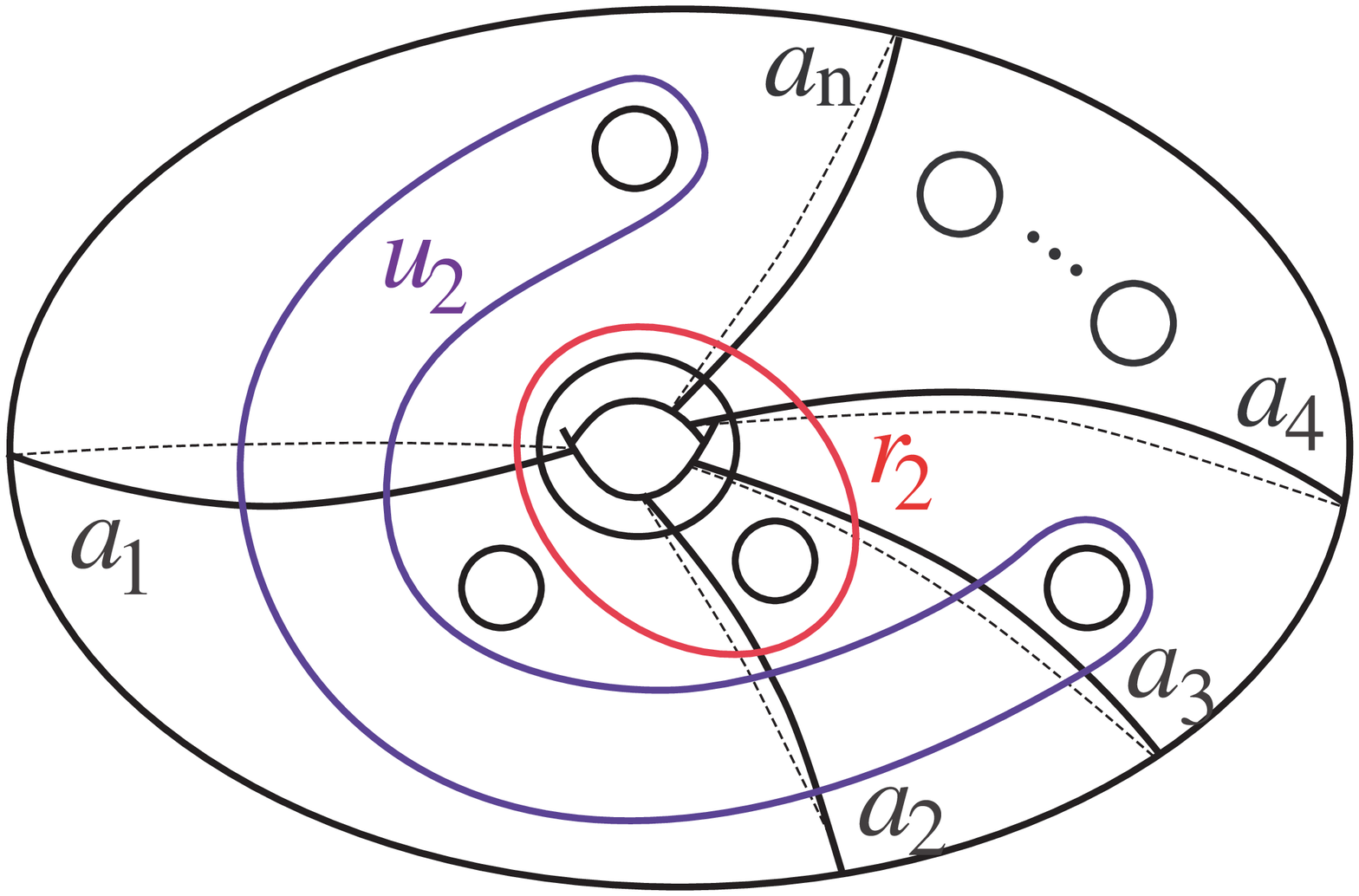} \hspace{0.2cm} 
		\epsfxsize=2.2in \epsfbox{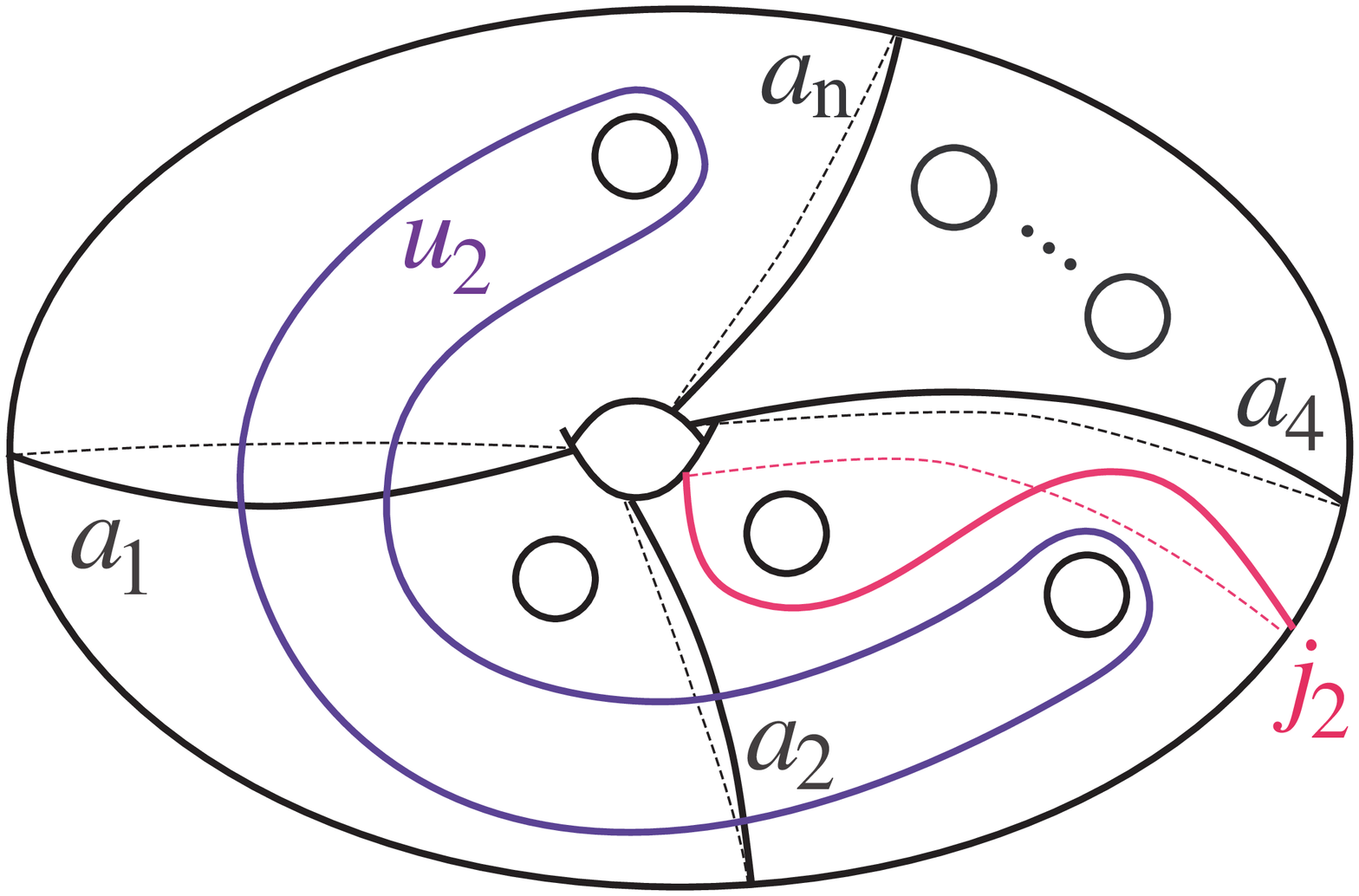}
		
		(iii)   \hspace{5.6cm} (iv) 	
		\caption {Twists, half-twists}
		\label{fig-12s}
	\end{center}
\end{figure} 
 
For $f=t_{a_2}$, let $L_f = \{a_1, a_2, \cdots, a_{n}, b, r_2\}$. The set $L_f$ has trivial stabilizer. We know $\lambda([x])= h([x])$
$\forall \ x \in L_f$. We just need to check the equation for $t_{a_2}(b)$ and 
$t_{a_2}(r_2)$ since the other curves in $L_f$ are fixed by $t_{a_2}$. 
Consider the curves given in Figure \ref{fig-11s} (v). 
The curve $s_2$ is the unique nontrivial curve up to isotopy that is disjoint from all the curves in
$\{a_1, r_1, m_3, m_4, \cdots, m_{n}\}$. Since we know that $h([x]) = \lambda([x])$ for all
these curves and $\lambda$ is edge preserving, we have $h([s_2]) = \lambda([s_2])$.  
The curve $t_{a_2}(b)$ is the unique nontrivial curve up to isotopy that is disjoint from all the curves in
$\{m_1, s_2, m_3, m_4, \cdots, m_{n}\}$. Since we know that $h([x]) = \lambda([x])$ for all
these curves and $\lambda$ is edge preserving, we have $h([t_{a_2}(b)]) = \lambda([t_{a_2}(b)])$. The curve $t_{a_1}(b)$ is the unique nontrivial curve up to isotopy that is disjoint from $t_{a_2}(b)$ and $v_{n}$. Since we know that $h([x]) = \lambda([x])$ for all
these curves and $\lambda$ is edge preserving we have $h([t_{a_1}(b)]) = \lambda([t_{a_1}(b)])$. Similarly, we get $h([t_{a_i}(b)]) = \lambda([t_{a_i}(b)])$ for all $i =3, 4, \cdots, n$. 
The curve $t_{a_2}(r_2)$ is the unique nontrivial curve up to isotopy that is disjoint from each of $m_1, m_2, m_4, m_5, \cdots, m_n, t_{a_1}(b)$. Since we know that $h([x]) = \lambda([x])$ for all
these curves and $\lambda$ is edge preserving, we have $h([t_{a_2}(r_2)]) = \lambda([t_{a_2}(r_2)])$. 
This proves the statement of the lemma for $f=t_{a_2}$. 

Similarly, for $f=t_{a_j}$ when $j \in \{1, 3, 4, \cdots, n\}$, let $L_f = \{a_1, a_2, \cdots, a_{n}, b, r_j\}$. The set $L_f$ has trivial stabilizer. We know $\lambda([x])= h([x])$
$\forall \ x \in L_f$. We just need to check the equation for $t_{a_j}(b)$ and 
$t_{a_j}(r_j)$ since the other curves in $L_f$ are fixed by $t_{a_j}$. 
In the above paragraph we already obtained that $h([t_{a_j}(b)]) = \lambda([t_{a_j}(b)])$.  
When $j < n$, the curve $t_{a_j}(r_j)$ is the unique nontrivial curve up to isotopy that is disjoint from each of 
$m_1, m_2, \cdots, m_j, m_{j+2}, m_{j+3}, \cdots, m_n, t_{a_1}(b)$. 
Since we know that $h([x]) = \lambda([x])$ for all
these curves and $\lambda$ is edge preserving, we have $h([t_{a_j}(r_j)]) = \lambda([t_{a_j}(r_j)])$ when $j < n$. 
The curve $t_{a_n}(r_n)$ is the unique nontrivial curve up to isotopy that is disjoint from each of $m_2, m_3, \cdots, m_n, t_{a_1}(b)$. Since we know that $h([x]) = \lambda([x])$ for all
these curves and $\lambda$ is edge preserving, we have $h([t_{a_n}(r_n)]) = \lambda([t_{a_n}(r_n)])$. 
Hence, we obtain the statement of the lemma for $f=t_{a_j}$ for all $j \in \{1, 2, \cdots, n\}$.
  
For $f = \sigma_i$, where $i \in \{2, 3, \cdots, n\}$, we let $L_f = \{a_1, a_2, \cdots, a_{n}, b, r_o\}$ where $r_o \in \{r_1, r_2, \cdots, r_n\}$ such that $r_o$ is disjoint from $m_i$. We know that $\lambda ([x]) = h ([x])$ for all $x \in L_f$. We just need to check $h ([ \sigma_i (a_i) ]) = \lambda ([ \sigma_i (a_i) ])$ for each $i$ since the other curves in $L_f$ are fixed by 
$\sigma_i$. For $i=2$, we use the curve $u_1$ shown in Figure \ref{fig-12s} (i). The curve $u_1$ is the unique nontrivial curve up to isotopy that is disjoint from $a_3, a_4, \cdots, a_n, b, r_1$. Since we know that $h([x]) = \lambda([x])$ for all
these curves and $\lambda$ is edge preserving, we have $h([u_1]) = \lambda([u_1])$. The curve $\sigma_2 (a_2)$, which is shown as $j_1$ in Figure \ref{fig-12s} (ii), is the unique curve up to isotopy disjoint from $a_1, a_3, a_4, \cdots, a_n, u_1$ which intersects $b$ once and nonisotopic to $a_3$.
Since we know that $h([x]) = \lambda([x])$ for all these curves and $\lambda$ preserves these properties, we see that 
$h([\sigma_2 (a_2)]) = \lambda ([\sigma_2 (a_2)])$. For $i=3$, we use the curve $u_2$ shown in Figure \ref{fig-12s} (iii). The curve $u_2$ is the unique nontrivial curve up to isotopy that is disjoint from $a_4, a_5, \cdots, a_n, b, r_1, r_2$. Since we know that $h([x]) = \lambda([x])$ for all
these curves and $\lambda$ is edge preserving, we have $h([u_2]) = \lambda([u_2])$. The curve $\sigma_3 (a_3)$, which is shown as $j_2$ in Figure \ref{fig-12s} (iv), is the unique curve up to isotopy disjoint from $a_1, a_2, a_4, a_5, \cdots, a_n, u_2$ which intersects $b$ once and nonisotopic to $a_4$.
Since we know that $h([x]) = \lambda([x])$ for all these curves and $\lambda$ preserves these properties, we see that 
$h([\sigma_3 (a_3)]) = \lambda ([\sigma_3 (a_3)])$. 
Similarly we get $h ([ \sigma_i (a_i) ]) = \lambda ([ \sigma_i (a_i) ])$ for each $i = 4, 5, \cdots, n$.\end{proof}

\begin{theorem} \label{A} There exists a homeomorphism $h : R \rightarrow R$ such that $H(\alpha) = \lambda(\alpha)$ for every vertex $\alpha$ in $\mathcal{C}(R)$ where $H=[h]$ and this homeomorphism is unique up to isotopy.\end{theorem}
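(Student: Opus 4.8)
\emph{Sketch of proof.} By Lemma \ref{curves} there is a homeomorphism $h\colon R\to R$ with $h([x])=\lambda([x])$ for every $x\in\mathcal{C}$; the plan is to promote this to an equality on all of $\mathcal{C}(R)$. Two preliminary remarks. First, $\mathcal{C}$ has trivial stabilizer, since it contains the set $L_{t_b}=\{a_1,\dots,a_n,b,r_1\}$, which does by Lemma \ref{prop-imp}; hence any two homeomorphisms inducing $\lambda$ agree on $\mathcal{C}$ and so differ by an isotopy, which gives the uniqueness statement. Second, every vertex of $\mathcal{C}(R)$ lies in $\phi(\mathcal{C})$ for some $\phi\in Mod_R$: by the change of coordinates principle $Mod_R$ acts transitively on the curves of each topological type, there are only finitely many such types on the genus-one surface $R$ (one nonseparating type and $n-1$ separating types, according to how the boundary components are distributed between the genus-zero and genus-one complementary pieces), and the curves in $\mathcal{C}$ exhaust them. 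So it is enough to prove that $\lambda=h$ on $\phi(\mathcal{C})$ for every $\phi\in Mod_R$.

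I would establish this by induction on the word length of $\phi$ in the generating set $G^{\pm1}$ of $Mod_R$, but in the strengthened form that makes the induction close: \emph{for every $\phi\in Mod_R$ and every edge-preserving self-map $\mu$ of $\mathcal{C}(R)$ with $\mu=h$ on $\mathcal{C}$, we have $\mu=h$ on $\phi(\mathcal{C})$.} The point that makes this legitimate is that every lemma of this section was proved using only that the map in question is edge-preserving, and that the composite of an edge-preserving map with simplicial automorphisms of $\mathcal{C}(R)$ (those induced by homeomorphisms of $R$) is again edge-preserving; so Lemmas \ref{curves} and \ref{prop-imp} may be applied verbatim to any such $\mu$, and the homeomorphism they produce is forced to coincide with $h$ as soon as $\mu=h$ on the trivial-stabilizer set $\mathcal{C}$.

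The base case $\phi=\mathrm{id}$ is the hypothesis on $\mu$. For a single generator $\phi=f^{\pm1}$, $f\in G$, set $\nu:=(hf^{-1}h^{-1})_{*}\circ\mu\circ f_{*}$, which is edge-preserving. Lemma \ref{prop-imp} applied to $\mu$ gives $\mu=h$ on $L_f\cup f(L_f)$, with $L_f\subset\mathcal{C}$ of trivial stabilizer; a short mapping-class computation using $\mu=h$ on $f(L_f)$ then gives $\nu=h$ on $L_f$; Lemma \ref{curves} applied to $\nu$ produces a homeomorphism agreeing with $\nu$ on $\mathcal{C}$, which equals $h$ because the two agree on $L_f$, so in fact $\nu=h$ on all of $\mathcal{C}$; unwinding the definition of $\nu$ yields $\mu=h$ on $f(\mathcal{C})$. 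The inverse generator is handled identically with $L_{f^{-1}}:=f(L_f)$, which also has trivial stabilizer. For the inductive step, assume the statement for $\phi_1$ and for $\phi_2$; given $\mu=h$ on $\mathcal{C}$, apply the statement for $\phi_1$ to $\mu$, deduce that $\nu:=(h\phi_1^{-1}h^{-1})_{*}\circ\mu\circ(\phi_1)_{*}$ satisfies $\nu=h$ on $\mathcal{C}$, apply the statement for $\phi_2$ to $\nu$, and unwind to obtain $\mu=h$ on $(\phi_1\phi_2)(\mathcal{C})$. Hence the set of $\phi$ for which the statement holds is a submonoid of $Mod_R$ containing $G\cup G^{-1}$, so it is all of $Mod_R$. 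Taking $\mu=\lambda$ (permissible by Lemma \ref{curves}) we conclude $\lambda=h$ on every $\phi(\mathcal{C})$, hence on all of $\mathcal{C}(R)$.

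I expect the main difficulty to be the bookkeeping in this conjugation argument: one must keep track of the fact that Lemmas \ref{curves} and \ref{prop-imp}, when applied to a conjugated map such as $\nu$, return the \emph{same} homeomorphism $h$ as for $\lambda$ --- which is precisely where the trivial-stabilizer property enters --- and one must also check the modest topological fact that the finite family $\mathcal{C}$ really does meet every $Mod_R$-orbit of vertices in genus one with $n\geq 3$ boundary components. The remaining steps are routine computations in the mapping class group.
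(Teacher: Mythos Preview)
Your argument is correct and is essentially the paper's own proof, repackaged: both propagate the equality $h=\lambda$ from $\mathcal{C}$ to all of $\mathcal{C}(R)$ by an induction along words in $G^{\pm1}$, using the trivial-stabilizer sets $L_f$ and $f(L_f)$ from Lemma~\ref{prop-imp} as anchors. The paper phrases the induction as an exhaustion $\mathcal{C}(R)=\bigcup_k\mathcal{X}_k$ with $\mathcal{X}_k=\mathcal{X}_{k-1}\cup\bigcup_{f\in G}\bigl(f(\mathcal{X}_{k-1})\cup f^{-1}(\mathcal{X}_{k-1})\bigr)$ and simply asserts ``there exists a homeomorphism $h_f$ with $h_f=\lambda$ on $f(\mathcal{X}_{k-1})$'', deferring to the construction in~\cite{IrP1}; your conjugation $\nu=(hf^{-1}h^{-1})_*\circ\mu\circ f_*$ together with the explicit quantification ``for every edge-preserving $\mu$'' is exactly what lies behind that assertion.

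One small point: your claim that the inverse generator is ``handled identically with $L_{f^{-1}}:=f(L_f)$'' is not quite literal. In the forward step you used $L_f\subset\mathcal{C}$ to identify $h_\nu$ with $h$, but $f(L_f)$ need not lie in $\mathcal{C}$. The fix is immediate: after producing $h_\nu$ from Lemma~\ref{curves} for $\nu$, also apply Lemma~\ref{prop-imp} to $\nu$ so that $h_\nu=\nu$ holds on $L_f\cup f(L_f)$; then your computation $\nu=h$ on $f(L_f)$ (which only uses $\mu=h$ on $L_f\subset\mathcal{C}$) and the trivial-stabilizer property of $f(L_f)$ give $h_\nu=h$. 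This is precisely what the paper encodes via the inclusion $L_f\subset\mathcal{X}_{k-1}\cap f^{-1}(\mathcal{X}_{k-1})$.
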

 
\begin{proof} Let $f \in G$. There exists $L_f \subset \mathcal{C}(R)$ which satisfies the statement of Lemma \ref{prop-imp}. Consider $\mathcal{C}$ given in Lemma \ref{curves}. Let $\mathcal{X}= \mathcal{C} \cup \big( \bigcup _{f \in G} (L_f \cup f(L_f) \big ))$. For each vertex $x$ in the curve complex, there exists $r \in Mod_R$ and a vertex $y$ in the set $\mathcal{X}$ such that $r(y)=x$. By following the construction given in \cite{IrP1}, we let $\mathcal{X}_1 = \mathcal{X}$ and $\mathcal{X}_k = \mathcal{X}_{k-1} \cup (\bigcup _{f \in G} (f(\mathcal{X}_{k-1}) \cup f^{-1}(\mathcal{X}_{k-1})))$ when $k \geq 2$. We observe that 
$\mathcal{C}(R) = \bigcup _{k=1} ^{\infty} \mathcal{X}_k$. We will prove that $h([x]) = \lambda([x])$ for all $x \in \mathcal{X}_{k}$ for each $k \geq 1$. We will give the proof by induction on $k$. By using Lemma \ref{curves} and Lemma \ref{prop-imp}, we see that $h([x]) = \lambda([x])$ for each
$x \in \mathcal{X}_1$. Assume that $h([x]) = \lambda([x])$ for all $x \in \mathcal{X}_{k-1}$ for some $k \geq 2$. Let $f \in G$. There exists a homeomorphism $h_f$ of $R$ such that $h_f([x]) = \lambda([x])$ 
for all $x \in f(\mathcal{X}_{k-1})$. We have $f(L_f) \subset \mathcal{X}_{k-1} \cap f(\mathcal{X}_{k-1})$. This implies that  we have $h_f = h$ since $f(L_f)$ has trivial stabilizer. Similarly, there exists a homeomorphism 
$h'_f$ of $R$ such that $h'_f([x]) = \lambda([x])$ for all $x \in f^{-1}(\mathcal{X}_{k-1})$. We have $L_f \subset \mathcal{X}_{k-1} \cap f^{-1}(\mathcal{X}_{k-1})$. This implies that we have $h'_f = h$ since $L_f$ has trivial stabilizer. So, $h([x]) = \lambda([x])$ for each $x \in \mathcal{X}_k$. Hence, by induction 
$h([x]) = \lambda([x])$ for each $x \in \mathcal{X}_k$ for all $k \geq 1$. Since 
$\mathcal{C}(R) = \bigcup _{k=1} ^{\infty} \mathcal{X}_k$, we have $h([x]) = \lambda([x])$ for every vertex $[x] \in \mathcal{C}(R)$. It is easy to see that this homeomorphism is unique up to isotopy.\end{proof}

\section{Edge Preserving Maps of $\mathcal{C}(R)$ when $g=0, n \geq 5$}

In this section we will always assume that $g = 0$, $n \geq 5$ and $\lambda :\mathcal{C}(R) \rightarrow \mathcal{C}(R)$ is an edge preserving map. As in the second section we have the following two lemmas.

\begin{lemma}
	\label{inj-2} The map $\lambda$ is injective on every set of vertices in $\mathcal{C}(R)$ if each pair in the set has geometric intersection zero.
\end{lemma}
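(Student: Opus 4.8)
The plan is to argue exactly as in the proof of Lemma~\ref{inj-1}, since the only ingredient used there was that $\lambda$ is edge preserving and nothing about the genus of $R$. First I would fix a set $\mathcal{A}$ of vertices of $\mathcal{C}(R)$ with the property that any two of them have geometric intersection zero, and then take two arbitrary distinct vertices $\alpha, \beta \in \mathcal{A}$. Because $i(\alpha,\beta)=0$, the classes $\alpha$ and $\beta$ admit disjoint representatives, so $\{\alpha,\beta\}$ spans an edge of $\mathcal{C}(R)$.

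Since $\lambda$ is edge preserving, $\{\lambda(\alpha),\lambda(\beta)\}$ also spans an edge of $\mathcal{C}(R)$; as an edge joins two \emph{distinct} vertices, this forces $\lambda(\alpha)\neq\lambda(\beta)$. Because $\alpha$ and $\beta$ were arbitrary distinct elements of $\mathcal{A}$, the restriction of $\lambda$ to $\mathcal{A}$ is injective, which is the assertion of the lemma.

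There is no real obstacle here: this statement is the genus-zero counterpart of Lemma~\ref{inj-1} and follows word for word from the definition of an edge preserving map. It is recorded separately only because this section operates under the standing assumption $g=0$, $n\ge 5$ rather than $g=1$, $n\ge 3$, and the subsequent lemmas (the genus-zero analogues of Lemma~\ref{pd-inj-1} and beyond) will invoke it. The one point worth making explicit is the convention that the curve graph has no loops, so that the mere existence of an edge between $\lambda(\alpha)$ and $\lambda(\beta)$ genuinely certifies their distinctness.
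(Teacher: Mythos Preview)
Your proposal is correct and matches the paper's approach exactly: the paper does not even write out a separate proof for this lemma, simply noting that it follows ``as in the second section,'' i.e., by the same edge-preserving argument you give (which is the proof of Lemma~\ref{inj-1} verbatim).
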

 
\begin{lemma} \label{pd-inj-2} Let $P$ be a pants decomposition on $R$. 
	A set of pairwise disjoint representatives of $\lambda([P])$ is a pants decomposition on $R$.\end{lemma}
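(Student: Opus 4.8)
The plan is to argue exactly as in the genus one case (Lemma~\ref{pd-inj-1}): the statement will follow from Lemma~\ref{inj-2} together with the standard fact that on a surface of genus $g$ with $n$ boundary components, a collection of pairwise disjoint, pairwise non-isotopic nontrivial simple closed curves is a pants decomposition if and only if it has the maximal possible cardinality $3g-3+n$; here $g=0$, so this number is $n-3$.

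First I would note that since $P$ is a pants decomposition it has exactly $n-3$ elements, and any two distinct curves in $P$ are disjoint, hence joined by an edge of $\mathcal{C}(R)$. Because $\lambda$ is edge preserving, the vertices $\{\lambda([c]) : c\in P\}$ are pairwise joined by edges of $\mathcal{C}(R)$, i.e.\ each pair of them has geometric intersection zero; since the curve complex is flag, such a clique spans a simplex, so these vertices admit a simultaneous realization by pairwise disjoint simple closed curves. Thus a set $P'$ of pairwise disjoint representatives of $\lambda([P])$ exists.

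Next I would invoke Lemma~\ref{inj-2}: the curves of $P$ pairwise have geometric intersection zero, so $\lambda$ is injective on $[P]$, and hence $\lambda([P])$ consists of exactly $n-3$ distinct vertices. Consequently $P'$ is a family of $n-3$ pairwise disjoint, pairwise non-isotopic nontrivial simple closed curves on $R$, which by the cardinality criterion above must be a pants decomposition. This completes the argument.

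I do not anticipate a genuine obstacle here: this is simply the genus zero analogue of Lemma~\ref{pd-inj-1}, whose proof in the paper is a one-line reference to Lemma~\ref{inj-1}. The only point needing (routine) attention is the passage from pairwise disjointness of the images in $\mathcal{C}(R)$ to a single simultaneously disjoint realization — precisely the flagness of the curve complex built into the definition given in the Introduction — together with the bookkeeping that a maximal disjoint, non-isotopic family on a planar surface with $n$ boundary components has exactly $n-3$ curves.
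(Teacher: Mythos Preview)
Your proposal is correct and follows exactly the approach the paper intends: the paper's own proof of Lemma~\ref{pd-inj-2} is simply ``As in the second section,'' i.e.\ a one-line reference to Lemma~\ref{inj-2}, and you have merely spelled out the underlying maximal-cardinality and flagness facts that justify that reference.
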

 
\begin{figure}
	\begin{center}  \epsfxsize=2.2in \epsfbox{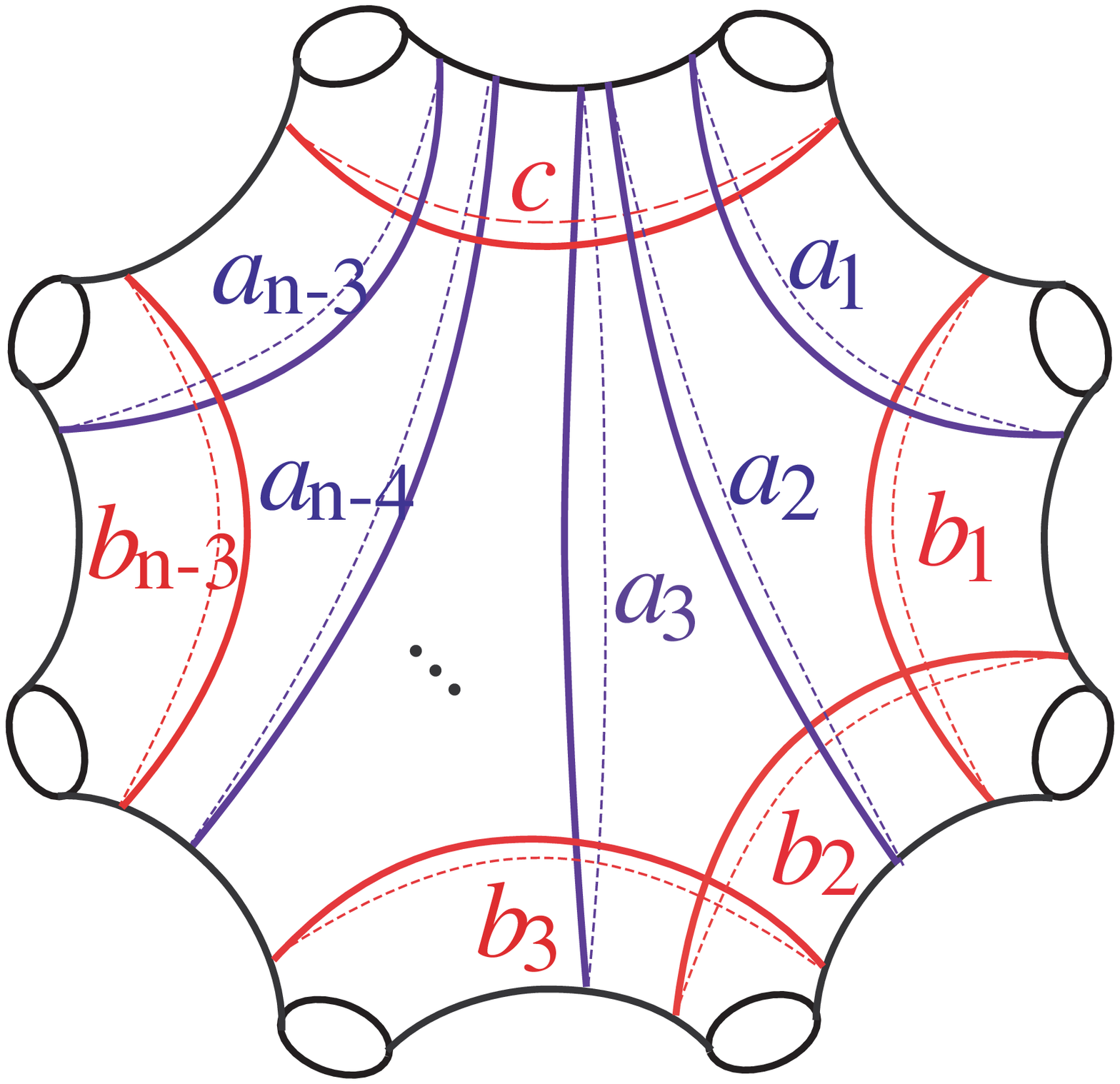} \hspace{0.2cm} 
		\epsfxsize=2.2in \epsfbox{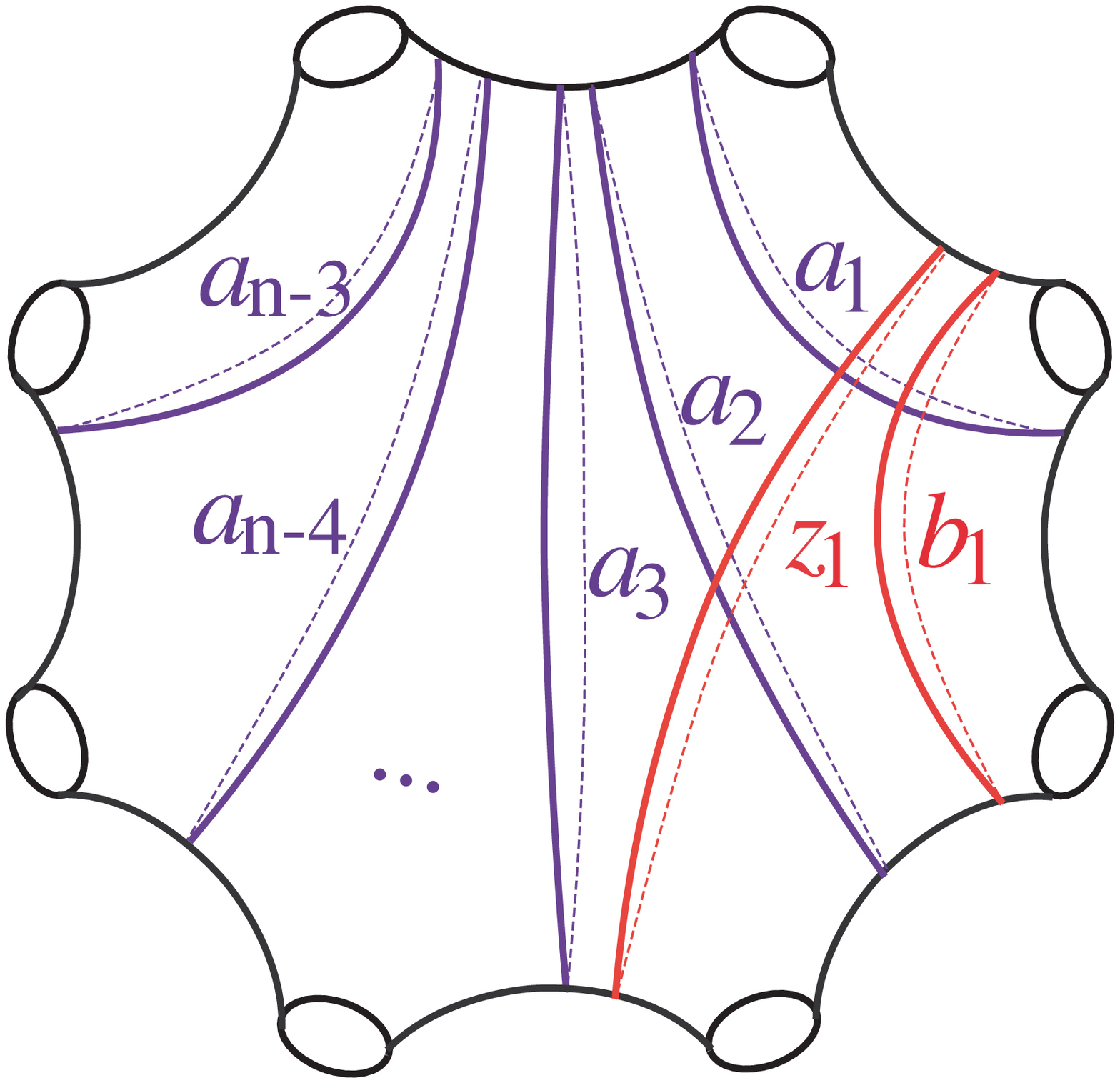}
	
		(i)   \hspace{5.2cm} (ii)  
			
		\epsfxsize=2.2in \epsfbox{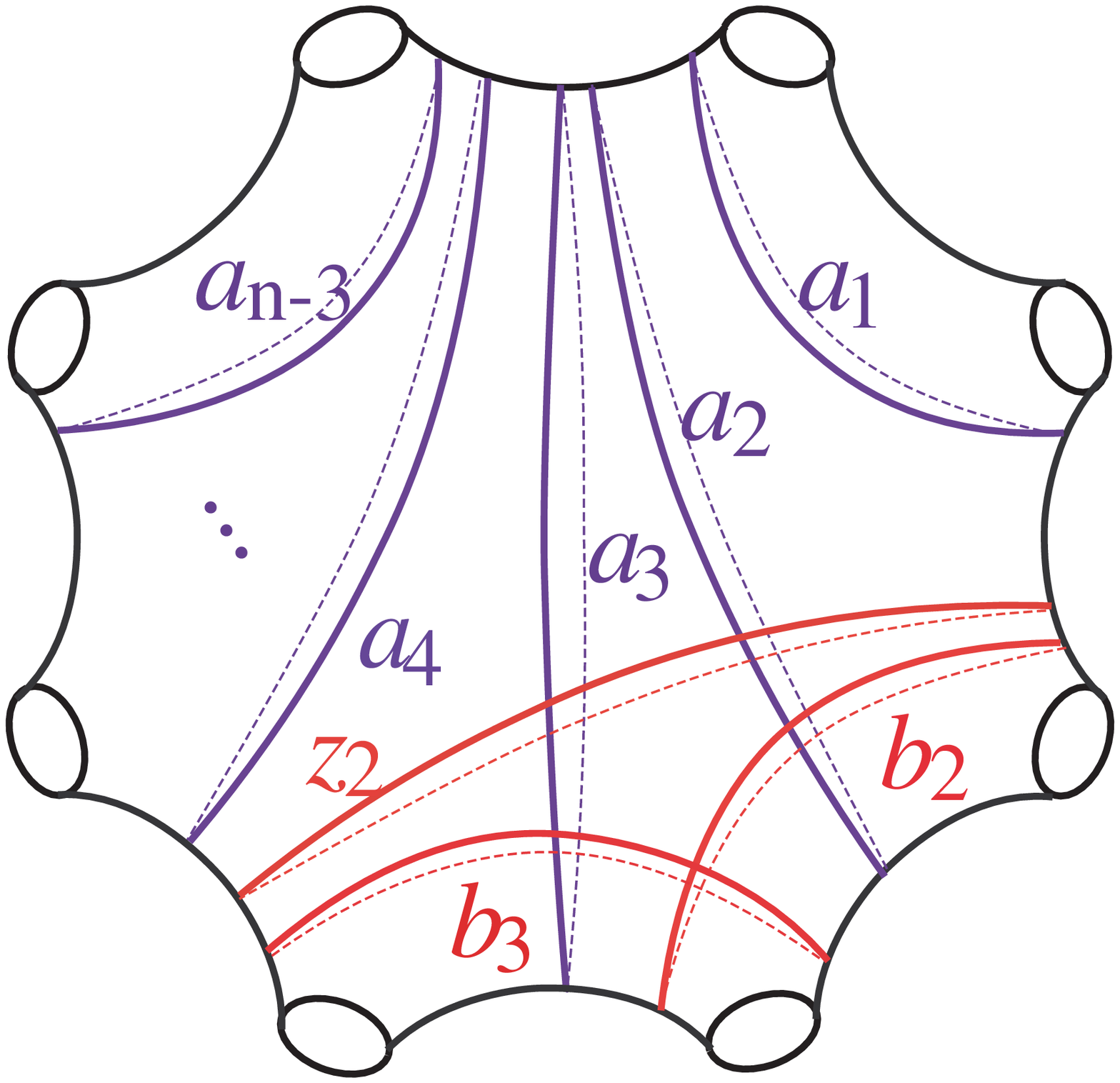}\hspace{0.2cm} 
		\epsfxsize=2.2in \epsfbox{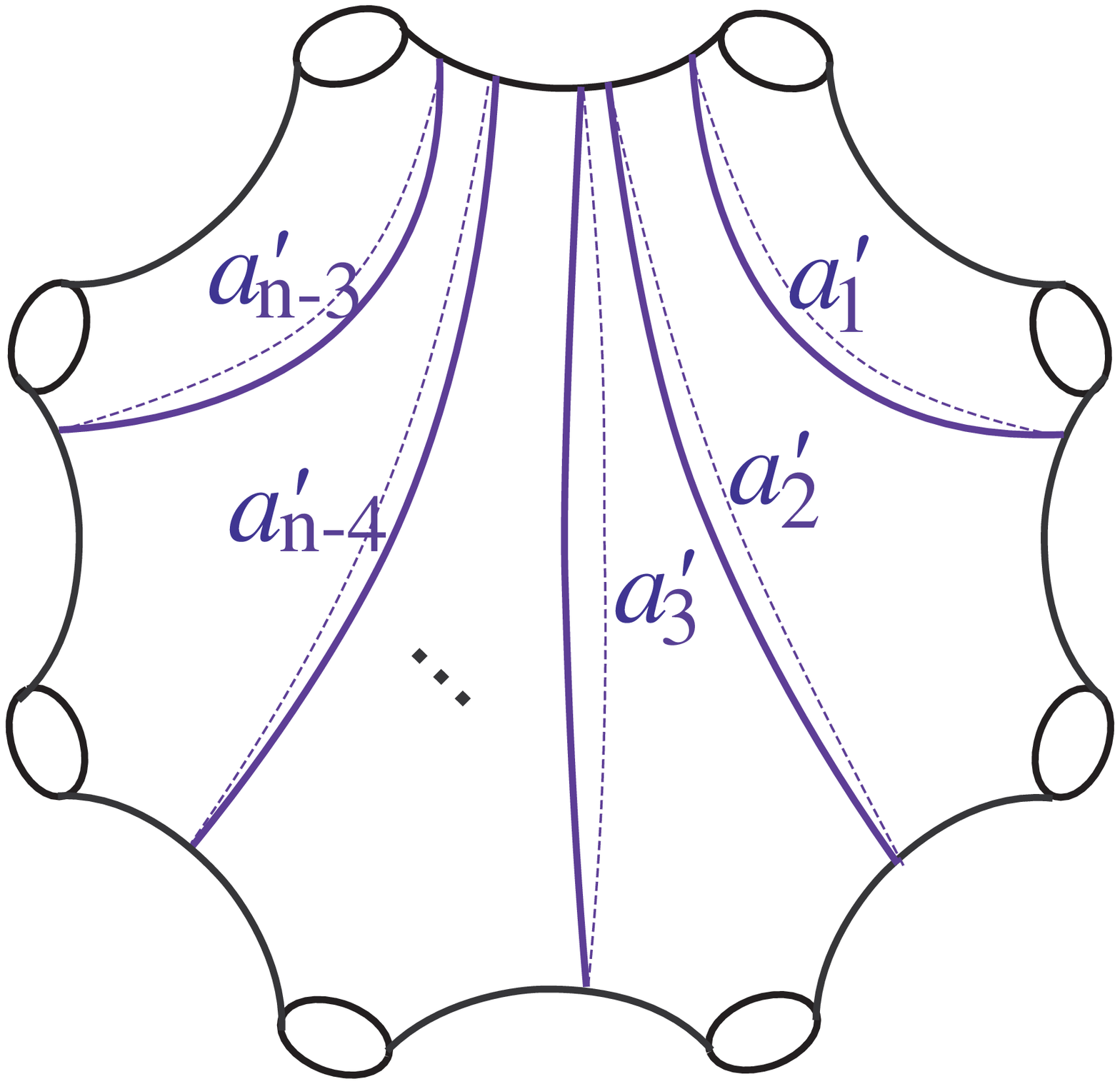} \hspace{0.2cm} 
		
			(iii)   \hspace{5.2cm} (iv) 
			
			\epsfxsize=2.2in \epsfbox{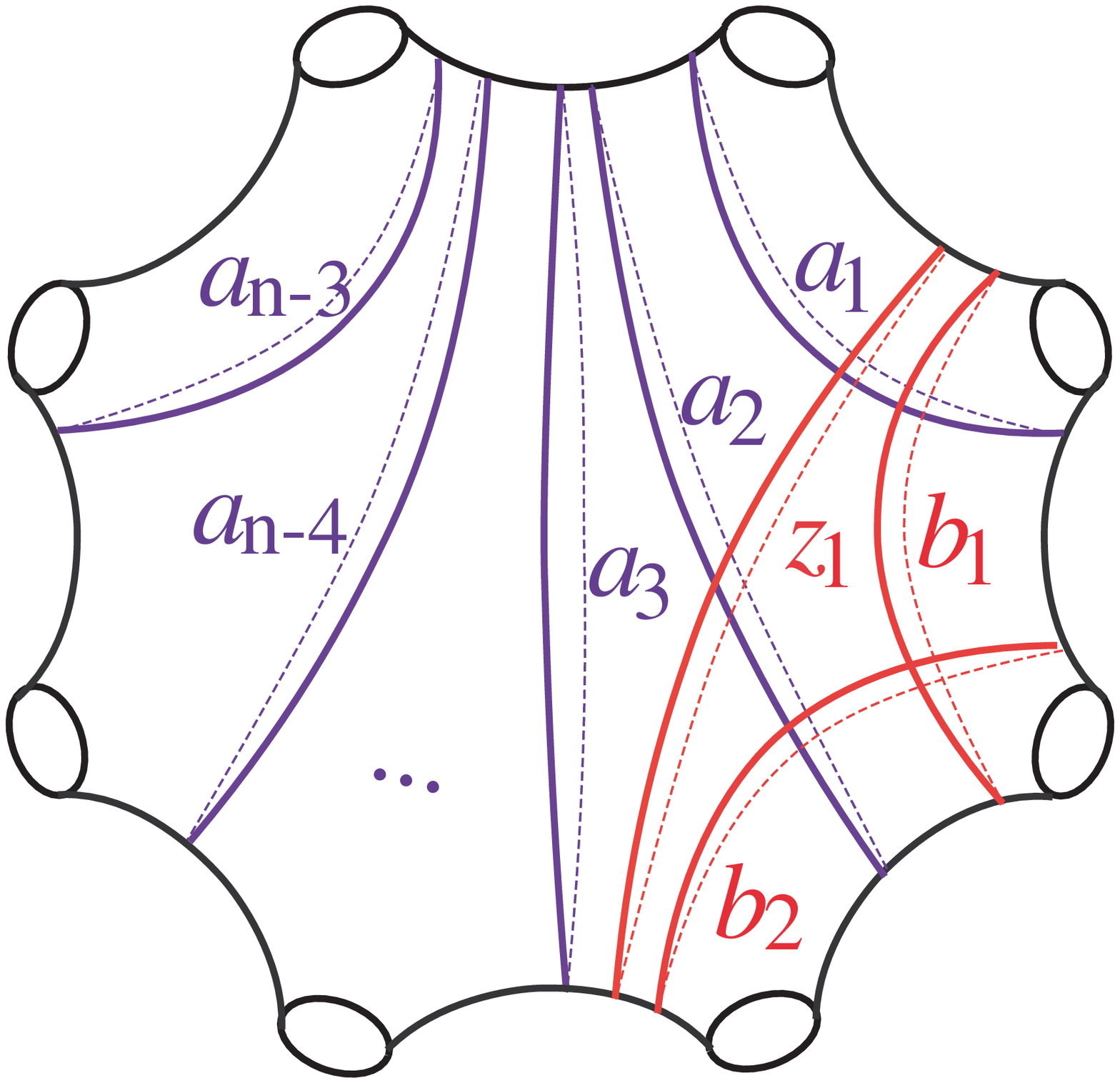} \hspace{0.2cm} 
		\epsfxsize=2.2in \epsfbox{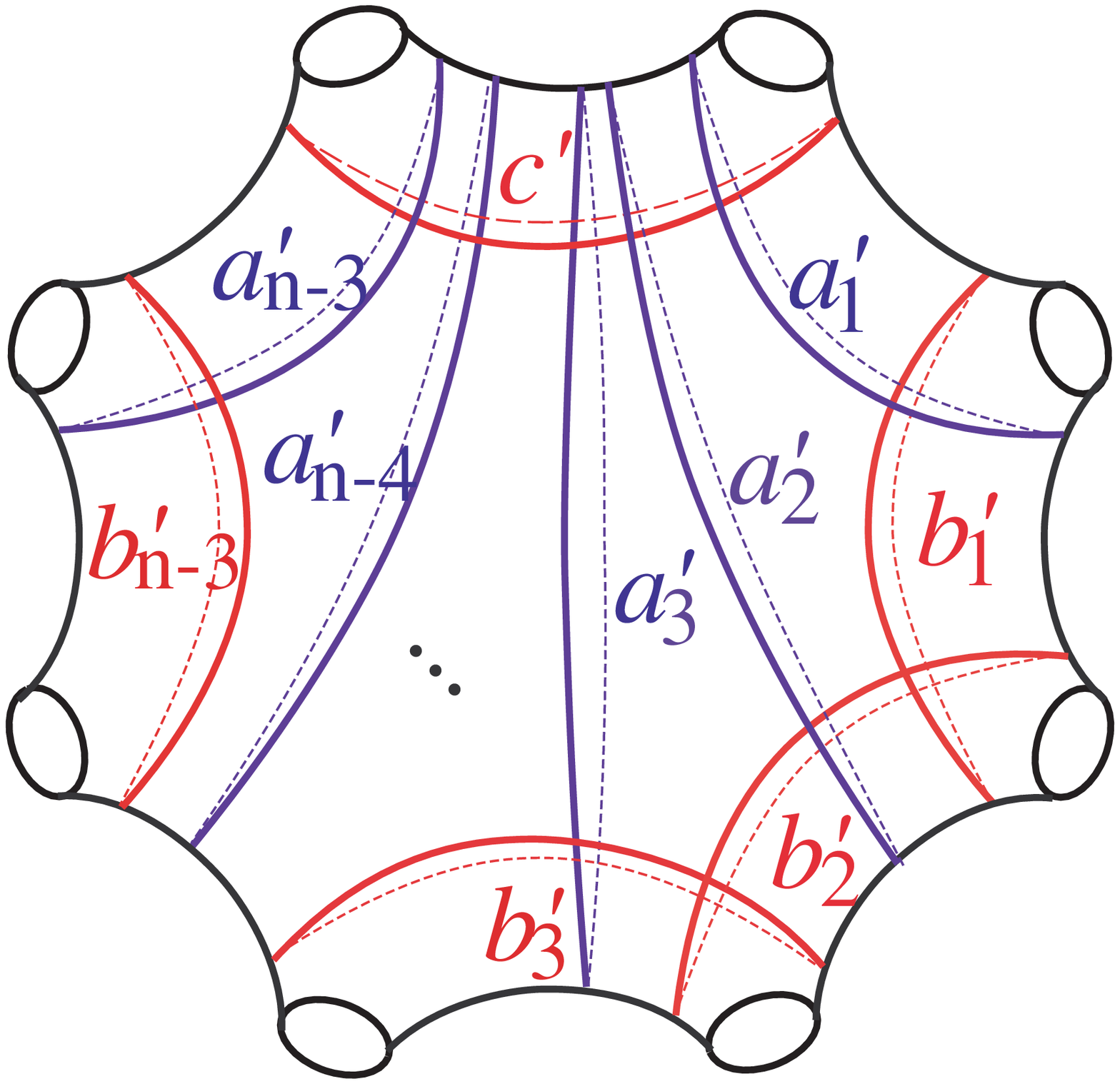}  
		
		(v)   \hspace{5.2cm} (vi)	
		\caption {Curves in $\mathcal{C}_1$} \label{fig-s10}
	\end{center}
\end{figure}

Let $\mathcal{C}_1 = \{a_1, a_2, a_3, \cdots, a_{n-3}, b_1, b_2, b_3, \cdots, b_{n-3}, c\}$ 
where the curves are as shown in Figure \ref{fig-s10} (i). Let $P = \{a_1, a_2, a_3, \cdots, a_{n-3}\}$. Let $P'$ be a pair of pants
decomposition of $R$ such that $\lambda([P]) = [P']$. For all $i$, let $a'_i$ be the representative of $\lambda([a_i])$ in $P'$, $b'_i$ be the representative of $\lambda([b_i])$ that intersects the elements of $P'$ minimally, $c'$ be the representative of $\lambda([c])$ that intersects the elements of $P' \cup \{b'_1, b'_2, b'_3, \cdots, b'_{n-3}\}$ minimally.   
 
\begin{lemma} \label{1} We have $i([a'_i], [b'_i]) \neq 0$, for all $i = 1, 2, \cdots, n-3$.\end{lemma}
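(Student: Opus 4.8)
The plan is to run the strategy of the genus-one analogue, Lemma~\ref{22}, with the four-holed sphere playing the role that the one-holed torus plays there. The single topological input I would invoke throughout is the ``complexity-one'' fact: cutting a four-holed sphere along any essential simple closed curve yields two pairs of pants, so every essential curve disjoint from a fixed essential curve is isotopic to it or to a boundary component.

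\emph{The dichotomy.} Since $b_i$ is disjoint from every curve of $P\setminus\{a_i\}$ (clear from Figure~\ref{fig-s10}(i)), edge-preservation gives $i(\lambda([b_i]),\lambda([a_j]))=0$ for all $j\ne i$, and Lemma~\ref{inj-2} gives $\lambda([b_i])\ne\lambda([a_j])$ for $j\ne i$. Hence $b'_i$ may be isotoped off $P'\setminus\{a'_i\}$; since $R$ has genus zero, the complement of $P'\setminus\{a'_i\}$ is a union of pairs of pants and exactly one four-holed sphere $F'$, and $a'_i$ is a pants curve of $F'$. As $b'_i$ is essential in $R$ it is not isotopic to $\partial F'$ (whose components are curves of $P'\setminus\{a'_i\}$, excluded above, or components of $\partial R$), so it lies in $F'$, and the complexity-one fact gives $\lambda([b_i])=\lambda([a_i])$ or $i([a'_i],[b'_i])\ne 0$. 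It then remains to rule out $\lambda([b_i])=\lambda([a_i])$.

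\emph{Ruling out $\lambda([b_i])=\lambda([a_i])$.} Assume it holds. I would pick an auxiliary curve $x$ (one of the curves drawn in Figure~\ref{fig-s10}) with $i(x,b_i)\ne 0$ and $i(x,a_i)=0$ --- for instance $x=b_{i+1}$ when $i\le n-4$ and $x=b_{i-1}$ when $i=n-3$ --- together with a curve $\gamma$ that, with three components of $\partial R$, bounds a four-holed sphere $N$ having $b_i$ and $x$ among its pants curves; the hypothesis $n\ge 5$ is what makes such a $\gamma$ essential. Cutting $N$ along $b_i$ and choosing any pants decomposition of the complementary $(n-2)$-holed sphere completes $\{\gamma,b_i\}$ to a pants decomposition $Q$ of $R$ with $b_i\in Q$, and then $x$ is disjoint from, and non-isotopic to, every curve of $Q\setminus\{b_i\}$ because $x$ lies in the interior of $N$. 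Applying the dichotomy above to $Q$ (via Lemma~\ref{pd-inj-2}), with $x$ in the role of $b_i$ and $b_i$ in the role of $a_i$, gives $\lambda([x])=\lambda([b_i])$ or $i(\lambda([x]),\lambda([b_i]))\ne 0$. The first is impossible, since it would give $\lambda([x])=\lambda([a_i])$, contradicting Lemma~\ref{inj-2} (as $i(x,a_i)=0$ and $x\ne a_i$); hence $i(\lambda([x]),\lambda([b_i]))\ne 0$. But $i(x,a_i)=0$ forces $i(\lambda([x]),\lambda([a_i]))=0$, so $i(\lambda([x]),\lambda([b_i]))=0$ --- a contradiction. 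Therefore $\lambda([b_i])\ne\lambda([a_i])$, and the dichotomy yields $i([a'_i],[b'_i])\ne 0$ for every $i$.

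The step I expect to require genuine care is the second one: for each $i$, including the boundary cases $i=1$ and $i=n-3$ handled by the mirror picture, one must exhibit the explicit curves $x$ and $\gamma$ and the pants decomposition $Q$, and check the disjointness and non-isotopy claims against the figure. Everything else is a direct transcription of the four-holed-sphere dichotomy and uses only Lemmas~\ref{inj-2} and~\ref{pd-inj-2}; note that $n\ge 5$ enters exactly where the argument has to break (for $n=4$ there is no curve $b_{i\pm1}$ and no essential $\gamma$), consistent with the stated failure of the theorem when $g=0$, $n=4$.
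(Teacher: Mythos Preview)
Your argument is correct and follows essentially the same two-step template as the paper: establish the dichotomy ``$\lambda([b_i])=\lambda([a_i])$ or $i(\lambda([b_i]),\lambda([a_i]))\neq 0$'' from the pants decomposition, then kill the equality option with an auxiliary curve. The paper's execution differs only in the choice of auxiliary: for $i=1$ it uses the curve $c$ (which is disjoint from $b_1$ and satisfies its own dichotomy with $a_1$, so the edge between $\lambda([c])$ and $\lambda([b_1])$ rules out $\lambda([b_1])=\lambda([a_1])$ directly), and for $i\ge 2$ it bootstraps by transporting the $i=1$ conclusion along a homeomorphism sending $(a_1,b_1)$ to $(b_{i-1},b_i)$, obtaining $i(\lambda([b_{i-1}]),\lambda([b_i]))\neq 0$ without building a new pants decomposition. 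Your version trades this inductive bootstrap for a uniform construction of the auxiliary pants decomposition $Q$ at each $i$; this costs you the explicit verification of $\gamma$ and $Q$ that you flag, while the paper's version avoids that by leaning on the homeomorphism and on the curve $c$ already drawn in the figure. Both routes are sound and of comparable length.
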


\begin{proof} We will first show that $i([a'_1], [b'_1]) \neq 0$. We see that $i([b_1], [x]) = 0$ for all $x \in P \setminus \{a_1\}$ and there is an edge between $[b_1]$ and $[x]$ for all $x \in P \setminus \{a_1\}$. Since $\lambda$ is edge preserving we have $i(\lambda([b_1]), \lambda([x])) = 0$ for all $x \in P \setminus \{a_1\}$ and there is an edge between $\lambda([b_1])$ and $\lambda([x])$ for all $x \in P \setminus \{a_i\}$.
This implies that either $i(\lambda([b_1]), \lambda([a_1])) \neq 0$ or $\lambda([b_1]) = \lambda([a_1])$. With a similar argument we can see that 
$i(\lambda([c]), \lambda([a_1])) \neq 0$ or $\lambda([c]) = \lambda([a_1])$. If $\lambda([b_1]) = \lambda([a_1])$, then we couldn't have $i(\lambda([c]), \lambda([a_1])) \neq 0$ or $\lambda([c]) = \lambda([a_1])$ since $\lambda([c])$ and $\lambda([b_1])$ are connected by an edge. So, $i(\lambda([b_1]), \lambda([a_1])) \neq 0$. To see that 
$i(\lambda([b_2]), \lambda([a_2])) \neq 0$ we do the following: We see that $i([b_2], [x]) = 0$ for all $x \in P \setminus \{a_2\}$ and there is an edge between $[b_2]$ and $[x]$ for all $x \in P \setminus \{a_2\}$. Since $\lambda$ is edge preserving we have $i(\lambda([b_2]), \lambda([x])) = 0$ for all $x \in P \setminus \{a_2\}$ and there is an edge between $\lambda([b_2])$ and $\lambda([x])$ for all $x \in P \setminus \{a_2\}$.
This implies that either $i(\lambda([b_2]), \lambda([a_2])) \neq 0$ or $\lambda([b_2]) = \lambda([a_2])$. Since $i(\lambda([b_1]), \lambda([a_1])) \neq 0$ and there is a homeomorphism sending the pair 
$(a_1, b_1)$ to $(b_1, b_2)$ we can see that $i(\lambda([b_1]), \lambda([b_2])) \neq 0$. If $\lambda([b_2]) = \lambda([a_2])$, then we couldn't have $i(\lambda([b_1]), \lambda([b_2])) \neq 0$ since $\lambda([b_1])$ and $\lambda([a_2])$ are connected by an edge. So, $i(\lambda([b_2]), \lambda([a_2])) \neq 0$.
With similar arguments we get $i(\lambda([b_i]), \lambda([a_i])) \neq 0$ for all $i= 1, 2, \cdots, n-3$.
\end{proof} 

\begin{lemma}
\label{adj-f} The curves $a'_i, a'_{i+1}$ are adjacent to each other w.r.t. $P'$ for all $i = 1, 2, \cdots,$ $n-4$.\end{lemma}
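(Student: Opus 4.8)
The plan is to follow the template of Lemma \ref{adj-2}, using Lemma \ref{1} in the role played there by the preservation of ``intersection one'' (which is vacuous on a planar surface). Fix $i\in\{1,2,\dots,n-4\}$, write $\partial_1,\dots,\partial_n$ for the boundary components of $R$, and normalize so that $a_k$ bounds a regular neighbourhood of $\partial_1\cup\cdots\cup\partial_{k+1}$; we may take the curves $b_k$ of $\mathcal{C}_1$ so that $b_k$ bounds a neighbourhood of $(\partial_1\cup\cdots\cup\partial_k)\cup\partial_{k+2}$, which is a curve meeting, among the elements of $P$, exactly $a_k$ (as Lemma \ref{1} requires). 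Let $d_i$ be the curve bounding a neighbourhood of $\partial_{i+1}\cup\partial_{i+3}$. Comparing partitions of $\{\partial_1,\dots,\partial_n\}$ one checks at once that $d_i$ is essential (here the bound $i\le n-4$ is used, so that $\partial_{i+3}$ exists), that among the curves of $P$ the curve $d_i$ meets exactly $a_i$ and $a_{i+1}$ (the only two curves of $P$ that separate $\partial_{i+1}$ from $\partial_{i+3}$), and that $d_i$ is disjoint from both $b_i$ and $b_{i+1}$ (its two enclosed boundary components are both enclosed by $b_{i+1}$ and both disjoint from those enclosed by $b_i$).

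Next I would show $i(\lambda([d_i]),\lambda([a_i]))\neq 0$ and $i(\lambda([d_i]),\lambda([a_{i+1}]))\neq 0$. For the first, $Q:=(P\setminus\{a_{i+1}\})\cup\{b_{i+1}\}$ is a pants decomposition of $R$ and $d_i$ is disjoint from every curve of $Q\setminus\{a_i\}$. A set of pairwise disjoint representatives of $\lambda([Q])$ is a pants decomposition of $R$ (Lemma \ref{pd-inj-2}), and since $\lambda$ is edge preserving, $\lambda([d_i])$ is disjoint from, and distinct from, every curve of this pants decomposition except possibly $\lambda([a_i])$; maximality of a pants decomposition then forces either $i(\lambda([d_i]),\lambda([a_i]))\neq 0$ or $\lambda([d_i])=\lambda([a_i])$. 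The equality is impossible: since $i(b_i,d_i)=0$, edge preservation gives $i(\lambda([b_i]),\lambda([d_i]))=0$, whereas $i(\lambda([b_i]),\lambda([a_i]))=i([a_i'],[b_i'])\neq 0$ by Lemma \ref{1}. Running the same argument with $Q':=(P\setminus\{a_i\})\cup\{b_i\}$ and with $b_{i+1}$ as the witness yields $i(\lambda([d_i]),\lambda([a_{i+1}]))\neq 0$.

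Finally, realize $\lambda([d_i])$ in minimal position with respect to $P'$ and call the result $d_i'$; then $d_i'$ is disjoint from $a_j'$ for every $j\neq i,i+1$ and, by the previous paragraph, the connected curve $d_i'$ meets both $a_i'$ and $a_{i+1}'$. Hence $a_i'$ and $a_{i+1}'$ lie in a single connected component of the surface obtained by cutting $R$ along $\{a_j':j\neq i,i+1\}$, which — since $P'$ is a pants decomposition — means that $a_i'$ and $a_{i+1}'$ lie on the boundary of a common pair of pants of $P'$, i.e., that they are adjacent w.r.t. $P'$. The extreme indices $i=1$ and $i=n-4$ require no separate treatment, as $a_1'$ and $a_{n-3}'$ are still non-peripheral curves of $P'$. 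I expect the two intersection estimates of the middle paragraph to be the only real content: the rest is either a direct application of the edge-preserving hypothesis or the elementary cut-surface argument already implicit in Section 2, while the construction of $d_i$ with the required intersection and disjointness properties is the kind of planar-curve bookkeeping that must be read off from the figure.
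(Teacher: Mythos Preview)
Your argument is correct and follows essentially the same template as the paper's proof: choose a curve meeting, among the elements of $P$, exactly $a_i$ and $a_{i+1}$, then use the auxiliary pants decompositions $(P\setminus\{a_{i+1}\})\cup\{b_{i+1}\}$ and $(P\setminus\{a_i\})\cup\{b_i\}$ together with Lemma~\ref{1} as the witness to rule out the ``equal image'' alternative. The only difference is cosmetic: the paper uses a curve $z_i$ from the figure in place of your $d_i$, and for the boundary case $i=1$ it replaces one of the two pants-decomposition arguments by the observation that there is a homeomorphism sending $(a_2,b_2)$ to $(z_1,a_1)$, so that Lemma~\ref{1} (which really only depends on the topological type of the pair) gives $i(\lambda([z_1]),\lambda([a_1]))\neq 0$ directly; your uniform treatment avoids this shortcut and is arguably cleaner.
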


\begin{proof} We will first prove that $a'_1, a'_2$ are adjacent to each other w.r.t. $P'$. Let $z_1$ be the curve shown in Figure \ref{fig-s10} (ii). The set $Q = (P \setminus \{a_1\}) \cup \{b_1\}$ is a pants decomposition on $R$. We see that $i([z_1], [x]) = 0$ for all $x \in Q \setminus \{a_2\}$ and there is an edge between $[z_1]$ and $[x]$ for all $x \in Q \setminus \{a_2\}$. Since $\lambda$ is edge preserving we have $i(\lambda([z_1]), \lambda([x])) = 0$ for all $x \in Q \setminus \{a_2\}$ and there is an edge between $\lambda([z_1])$ and $\lambda([x])$ for all $x \in Q \setminus \{a_2\}$. This implies that either $i(\lambda([z_1]), \lambda([a_2])) \neq 0$ or $\lambda([z_1]) = \lambda([a_2])$. Since $i(\lambda([z_1]), \lambda([b_2])) = 0$ and $i(\lambda([a_2]), \lambda([b_2])) \neq 0$ by Lemma \ref{1}, we cannot have $\lambda([z_1]) = \lambda([a_2])$. So, 
$i(\lambda([z_1]), \lambda([a_2])) \neq 0$. 
Since $i(\lambda([a_2]), \lambda([b_2])) \neq 0$ by Lemma \ref{1} and there is a homeomorphism sending the pair 
$(a_2, b_2)$ to $(z_1, a_1)$ we can see that $i(\lambda([z_1]), \lambda([a_1])) \neq 0$. Since 
$i(\lambda([z_1]), \lambda([a_1])) \neq 0$ and 
$i(\lambda([z_1]), \lambda([a_2])) \neq 0$ and 
$i(\lambda([z_1]), \lambda([x])) = 0$ for all 
$x \in P \setminus \{a_1, a_2\}$, we see that  $a'_1, a'_2$ are adjacent to each other w.r.t. $P'$. 

Consider the curve $z_2$ given in Figure \ref{fig-s10} (iii). 
The set $T = (P \setminus \{a_2\}) \cup \{b_2\}$ is a pants decomposition on $R$. We see that $i([z_2], [x]) = 0$ for all $x \in T \setminus \{a_3\}$ and there is an edge between $[z_2]$ and $[x]$ for all $x \in T \setminus \{a_3\}$. Since $\lambda$ is edge preserving we have $i(\lambda([z_2]), \lambda([x])) = 0$ for all $x \in T \setminus \{a_3\}$ and there is an edge between $\lambda([z_2])$ and $\lambda([x])$ for all $x \in T \setminus \{a_3\}$. This implies that either $i(\lambda([z_2]), \lambda([a_3])) \neq 0$ or $\lambda([z_2]) = \lambda([a_3])$. Since $i(\lambda([z_2]), \lambda([b_3])) = 0$ and $i(\lambda([a_3]), \lambda([b_3])) \neq 0$ by Lemma \ref{1}, we cannot have $\lambda([z_2]) = \lambda([a_3])$. So, 
$i(\lambda([z_2]), \lambda([a_3])) \neq 0$. 
The set $V = (P \setminus \{a_3\}) \cup \{b_3\}$ is a pants decomposition on $R$. We see that $i([z_2], [x]) = 0$ for all $x \in V \setminus \{a_2\}$ and there is an edge between $[z_2]$ and $[x]$ for all $x \in V \setminus \{a_2\}$. Since $\lambda$ is edge preserving we have $i(\lambda([z_2]), \lambda([x])) = 0$ for all $x \in V \setminus \{a_2\}$ and there is an edge between $\lambda([z_2])$ and $\lambda([x])$ for all $x \in V \setminus \{a_2\}$. This implies that either $i(\lambda([z_2]), \lambda([a_2])) \neq 0$ or $\lambda([z_2]) = \lambda([a_2])$. Since $i(\lambda([z_2]), \lambda([b_2])) = 0$ and $i(\lambda([a_2]), \lambda([b_2])) \neq 0$ by Lemma \ref{1}, we cannot have $\lambda([z_2]) = \lambda([a_2])$. So, 
$i(\lambda([z_2]), \lambda([a_2])) \neq 0$.  Since 
$i(\lambda([z_2]), \lambda([a_2])) \neq 0$ and 
$i(\lambda([z_2]), \lambda([a_3])) \neq 0$ and 
$i(\lambda([z_2]), \lambda([x])) = 0$ for all 
$x \in P \setminus \{a_2, a_3\}$, we see that  $a'_2, a'_3$ are adjacent to each other w.r.t. $P'$. Similarly, $a'_i, a'_{i+1}$ are adjacent to each other w.r.t. $P'$ for all $i = 1, 2, \cdots,$ $n-4$.\end{proof}

\begin{lemma} \label{nonadj-f} 
If $x, y \in P$ and $x$ is not adjacent to $y$ w.r.t. $P$, then $\lambda([x]), \lambda([y])$ have representatives in $P'$ which are not adjacent to each other w.r.t. $P'$.\end{lemma}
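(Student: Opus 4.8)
The plan is to follow the template of Lemma~\ref{nonadj-2}: for a pair $a_i,a_j\in P$ that is non-adjacent w.r.t.\ $P$, I will produce two \emph{disjoint} curves $w,t$ on $R$ such that $i(\lambda([w]),\lambda([a_i]))\neq 0$ and $i(\lambda([w]),\lambda([a_k]))=0$ for every $k\neq i$, such that $i(\lambda([t]),\lambda([a_j]))\neq 0$ and $i(\lambda([t]),\lambda([a_k]))=0$ for every $k\neq j$, and such that $i(\lambda([w]),\lambda([t]))=0$; and then I will argue that such a configuration is incompatible with $a'_i$ and $a'_j$ cobounding a pair of pants of $P'$. Since the representative of $\lambda([a_i])$ in $P'$ is exactly $a'_i$, this gives the statement.

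For the first task the natural candidates are $w=b_i$ and $t=b_j$ from the configuration $\mathcal{C}_1$. By construction $b_i$ is disjoint from every $a_k$ with $k\neq i$, so, $\lambda$ being edge preserving, $i(\lambda([b_i]),\lambda([a_k]))=0$ for all $k\neq i$; Lemma~\ref{1} gives $i(\lambda([a_i]),\lambda([b_i]))\neq 0$ and $i(\lambda([a_j]),\lambda([b_j]))\neq 0$; and if $a_i$ and $a_j$ are non-adjacent w.r.t.\ $P$, then an inspection of Figure~\ref{fig-s10} shows that $b_i$ and $b_j$ are disjoint, so once more by edge preservation $i(\lambda([b_i]),\lambda([b_j]))=0$. (For any non-adjacent pair not literally covered by the $b$'s one uses instead a curve dual to $a_i$ alone, resp.\ $a_j$ alone, drawn in Figure~\ref{fig-s10}, whose required intersection pattern with $\lambda([P'])$ follows from the same edge-preservation plus Lemma~\ref{1} argument; this is the "similarly\dots'' step.)

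It remains to extract non-adjacency of $a'_i,a'_j$ from such $w,t$. Realize $P'$, $w$ and $t$ simultaneously in minimal position, e.g.\ as geodesics $P',w',t'$ in a hyperbolic metric on $R$; then $w'\cap t'=\emptyset$, the curve $w'$ meets $P'$ only along $a'_i$, and $t'$ meets $P'$ only along $a'_j$. Suppose, for contradiction, that $a'_i$ and $a'_j$ cobound a pair of pants $\Pi$ of $P'$. Since $w'$ crosses $a'_i$ (at least twice) and is disjoint from the remaining boundary circles of $\Pi$, the curve $w'$ contains at least one essential arc $\delta_w\subset\Pi$ with both endpoints on $a'_i$; likewise $t'$ contains an essential arc $\delta_t\subset\Pi$ with both endpoints on $a'_j$, and $\delta_w\cap\delta_t=\emptyset$. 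Cutting $\Pi$ along $\delta_w$ yields, since $\delta_w$ is essential and $\Pi$ is planar, two annuli, one of which has $a'_j$ as a boundary circle; as $\delta_t$ is disjoint from $\delta_w$ and has both endpoints on $a'_j$, it lies inside that annulus with both endpoints on one of its boundary circles, hence is boundary-parallel, contradicting essentiality. Therefore $a'_i$ and $a'_j$ are not adjacent w.r.t.\ $P'$, which proves the lemma. I expect the main work to be not in any single step but in the pair-by-pair verification that a disjoint dual pair exists for \emph{every} non-adjacent pair in $P$ (the content hidden behind "similarly\dots'' in Lemma~\ref{nonadj-2}), together with bookkeeping for degenerate shapes of $\Pi$ (for instance when $a'_i$ bounds $\Pi$ on both sides); the arc argument above adapts verbatim to those cases, since an essential arc of $\Pi$ based on one boundary circle always separates off that circle or a different circle into an annulus, into which the other arc would then be forced and trivialized.
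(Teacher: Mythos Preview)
Your proposal is correct and follows exactly the paper's approach: for each non-adjacent pair $a_i,a_j\in P$ the paper uses $w=b_i$, $t=b_j$, invoking Lemma~\ref{1} for the nonzero intersections and edge preservation for the vanishing ones, just as you do. Your added arc argument in the putative pair of pants $\Pi$ is a correct justification of the ``it is enough to find\ldots'' step that the paper leaves implicit; note that in genus~$0$ every non-adjacent pair is indeed covered by $b_i,b_j$ (your hedging about other dual curves is unnecessary here) and the degenerate case where a curve bounds $\Pi$ on both sides cannot occur since every curve on a planar surface is separating.
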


\begin{proof} It is enough to find two disjoint curves $w, t$ such that $w$ intersects only $x$ nontrivially and not the other curves in $P$, $t$ intersects only $y$ nontrivially and not the other curves in $P$ and $i(\lambda([w]), \lambda([x])) \neq 0$, $i(\lambda([t]), \lambda([y])) \neq 0$,  $i(\lambda([w]), \lambda([q])) = 0$, for all $q \in P \setminus \{ x\}$, $i(\lambda([t]), \lambda([q])) = 0$, for all $q \in P \setminus \{y\}$, $i(\lambda([t]), \lambda([w])) = 0$. By using Lemma \ref{1}, we can see that for the pair $a_i, a_j$, that are not adjacent to each other w.r.t. $P$, the curves $b_i, b_j$ 
would satisfy the above properties. So, we see that nonadjacency is preserved for every nonadjacent pair in $P$.\end{proof}

\begin{lemma} \label{curves-s} There exists a 
homeomorphism $h: R \rightarrow R$ such that $h([x]) = \lambda([x])$ 
$\forall \ x \in P= \{a_1, a_2, \cdots, a_{n-3}\}$.\end{lemma}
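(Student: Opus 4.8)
The plan is to use the adjacency and non-adjacency information already established for $P'$ to recognize $P'$ as a chain pants decomposition of $R$ with the same labelled combinatorial type as $P$, and then to invoke the change of coordinates principle to produce $h$. Recall that $P = \{a_1, \dots, a_{n-3}\}$ is the standard chain decomposition of the $n$-holed sphere: its adjacency graph (vertices the curves of $P$, an edge joining two curves precisely when they lie on a common pair of pants) is the path $a_1 - a_2 - \cdots - a_{n-3}$, and in the dual tree each of the two pairs of pants at the ends of the chain carries two boundary components of $R$ while every other pair of pants carries exactly one.

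First I would note that by Lemma \ref{pd-inj-2} the set $P' = \{a'_1, \dots, a'_{n-3}\}$ is a pants decomposition of $R$, and by Lemma \ref{inj-2} the assignment $x \mapsto \lambda([x])$ is a bijection of $[P]$ onto $[P']$. By Lemma \ref{adj-f} the curves $a'_i$ and $a'_{i+1}$ are adjacent with respect to $P'$ for $1 \le i \le n-4$, and by Lemma \ref{nonadj-f} the curves $a'_i$ and $a'_j$ are non-adjacent with respect to $P'$ whenever $|i - j| \ge 2$. Hence the adjacency graph of $P'$ is exactly the path $a'_1 - a'_2 - \cdots - a'_{n-3}$.

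Next I would analyse the dual tree $G'$ of $P'$: its vertices are the $n-2$ pairs of pants of $R_{P'}$, it has one internal edge for each of the $n-3$ curves of $P'$ and one leg for each of the $n$ boundary components of $R$, and it is a tree since $R$ is connected of genus $0$. No pair of pants of $R_{P'}$ has all three of its boundary curves in $P'$: for $n = 5$ there are only two curves in $P'$, and for $n \ge 6$ such a pair of pants would produce a triangle in the adjacency graph of $P'$, contradicting that this graph is a path. So every pair of pants carries at least one boundary component of $R$; since there are $n-2$ pairs of pants and $n$ boundary components, exactly two pairs of pants carry two boundary components of $R$ — these are the leaves of $G'$ — and the remaining $n-4$ carry exactly one and have degree two in $G'$. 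Therefore $G'$ is a path, $P'$ is a chain decomposition of $R$, and the ordering $a'_1, \dots, a'_{n-3}$ along the adjacency graph of $P'$ corresponds to the ordering $a_1, \dots, a_{n-3}$ along that of $P$. Finally, since $P$ and $P'$ are chain pants decompositions of the same surface $R$ with identical labelled combinatorial data, the change of coordinates principle (see \cite{FM}) yields a homeomorphism $h : R \to R$ with $h(a_i) = a'_i$ for all $i$, i.e. $h([a_i]) = \lambda([a_i])$ for all $i = 1, \dots, n-3$.

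I expect the middle paragraph to be the main obstacle: one must verify carefully that the adjacency and non-adjacency data genuinely force $G'$ to be a path with the same distribution of boundary legs as the dual tree of $P$ — ruling out trivalent pairs of pants, pinning down the two end pairs of pants, and handling the small case $n = 5$ — since the labelled form of the change of coordinates principle only applies once this combinatorial type has been matched exactly.
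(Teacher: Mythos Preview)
Your argument is correct and is precisely the content the paper leaves implicit: the paper's proof consists of the single line ``The proof follows from Lemma \ref{adj-f} and Lemma \ref{nonadj-f}, see Figure \ref{fig-s10} (iv),'' and your dual-tree analysis spells out exactly why preservation of adjacency and non-adjacency forces $P'$ to be a chain pants decomposition with the same labelled combinatorics as $P$, after which change of coordinates gives $h$.
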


\begin{proof} The proof follows from Lemma \ref{adj-f} and Lemma \ref{nonadj-f}, see Figure \ref{fig-s10} (iv).\end{proof} 

\begin{lemma} \label{2} Let $i([a'_1], [b'_1]) =2, i([a'_{n-3}], [b'_{n-3}]) =2, i([a'_{n-3}], [c']) =2, i([c'], [a'_1]) =2$ and $i([b'_i], [b'_{i+1}]) =2  $, for all $i = 1, 2, \cdots, n-4$.\end{lemma}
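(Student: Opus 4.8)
The plan is to upgrade the non-vanishing statements already available to the exact value $2$. The lower bound is automatic: $R$ has genus zero, so every simple closed curve on $R$ separates and hence every geometric intersection number on $R$ is even; since Lemma \ref{1} gives $i([a'_i],[b'_i])\neq 0$ and the proof of Lemma \ref{1} also yields $i(\lambda([b_i]),\lambda([b_{i+1}]))\neq 0$, $i(\lambda([c]),\lambda([a_1]))\neq 0$ and $i(\lambda([c]),\lambda([a_{n-3}]))\neq 0$, each number in the statement is $\geq 2$, so the whole content of the lemma is the upper bound. Before attacking it I would normalize: by Lemma \ref{curves-s} there is a homeomorphism $h$ with $h([a_i])=\lambda([a_i])$ for all $i$, and replacing $\lambda$ by $h^{-1}_{*}\circ\lambda$ (still an edge-preserving self-map of $\mathcal{C}(R)$, and changing none of the relevant intersection numbers since $h^{-1}_{*}$ preserves geometric intersection numbers) I may assume $\lambda([a_i])=[a_i]$ for all $i$, so $P'=P$ and $a'_i=a_i$.

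Next I would localize each image curve. As $b_i$ is disjoint from every $a_j$ with $j\neq i$ and $\lambda$ is edge preserving, $b'_i$ --- being in minimal position with respect to $P'=P$ --- is disjoint from every $a_j$ with $j\neq i$; being essential, non-peripheral and meeting $a_i$ by Lemma \ref{1}, it is an essential curve of the four-holed sphere $\Sigma_i$ cut off from $R$ by $\{a_j:j\neq i\}$ and containing $a_i$, so $i(a_i,b'_i)\in\{2,4,6,\dots\}$. Likewise $b'_i$ and $b'_{i+1}$ both lie in the five-holed sphere cut off by $\{a_j:j\neq i,i+1\}$ and containing $a_i$ and $a_{i+1}$, and $c'$ is confined by the $a_j$'s (and the $b'_j$'s) that $c$ misses. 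In each case the pair of curves in question lives in a small planar subsurface on which one member is a fixed curve of a pants decomposition, and the task reduces to showing the other member is in \emph{minimal}, not merely some, position with respect to it, i.e.\ that its slope in the relevant four- or five-holed sphere realizes intersection $2$ rather than a larger even number.

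To achieve this I would feed in the auxiliary curves already built in the proofs of Lemmas \ref{adj-f} and \ref{nonadj-f} (the curves $z_i$, the $b_i$, and the like in Figure \ref{fig-s10}): each is disjoint from the relevant $b_i$ or from $c$ yet meets one or two of the $a_j$'s essentially, so by arguments of the same type its image is disjoint from $b'_i$ (resp.\ $c'$) and meets the corresponding $a_j$'s essentially, and this pins the slope down to the one realizing intersection $2$. Alternatively one can run a single Euler-characteristic count: by Lemma \ref{1} and arguments of the same type the image configuration $\{a'_i,b'_i,c'\}$ has the same disjointness pattern as $\mathcal{C}_1$, the union of the originals fills $R$, and once one checks that the union of the images still fills $R$, capping off $\partial R$ to $S^2$ turns $V-E+F=2$ with $E=2V$ into an exact formula for the number $V$ of crossings, and $V$ matches the original count only if every pairwise intersection is as small as possible, namely $2$. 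The main obstacle is exactly this last step: excluding residual Dehn twisting of $b'_1$, $b'_{n-3}$ and $c'$ inside their confining four- and five-holed subsurfaces. The even lower bound, the normalization and the subsurface confinement are all formal; eliminating the twisting uniformly for every $n\geq 5$ --- whether via the explicit curves of Figure \ref{fig-s10} or a careful filling argument --- is where the real work lies.
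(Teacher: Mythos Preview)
Your reduction is correct up to the point where you say ``this pins the slope down'': the parity observation gives the lower bound $\geq 2$, the normalization via Lemma \ref{curves-s} is legitimate, and the confinement of each $b'_i$ to the four-holed sphere $\Sigma_i$ is exactly right. But neither of your two suggested mechanisms for the upper bound actually closes the argument.

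The Euler-characteristic route does not work as stated. Once the image configuration fills and you cap off to $S^2$, the relation $V-E+F=2$ with $E=2V$ gives $F=V+2$ for \emph{any} value of $V$; nothing in that identity forces $V$ to equal the original crossing count. Extra Dehn twisting of $b'_1$ around $a_1$ simply increases both $V$ and $F$ by the same amount, and the formula is happy. You would need an independent upper bound on $F$, which you do not have.

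The ``feed in the $z_i$'' route is closer in spirit to what the paper does, but knowing that $z'_1$ misses $b'_1$ and hits $a_1,a_2$ is not by itself enough to determine the slope of $b'_1$ in $\Sigma_1$: in a four-holed sphere there are infinitely many essential curves disjoint from a given one. What the paper actually does is assemble the five classes $a'_1,b'_2,z'_1,b'_1,a'_2$ into a \emph{pentagon} in the curve complex of the five-holed sphere cut off by $a'_3$ (each consecutive pair disjoint, each non-consecutive pair intersecting), together with the topological-type information that $a'_1,b'_2$ cut off pairs of pants while $a'_2,z'_1$ cut off four-holed spheres. It then invokes Korkmaz's Theorem~3.2 from \cite{K1}, which says precisely that such a pentagon forces the diagonal intersection numbers to be $2$. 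That theorem is the missing key lemma in your outline; without it (or a substitute of comparable strength) the twisting ambiguity you flag at the end is not resolved.
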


\begin{proof} We will give the proof when $n \geq 6$. The proof is similar when $n = 5$. We will first show that $i([a'_1], [b'_1]) =2$. 
Consider the curves given in Figure \ref{fig-s10} (v). By Lemma \ref{curves-s}, we have $h([x]) = \lambda([x])$ $\forall \ x \in \{a_1, a_2, \cdots,$ $ a_{n-3}\}$. Let $a'_i$ be as shown in Figure \ref{fig-s10} (iv). Let $M'$ be the connected component of $R_{a'_3}$ (cut surface along $a'_3$) bounded by $a'_3$ and four boundary components of $R$ containing $a'_1$. Let $z'_1$ be a representative of $\lambda([z_1])$ which intersects minimally with all the elements in $\{a'_1, a'_2, a'_3, b'_1, b'_2\}$. By Lemma \ref{1}, we have $i([a'_1], [b'_1]) \neq 0$. 
Since there exists a homeomorphism sending the pair $(a_1, b_1)$ to 
$(b_1, b_2)$ and $i([a'_1], [b'_1]) \neq 0$, by using similar curve configurations we see that $i([b'_1], [b'_2]) \neq 0$. By Lemma \ref{1}, we have $i([a'_2], [b'_2]) \neq 0$. In the proof of Lemma \ref{adj-f}, we showed that $i([z'_1], [a'_1]) \neq 0$ and $i([z'_1], [a'_2]) \neq 0$. By using the intersection information for each pair of curves in $\{a'_1, a'_2, a'_3, b'_1, b'_2, z'_1\}$ and using that $\lambda$ is edge preserving we can see that the curves $a'_1, b'_2, z'_1, b'_1, a'_2$ form a pentagon in $C(R)$, see \cite{K1}. Since $a'_1$ is a curve that separates a pair of pants and there is a homeomorphism sending $a_1$ to $b_2$, by using similar curve configurations we can see that $b'_2$ is a curve that separates a pair of pants. Since $a'_2$ is a curve that separates a genus zero surface with four boundary components and there is a homeomorphism sending $a_2$ to $z_1$, we see that $z'_1$ is a curve that separates a genus zero surface with four boundary components on $R$. Using all this information about these curves and using Korkmaz's Theorem 3.2 in \cite{K1}, we get $i([a'_1], [b'_1]) =2$. 
Since for each of the remaining pair $(x, y)$ in the statement of the lemma there exists a homeomorphism sending the pair $(a_1, b_1)$ to $(x, y)$ and $i([a'_1], [b'_1]) =2$, by using similar curve configurations we get $i([x], [y]) =2$.\end{proof}\\
  
If $f: R \rightarrow R$ is a homeomorphism, then we will use the same notation for $f$ and $[f]$. Recall that $\mathcal{C}_1 = \{a_1, a_2, a_3, \cdots, a_{n-3}, b_1, b_2, b_3, \cdots, b_{n-3}, c\}$ 
where the curves are as shown in Figure \ref{fig-s10} (i). Let $\mathcal{C}_2 = \{w_1, w_2, \cdots, w_n, r_1, r_2, \cdots, r_n\}$ where the curves are as shown in Figure \ref{fig-s11}.
 
\begin{lemma} \label{curves-main} There exists a 
	homeomorphism $h: R \rightarrow R$ such that $h([x]) = \lambda([x])$ 
	$\forall \ x \in \mathcal{C}_1$.\end{lemma}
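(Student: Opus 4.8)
The plan is to bootstrap the identification from Lemma~\ref{curves-s}. Let $h_0:R\to R$ be the homeomorphism it provides, so that $h_0([a_i])=\lambda([a_i])$ for every $i$. Since the induced map $(h_0)_*$ is an automorphism of $\mathcal{C}(R)$, the composite $(h_0)_*^{-1}\circ\lambda$ is again an edge preserving map that now fixes each $[a_i]$, and it is enough to prove the lemma for this composite: if $h$ works for it, then $h_0\circ h$ works for $\lambda$. So from now on I would assume $\lambda([a_i])=[a_i]$ for all $i$, take $P'=P$ in the notation preceding Lemma~\ref{1}, and aim to show that, after possibly one further normalization of this type, $\lambda$ also fixes $[b_1],\dots,[b_{n-3}]$ and $[c]$.

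The next step is to transfer all of the intersection data to $\lambda(\mathcal{C}_1)$. Edge preservation keeps every disjointness relation that holds among the curves of $\mathcal{C}_1$, and Lemmas~\ref{1} and~\ref{2} supply the nonzero intersection numbers, all equal to $2$: namely $i([a_i],[b'_i])=2$, $i([b'_i],[b'_{i+1}])=2$, and $i([a_1],[c'])=i([a_{n-3}],[c'])=2$. Running the pentagon argument together with Korkmaz's Theorem~3.2 of~\cite{K1}, exactly as in the proof of Lemma~\ref{2}, also shows that each $b'_i$ (respectively $c'$) has the same topological type as $b_i$ (respectively $c$): it cuts off a pair of pants two of whose boundary components are boundary components of $R$ (respectively, it is the corresponding separating curve). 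In short, $\lambda(\mathcal{C}_1)$ realizes, curve for curve, the same intersection pattern and the same topological types as $\mathcal{C}_1$ itself.

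To recover the homeomorphism I would then argue, in the style of Lemma~\ref{curves}, that this combinatorial data pins down $\lambda$ on $\mathcal{C}_1$. The system $\mathcal{C}_1$ fills $R$ (the $a_i$ form a pants decomposition and the $b_i$ and $c$ are transverse to it), so a change-of-coordinates (Alexander-method) argument shows that any curve system with the same intersection pattern and topological types is carried onto $\mathcal{C}_1$ by a homeomorphism of $R$; concretely one first uses the four-holed sphere $M$ cobounded by $a_2$ and three boundary components of $R$ (which contains $b'_1$) together with $i([a_1],[b'_1])=2$ to locate $\lambda([b_1])$, then propagates along the chain via the relations $i([b'_i],[b'_{i+1}])=2$ and the disjointness relations, and finally locates $\lambda([c])$. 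This determines $\lambda$ on $\mathcal{C}_1$ up to the orientation-reversing involution $\phi$ of $R$ that fixes every $[a_i]$; as with the pair $r_1,w_1$ in Lemma~\ref{curves}, one checks that $\lambda$ does not identify the two candidates, so after replacing $\lambda$ by $\lambda\circ\phi_*$ if necessary we get $\lambda([x])=[x]$ for all $x\in\mathcal{C}_1$, and undoing the normalizations yields the desired $h$ (equal to $h_0$ or $h_0\circ\phi$).

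I expect the identification in the last step to be the main obstacle. Fixing the classes $[a_i]$ leaves every Dehn twist $t_{a_i}$ in the stabilizer, so the disjointness relations alone cannot locate $\lambda([b_1])$; one has to exploit the exact value $2$ coming from Lemma~\ref{2} and the Korkmaz-type recognition of topological type to cut the candidates down to the $\phi$-pair, and then verify that a single composition with $\phi_*$ removes the ambiguity consistently for $b_1,\dots,b_{n-3}$ and $c$ all at once.
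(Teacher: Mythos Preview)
Your approach is essentially the same as the paper's: use Lemma~\ref{curves-s} to realize the $a_i$'s by a homeomorphism, then use Lemma~\ref{2} and edge-preservation to extend to the $b_i$'s and $c$ via a change-of-coordinates/Alexander-method argument. The paper's own proof is a single sentence (``follows from Lemma~\ref{curves-s}, Lemma~\ref{2} and using that $\lambda$ is edge preserving, see Figure~\ref{fig-s10}~(vi)''), so your expansion is a reasonable fleshing-out of that sketch.

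One point is off, however. You introduce an orientation-reversing involution $\phi$ fixing each $[a_i]$ and propose to resolve a two-fold ambiguity in $\lambda|_{\mathcal{C}_1}$ by composing with $\phi_*$. But the paper's own $\phi$ (introduced in the proof of Lemma~\ref{curves-ii-main}) is explicitly stated to fix the isotopy class of \emph{every} curve in $\mathcal{C}_1$; it only swaps $[r_1]$ and $[w_1]$, which lie in $\mathcal{C}_2$. So for $\mathcal{C}_1$ there is no $\phi$-ambiguity to resolve, and your ``if necessary'' normalization is vacuous at this stage. The paper accordingly defers the $\phi$-adjustment to Lemma~\ref{curves-ii-main}, not to the present lemma. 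Your final paragraph also understates the residual freedom after fixing the $a_i$'s: the stabilizer contains all Dehn twists $t_{a_i}$ (not just an involution), so merely knowing $i([a_1],[b'_1])=2$ and the separation type does not cut the candidates for $b'_1$ down to two. What actually pins things down is the full chain of intersection-two relations $i([b'_i],[b'_{i+1}])=2$ together with the Korkmaz pentagon input, exactly as you invoke; this is what makes the global change-of-coordinates go through, and it does so without any $\phi$-step.
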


\begin{proof} The proof follows from Lemma \ref{curves-s}, Lemma \ref{2} and using that $\lambda$ is edge preserving, see Figure \ref{fig-s10} (vi).\end{proof}

\begin{figure}
	\begin{center}  \epsfxsize=2.2in \epsfbox{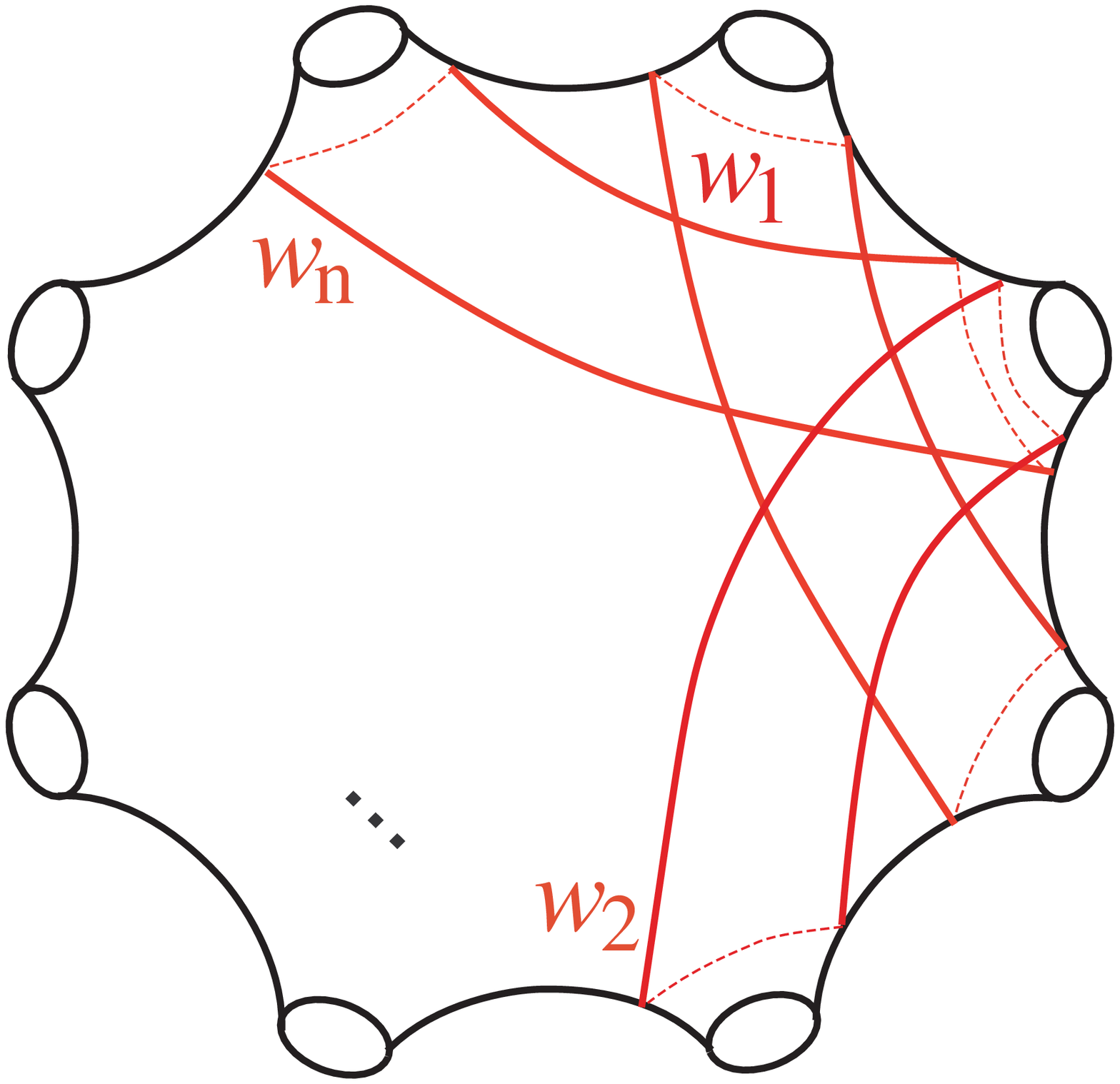} \hspace{0.2cm} 
		\epsfxsize=2.2in \epsfbox{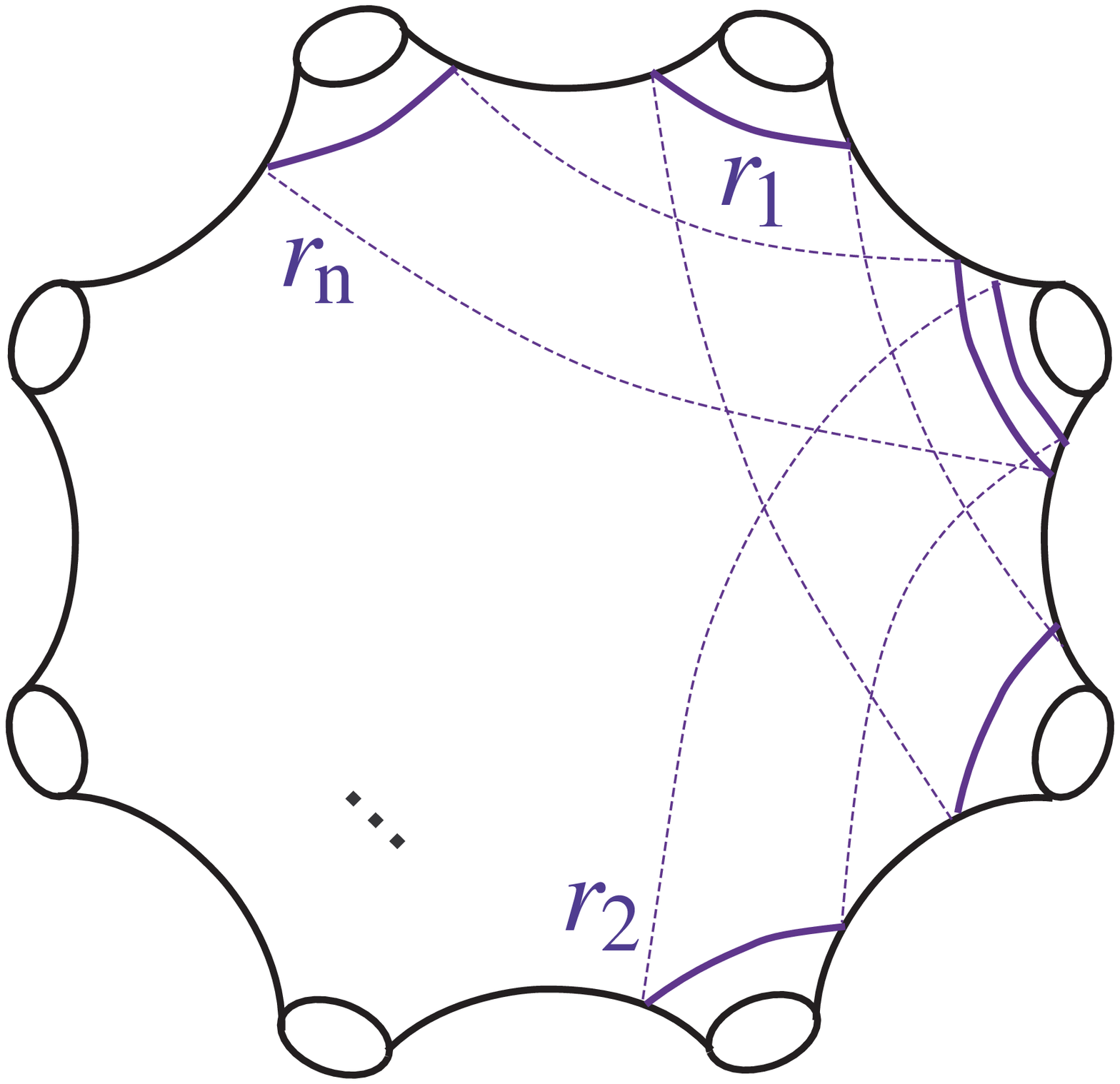}
		
		(i)   \hspace{5.2cm} (ii)  
		
		\caption {Curves in $\mathcal{C}_2$} \label{fig-s11}
	\end{center}
\end{figure}

\begin{lemma} \label{curves-ii-main} There exists a 
	homeomorphism $h: R \rightarrow R$ such that $h([x]) = \lambda([x])$ 
	$\forall \ x \in \mathcal{C}_1 \cup \mathcal{C}_2$.\end{lemma}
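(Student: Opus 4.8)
The plan is to adapt, to the genus zero setting, the argument used to prove Lemma \ref{curves} in Section 2. Let $h$ be the homeomorphism produced by Lemma \ref{curves-main}, so that $h([x]) = \lambda([x])$ for every $x \in \mathcal{C}_1$. Replacing $\lambda$ by $h^{-1}\circ\lambda$ (identifying $h$ with the automorphism of $\mathcal{C}(R)$ it induces; the result is again an edge preserving self-map of $\mathcal{C}(R)$, so every conclusion proved earlier for an arbitrary such map still applies), we may assume that $\lambda$ fixes the isotopy class of every curve in $\mathcal{C}_1$. It then suffices to prove that, after possibly composing $\lambda$ with one more suitable finite-order mapping class, $\lambda$ also fixes the isotopy class of every curve in $\mathcal{C}_2$, since undoing the initial replacement then gives a homeomorphism agreeing with $\lambda$ on $\mathcal{C}_1\cup\mathcal{C}_2$.

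First I would record the principle used repeatedly: if $w$ is a vertex of $\mathcal{C}(R)$ which is the \emph{unique} nontrivial simple closed curve, up to isotopy, disjoint from every member of a finite set $S$ of curves each of whose isotopy classes is already known to be fixed by $\lambda$, then $\lambda([w]) = [w]$. Indeed, since $\lambda$ is edge preserving, $\lambda([w])$ has geometric intersection zero with $\lambda([s])=[s]$ for every $s\in S$, hence is again disjoint from all of $S$, hence equals $[w]$ by uniqueness. When disjointness from already-fixed curves does not by itself pin down $w$, I would enlarge the defining list with a condition of the form ``has geometric intersection exactly two with $c$'' (or with some $a_i$ or $b_i$): by Lemma \ref{2}, together with the preservation of the topological type of a curve that follows from Lemma \ref{pd-inj-2} and Lemmas \ref{adj-f}--\ref{nonadj-f} (a curve bounding a pair of pants, resp.\ a four-holed sphere, is carried to a curve of the same type), these extra conditions are also preserved by $\lambda$, and the same uniqueness argument applies.

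With these tools in hand I would run through the curves of $\mathcal{C}_2=\{w_1,\dots,w_n,r_1,\dots,r_n\}$ in the order displayed in Figure \ref{fig-s11}, at each stage choosing the defining set $S$ to consist only of curves from $\mathcal{C}_1$ together with members of $\mathcal{C}_2$ already shown to be fixed, so that the induction closes up. As with the pair $(r_1,w_1)$ in Lemma \ref{curves}, there is an order-two homeomorphism $\phi$ of $R$ fixing the isotopy class of every curve in $\mathcal{C}_1$ but interchanging a symmetric pair in $\mathcal{C}_2$; at the first step where the defining relations admit two solutions I would show (just as in Lemma \ref{curves}, by producing an auxiliary curve whose $\lambda$-image is forced to intersect one candidate and not the other) that $\lambda$ does not collapse these two candidates, and then replace $\lambda$ by $\lambda\circ\phi_{*}$ if necessary so that $\lambda$ fixes each of them. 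The remaining curves of $\mathcal{C}_2$ are then each uniquely determined, and $\lambda$ fixes them one after another.

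I expect the main obstacle to be the absence of geometric intersection number one on a genus zero surface: none of the ``unique curve meeting $x$ once'' characterizations used in Section 2 are available, so every curve of $\mathcal{C}_2$ must be isolated purely through disjointness relations, supplemented where necessary by the intersection-number-two information of Lemma \ref{2} and by preservation of topological type. Checking that the chosen defining list for each $w_i$ and $r_i$ genuinely has a unique solution — and, at the ambiguous step, that the two solutions are not merged by $\lambda$ — is where the concrete curve configurations in Figures \ref{fig-s10} and \ref{fig-s11} do the real work.
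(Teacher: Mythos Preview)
Your outline is correct and matches the paper's approach: start from Lemma~\ref{curves-main}, resolve the two-fold $\{r_1,w_1\}$ ambiguity with an order-two symmetry $\phi$, and then pin down the remaining curves of $\mathcal{C}_2$ by uniqueness characterizations using disjointness together with the intersection-two information of Lemma~\ref{2}. One execution difference worth noting: the paper does not prove non-collapsing of $r_1,w_1$ directly as you propose, but instead, after normalizing so that $h([w_1])=\lambda([w_1])$, recovers $r_1$ (and the other $r_i,w_i$) through a chain of auxiliary curves $x_1,x_2,x_3,x_4,y,z$ in Figure~\ref{fig-s12} that lie \emph{outside} $\mathcal{C}_1\cup\mathcal{C}_2$; so your plan to take each defining set $S$ inside $\mathcal{C}_1\cup(\text{already-fixed part of }\mathcal{C}_2)$ must be relaxed to allow such intermediaries.
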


\begin{proof} Let $h: R \rightarrow R$ be a homeomorphism which satisfies the statement of Lemma \ref{curves-main}. We will give the proof when $n \geq 6$. The proof for $n=5$ is similar. Consider the curves in $\mathcal{C}_2$ given in Figure \ref{fig-s11}. There exists a
homeomorphism $\phi : R \rightarrow R$ of order two such that the map $\phi_{*}$ induced by $\phi$ on $\mathcal{C}(R)$ sends the isotopy class of each curve in $\mathcal{C}_1$ to itself and switches $[r_1]$ and $[w_1]$.
Since there is a homeomorphism sending the pair $(a_1, b_1)$ to 
$(a_1, r_1)$, by using Lemma \ref{2} we see that $i(\lambda[a_1], \lambda[r_1])= 2$. Similarly, we have $i(\lambda[a_1], \lambda[w_1])= 2$, 
$i(\lambda[b_1], \lambda[r_1])= 2$, $i(\lambda[b_1], \lambda[w_1])= 2$.
The curves $r_1$ and $w_1$ are the only nontrivial curves up to isotopy disjoint from $a_2$, 
and intersects each of $a_1, b_1$ nontrivially twice in the four holed sphere cut by $a_2$. Since we know that $h([x]) = \lambda([x])$ for all these curves, $\lambda$ preserves these properties, by replacing $\lambda$ with $\lambda \circ \phi_{*}$ if necessary, we can assume that we have $h([w_1]) = \lambda([w_1])$. 
  
  \begin{figure}
  	\begin{center} 
  		\epsfxsize=2.2in \epsfbox{small-fig-67.eps} \hspace{0.2cm} 	\epsfxsize=2.2in \epsfbox{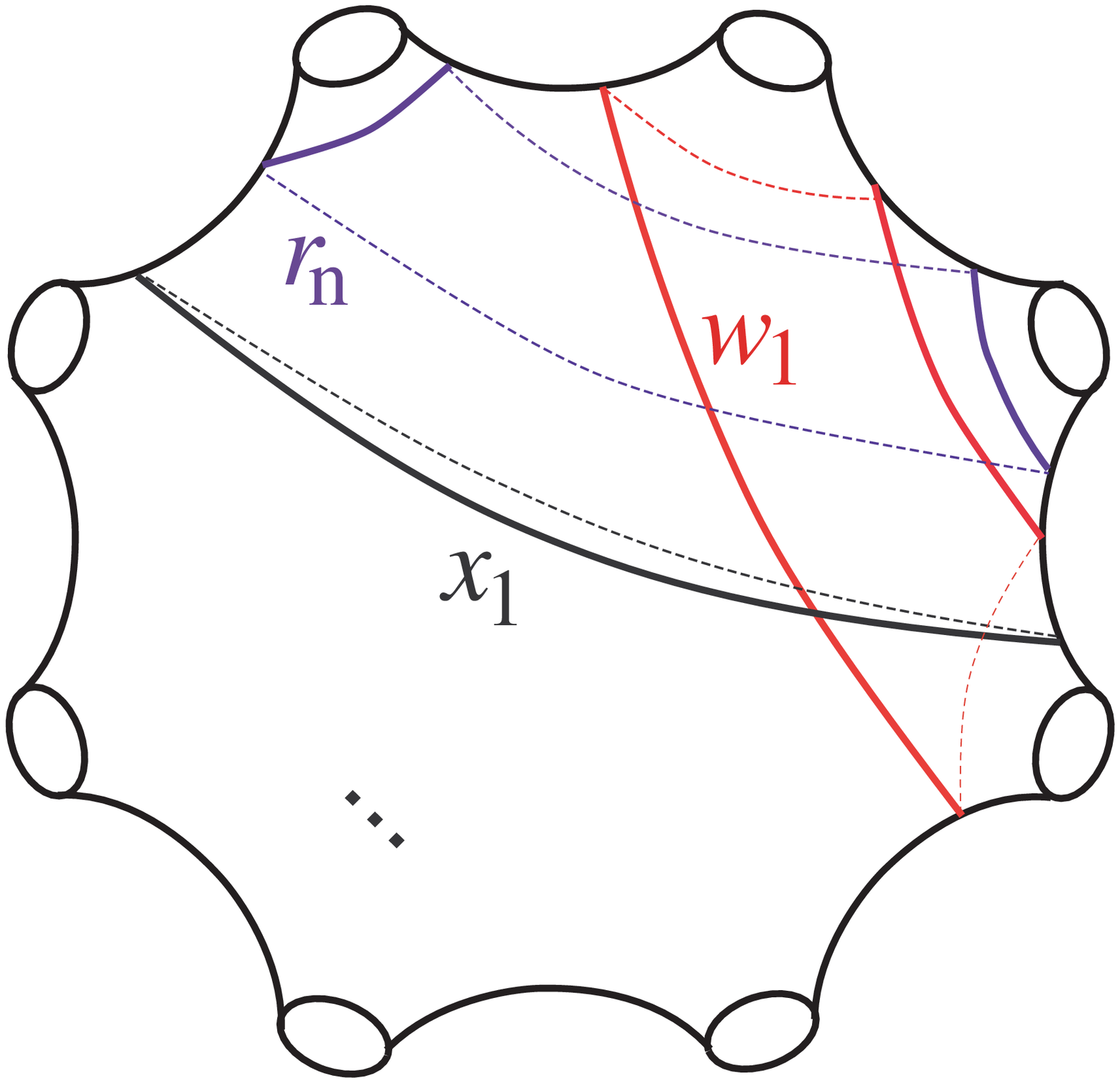}
  		
  		(i)   \hspace{5.2cm} (ii) 
  		
  		\epsfxsize=2.2in \epsfbox{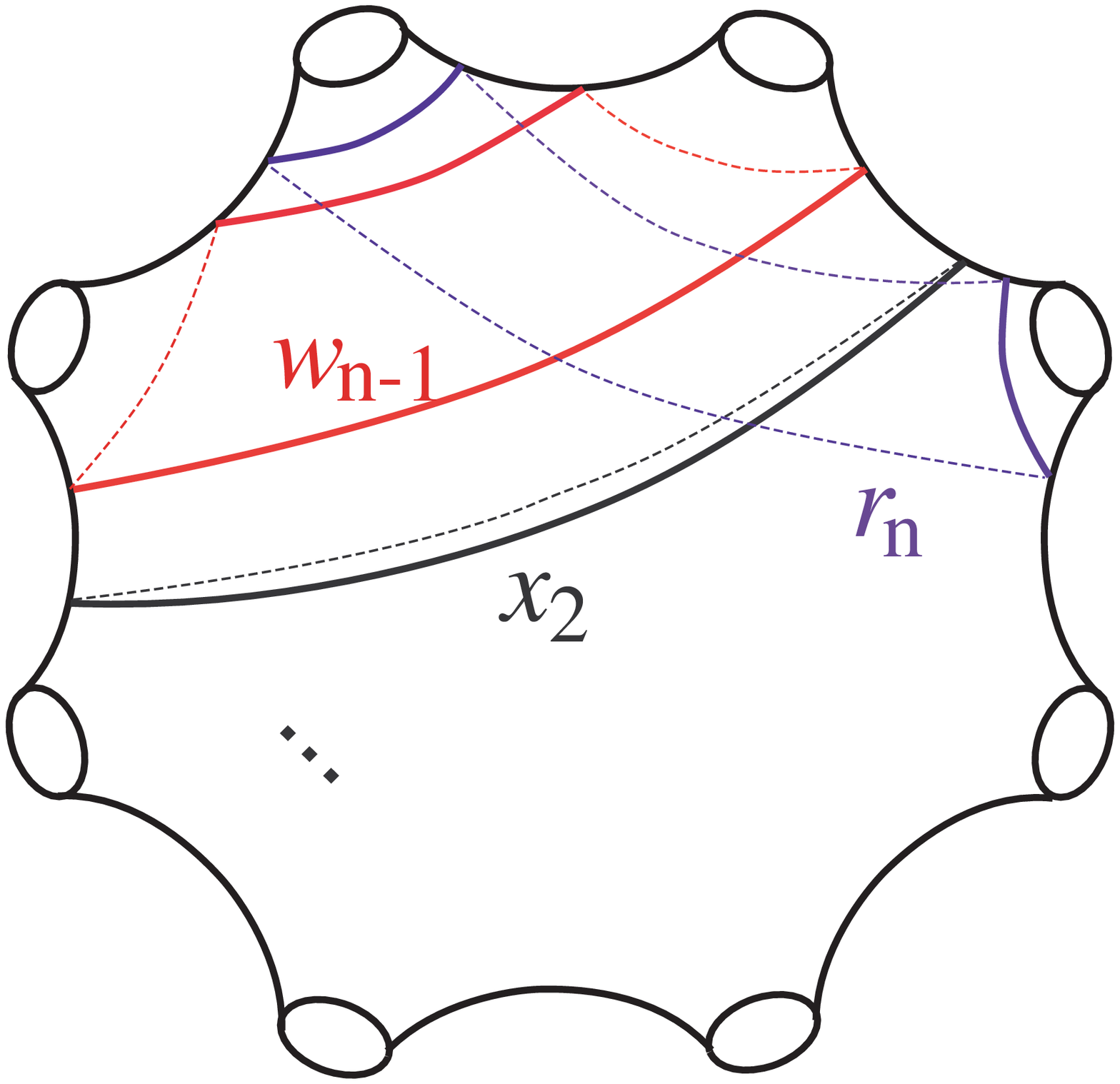} \hspace{0.2cm} 	\epsfxsize=2.2in \epsfbox{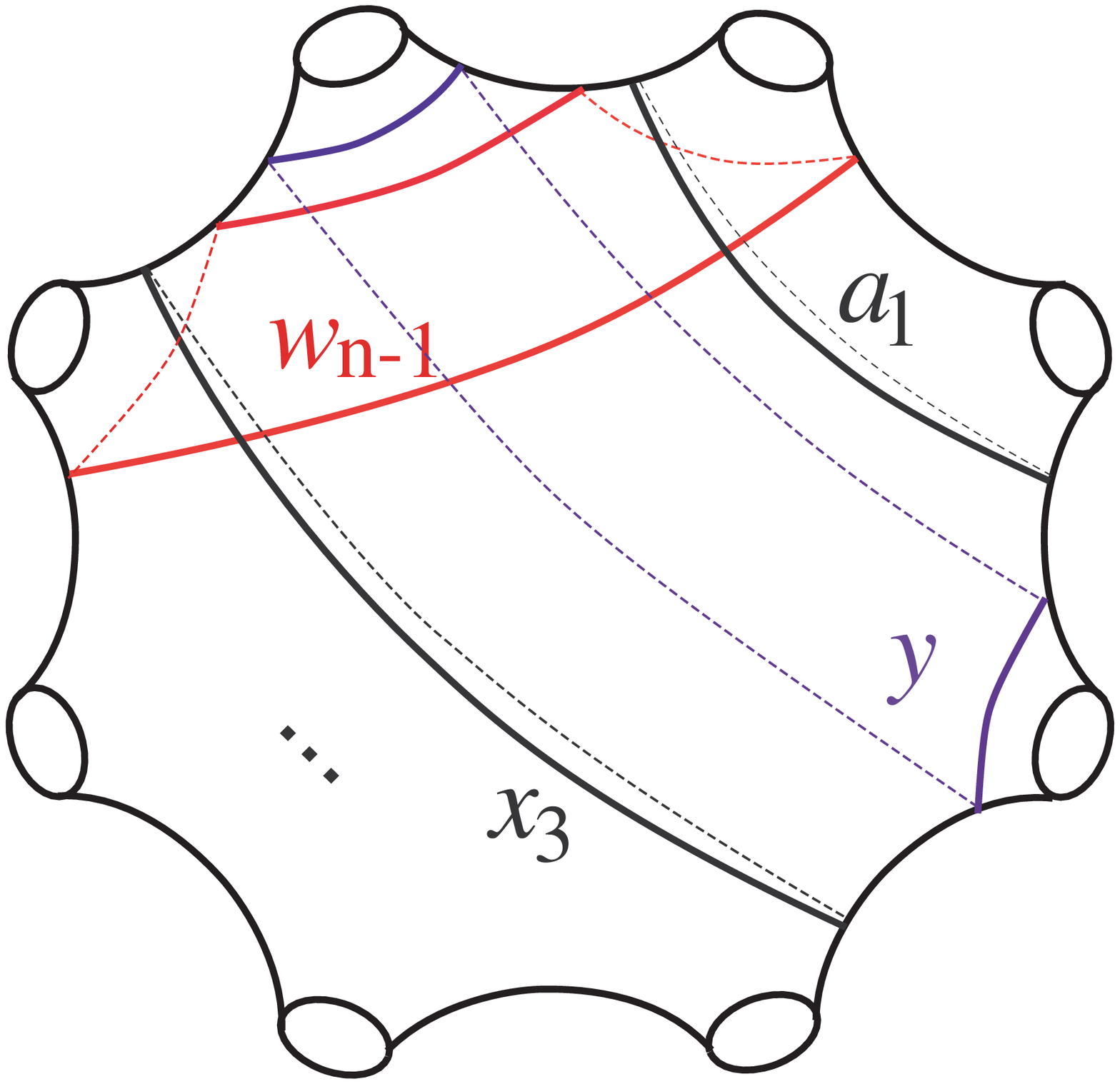}
  		
  		(iii)   \hspace{5.2cm} (iv) 
  		
  		\epsfxsize=2.2in \epsfbox{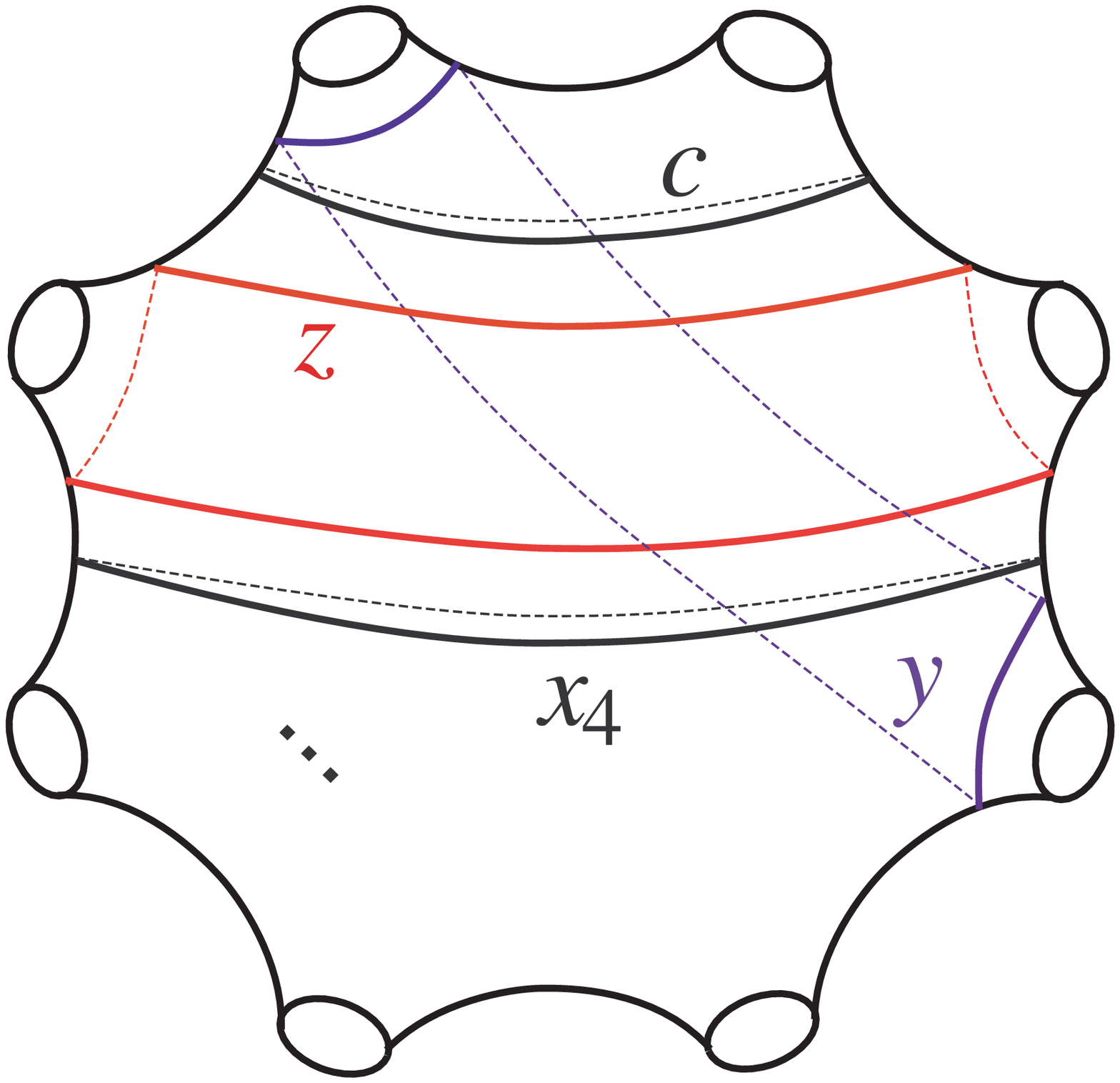} \hspace{0.2cm} 	\epsfxsize=2.2in \epsfbox{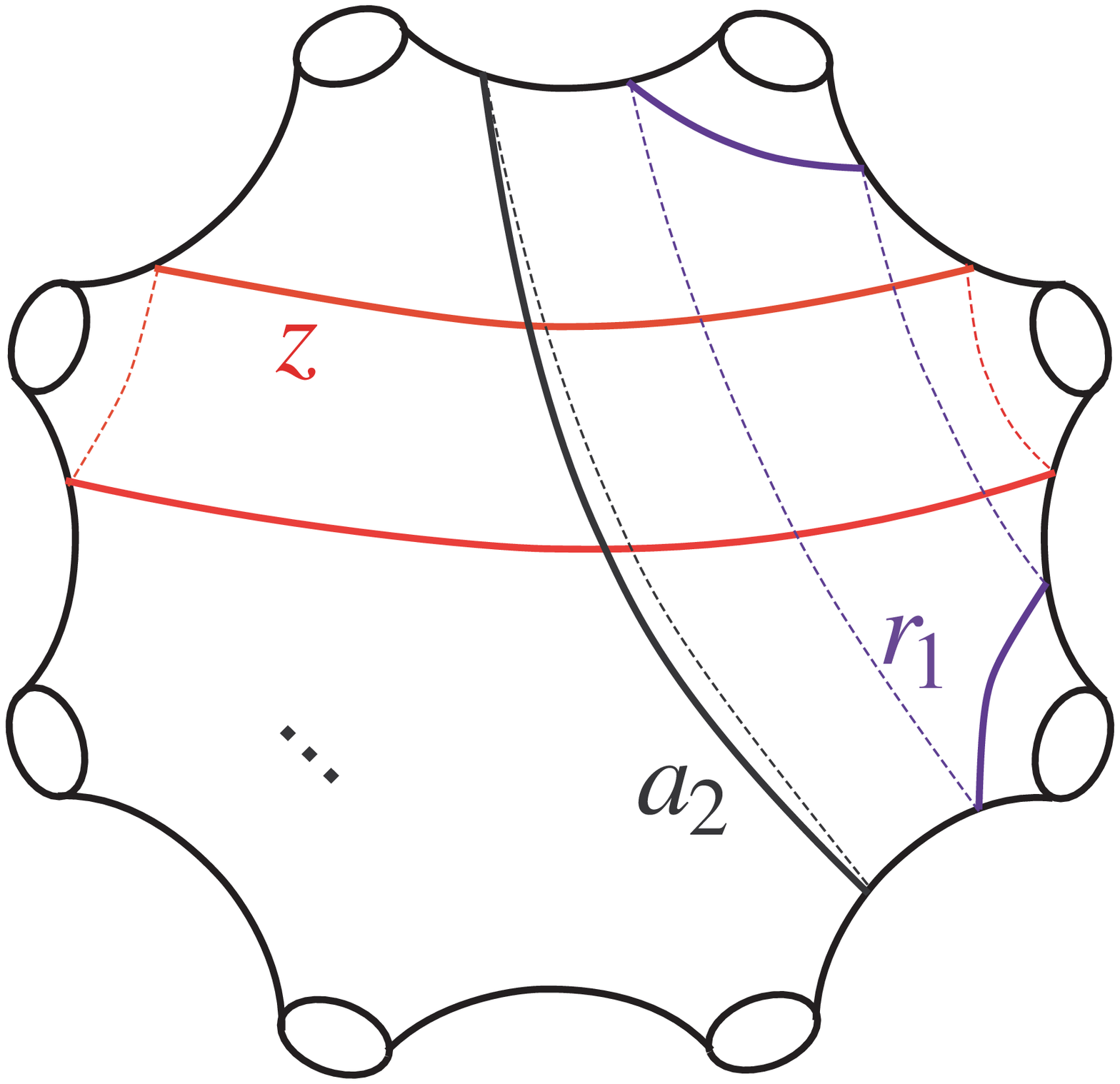}
  		
  		(v)   \hspace{5.2cm} (vi) 
  		\caption {Curves}
  		\label{fig-s12}
  	\end{center}
  \end{figure}

Consider the curves given in Figure \ref{fig-s12}.
The curve $x_1$ is the unique nontrivial curve up to isotopy that is disjoint from all the curves in $\{c, a_1, b_2, b_3, \cdots, b_{n-3}\}$. Since we know that $h([x]) = \lambda([x])$ for all these curves and $\lambda$ is edge preserving, we have $h([x_1]) = \lambda([x_1])$. The curve $r_n$ is the unique nontrivial curve up to isotopy that is nonisotopic to and disjoint from each curve in $\{x_1, w_1, b_2, b_3, \cdots, b_{n-3}\}$. Since we know that $h([x]) = \lambda([x])$ for all these curves and $\lambda$ preserves these properties, we have $h([r_n]) = \lambda([r_n])$.
The curve $x_2$ is the unique nontrivial curve up to isotopy that is disjoint from all the curves in $\{c, a_{n-3}, b_1, b_2, \cdots, b_{n-4}\}$. Since we know that $h([x]) = \lambda([x])$ for all these curves and $\lambda$ is edge preserving, we have $h([x_2]) = \lambda([x_2])$. The curve $w_{n-1}$ is the unique nontrivial curve up to isotopy that is nonisotopic to and disjoint from each curve in $\{x_2, r_n, b_1, b_2, \cdots, b_{n-4}\}$. Since we know that $h([x]) = \lambda([x])$ for all these curves and $\lambda$ preserves these properties, we have $h([w_{n-1}]) = \lambda([w_{n-1}])$. 

The curve $x_3$ is the unique nontrivial curve up to isotopy that is disjoint from all the curves in $\{c, a_1, b_1, b_3, b_4, \cdots, b_{n-3}\}$. Since we know that $h([x]) = \lambda([x])$ for all these curves and $\lambda$ is edge preserving, we have $h([x_3]) = \lambda([x_3])$. The curve $y$ is the unique nontrivial curve up to isotopy that is nonisotopic to and disjoint from all the curves in $\{a_1, x_3, w_{n-1}, b_3, b_4, \cdots, b_{n-3}\}$. Since we know that $h([x]) = \lambda([x])$ for all these curves and $\lambda$ preserves these properties, we have $h([y]) = \lambda([y])$.  
The curve $x_4$ is the unique nontrivial curve up to isotopy that is disjoint from all the curves in $\{c, a_1, b_2, b_3, \cdots, b_{n-4}, a_{n-3}\}$. Since we know that $h([x]) = \lambda([x])$ for all these curves and $\lambda$ is edge preserving, we have $h([x_4]) = \lambda([x_4])$. The curve $z$ is the unique nontrivial curve up to isotopy that is nonisotopic to and disjoint from all the curves in $\{c, y, x_4, b_2, b_3, \cdots, b_{n-4}\}$. Since we know that $h([x]) = \lambda([x])$ for all these curves and $\lambda$ preserves these properties, we have $h([z]) = \lambda([z])$. The curve $r_1$ is the unique nontrivial curve up to isotopy that is nonisotopic to and disjoint from each curve in $\{a_2, z, b_3, b_4, \cdots, b_{n-3}, a_{n-3}\}$. Since we know that $h([x]) = \lambda([x])$ for all these curves and $\lambda$ preserves these properties, we have $h([r_1]) = \lambda([r_1])$.
Hence we have $h([w_1]) = \lambda([w_1])$ and $h([r_1]) = \lambda([r_1])$. Similarly, we get $h([w_i]) = \lambda([w_i])$ for all $i=2, 3, \cdots, n$ and $h([r_i]) = \lambda([r_i])$ for all $i=2, 3, \cdots, n$.\end{proof}\\ 

We will use the notation $h_x$ for the half twist along $x$. Consider the curves in Figure \ref{fig-s10} (i). The group $Mod_R$ can be generated by 
$\{h_x: x \in \{a_1, b_1, b_2, \cdots, b_{n-3}, a_{n-3}, c\} \}$, see Corollary 4.15 in \cite{FM}. 
Let $G = \{h_x: x \in \{a_1, b_1, b_2, \cdots, b_{n-3}, a_{n-3}, c\} \}$. 
Let $h: R \rightarrow R$ be a homeomorphism which satisfies the statement of Lemma \ref{curves-ii-main}. We know $h([x]) = \lambda([x])$ $\forall \ x \in \mathcal{C}_1 \cup \mathcal{C}_2$. 
We will follow the techniques given by Irmak-Paris \cite{IrP2} to obtain the homeomorphism we want.

 \begin{figure}[t]
	\begin{center} 
		\epsfxsize=2.2in \epsfbox{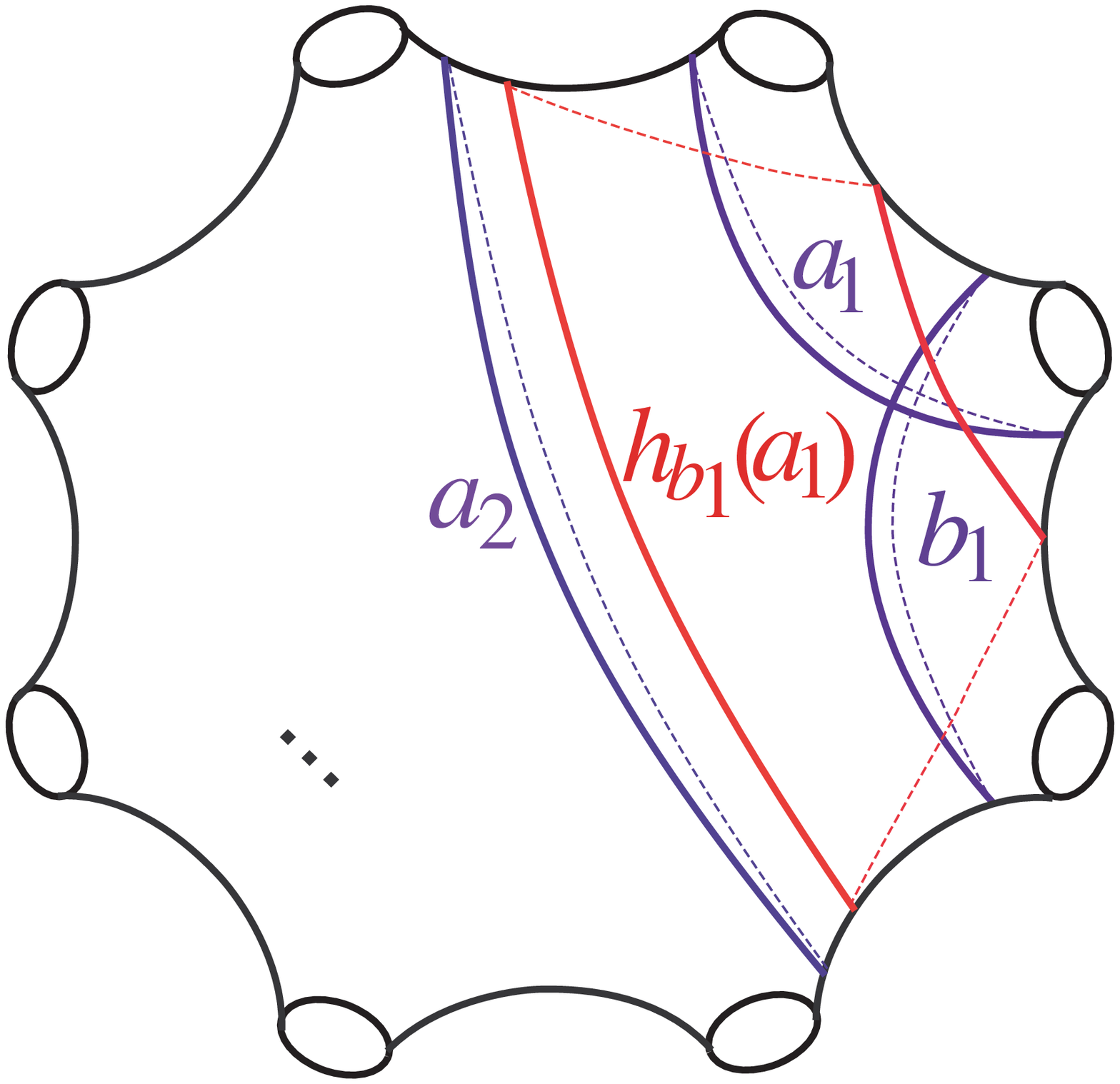} \hspace{0.2cm} 	\epsfxsize=2.2in \epsfbox{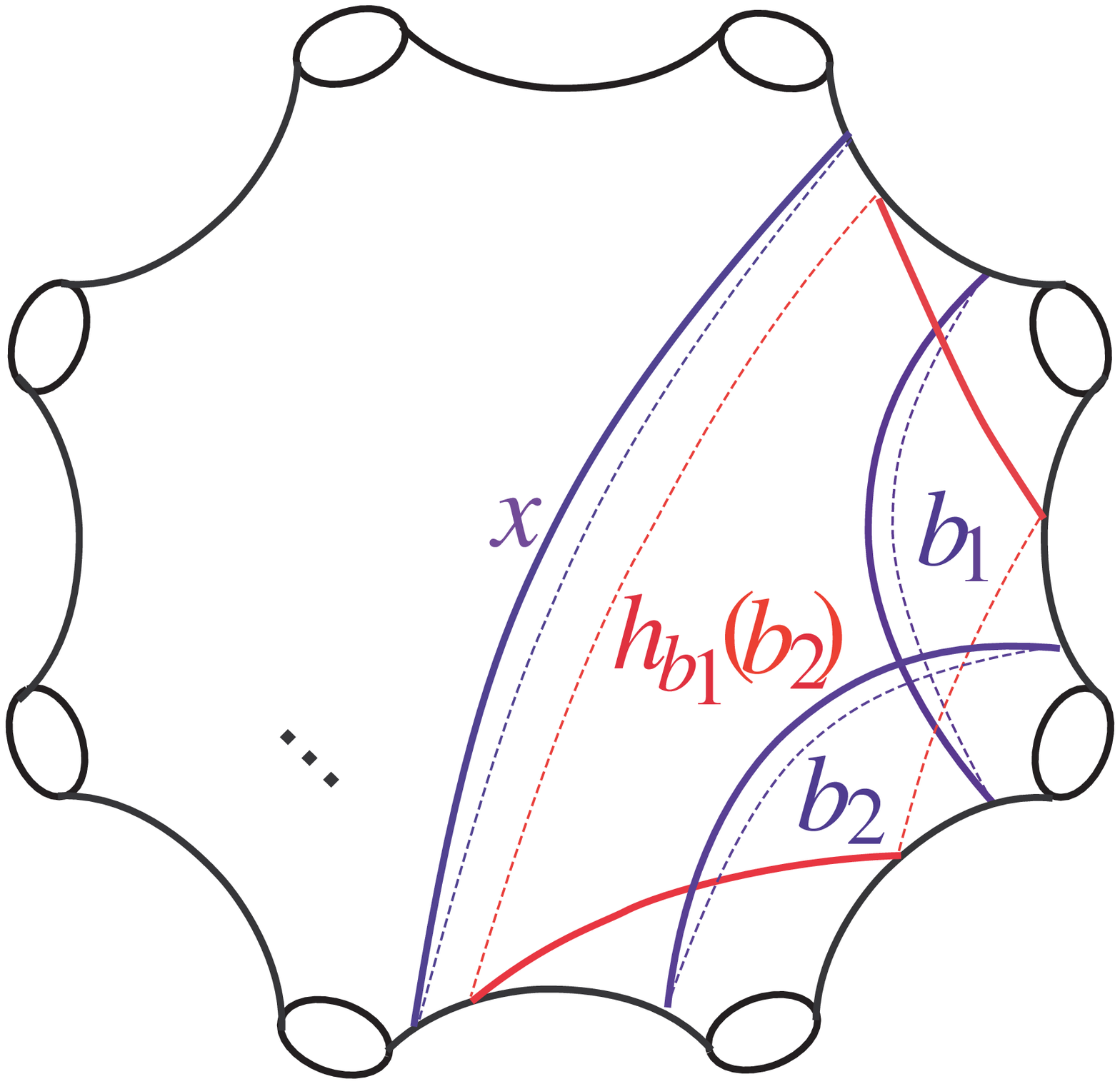}
		
		(i)   \hspace{5.2cm} (ii) 
		
		\epsfxsize=2.2in \epsfbox{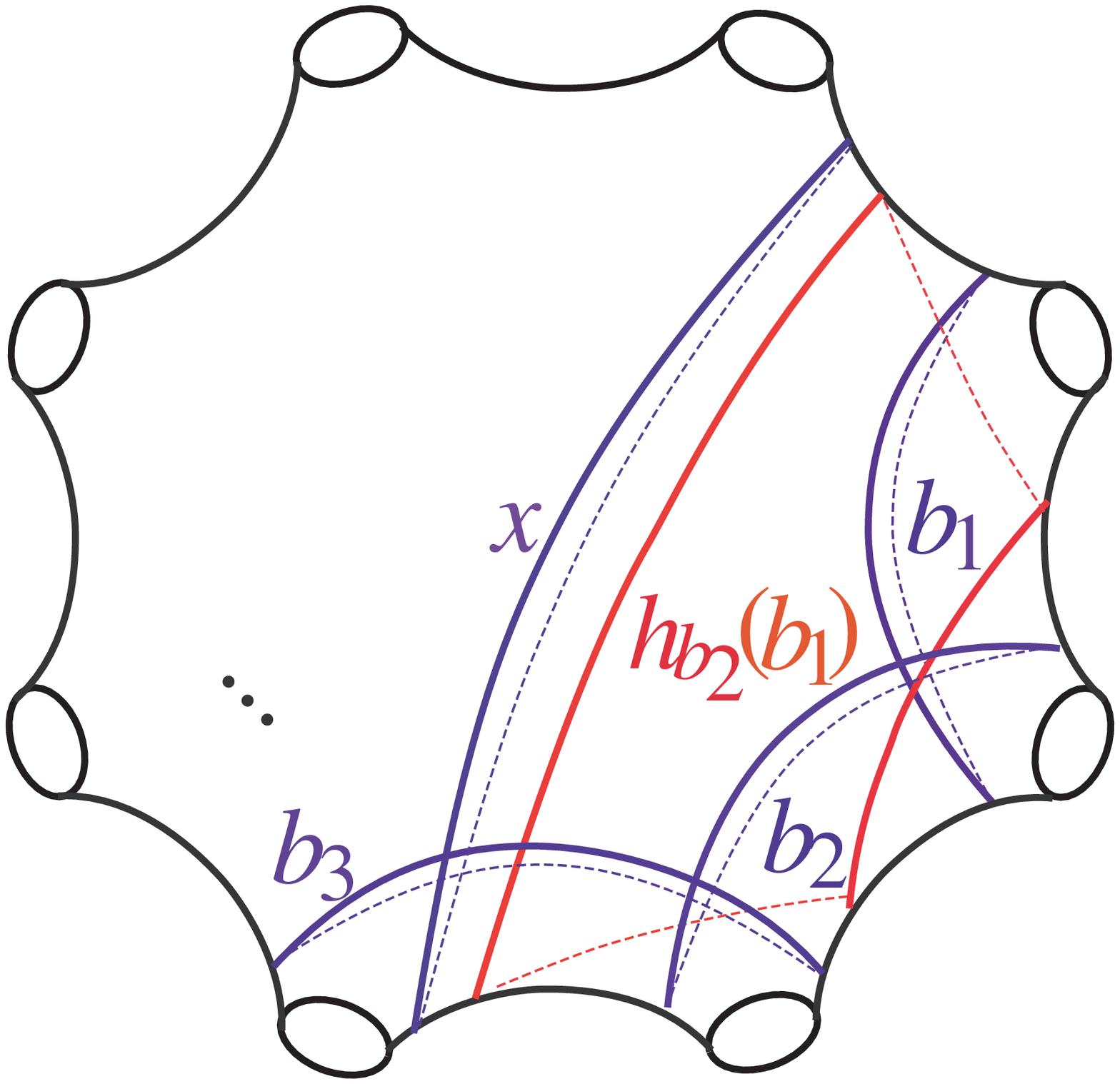} \hspace{0.2cm} 	\epsfxsize=2.2in \epsfbox{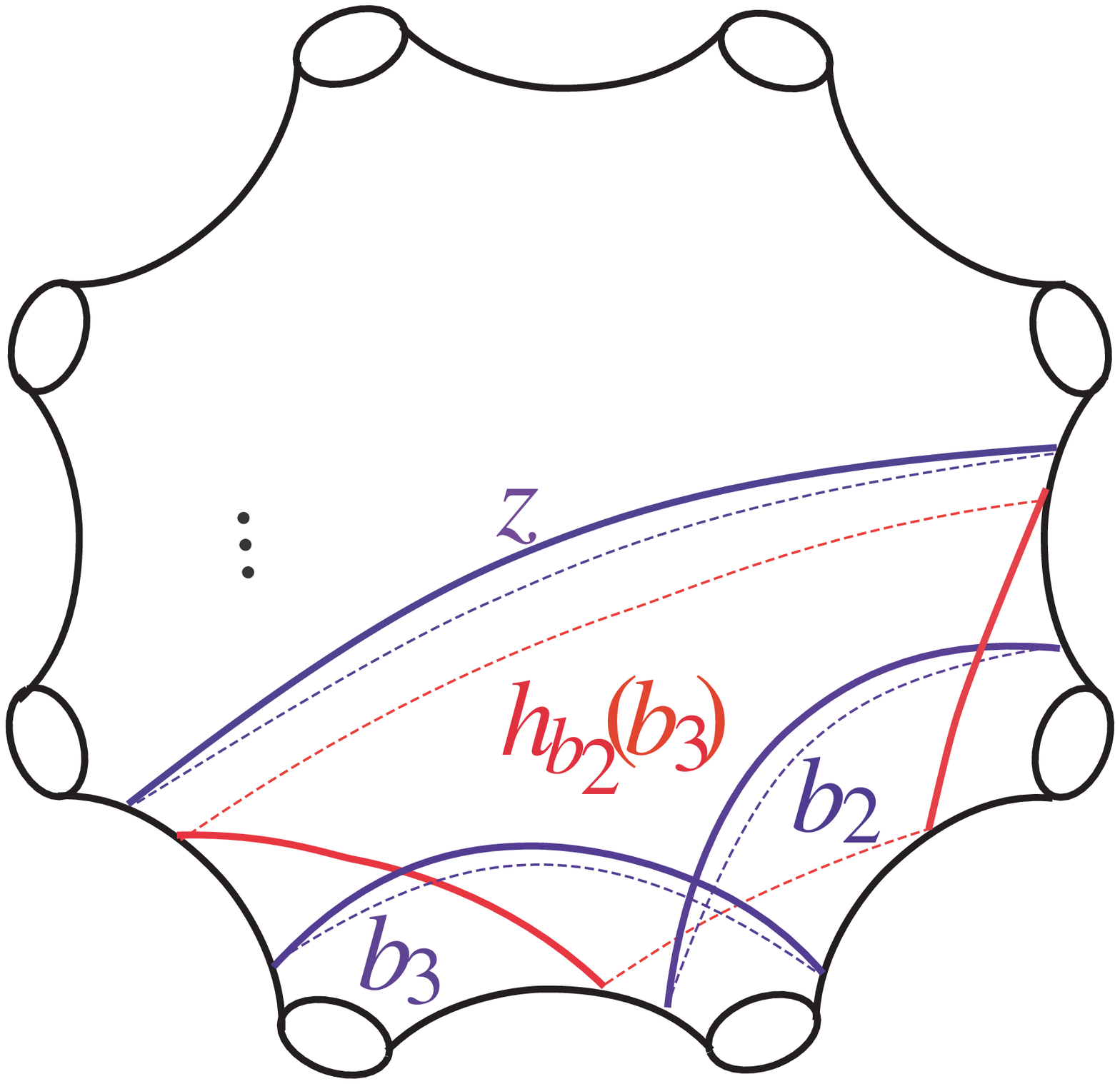}
		
		(iii)   \hspace{5.2cm} (iv) 
		\caption {Half-twists}
		\label{fig-s13}
	\end{center}
\end{figure}

\begin{lemma} \label{prop-imp2} $\forall \ f \in G$, 
$\exists$ a set $L_f \subset \mathcal{C}(R)$
such that $\lambda([x])= h([x])$ $\ \forall \ x \in L_f \cup f(L_f)$. $L_f$ can be chosen to have trivial stabilizer.\end{lemma}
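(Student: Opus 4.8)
The plan is to follow the structure of the proof of Lemma~\ref{prop-imp} in the genus one setting, now starting from the equality $\lambda([x]) = h([x])$ for all $x \in \mathcal{C}_1 \cup \mathcal{C}_2$ provided by Lemma~\ref{curves-ii-main}. Fix a generator $f = h_x \in G$, so that $x$ is one of $a_1, b_1, b_2, \cdots, b_{n-3}, a_{n-3}, c$. Since $h_x$ is supported in a regular neighborhood of $x$, it fixes the isotopy class of every curve disjoint from $x$, so it moves only the two or three curves of $\mathcal{C}_1$ that meet $x$. I would choose $L_f$ to be a subset of $\mathcal{C}_1 \cup \mathcal{C}_2$ containing the pants decomposition $P = \{ a_1, a_2, \cdots, a_{n-3} \}$, all the $b_i$ disjoint from $x$, and one or two further curves from $\{ c, w_1, r_1 \}$, arranged so that $L_f$ has trivial stabilizer while all but finitely many of its elements are fixed by $f$. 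As $\lambda$ and $h$ already agree on $L_f$, the task reduces to proving $\lambda([f(y)]) = h([f(y)])$ for the finitely many $y \in L_f$ actually moved by $f$.

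Each such equality would be obtained by the uniqueness argument used repeatedly in Lemmas~\ref{curves-main}--\ref{prop-imp}: one describes $f(y)$ as the unique nontrivial isotopy class that is nonisotopic to, disjoint from, and has prescribed geometric intersection number (zero or two) with, a finite list of curves on which $\lambda$ and $h$ have already been shown to coincide, reading this list off from the pictures in Figure~\ref{fig-s13}. Since $\lambda$ is edge preserving it preserves geometric intersection zero, and by Lemmas~\ref{1} and~\ref{2} together with Korkmaz's Theorem~3.2 in \cite{K1} it preserves the relevant intersection-two relations among curves of the types occurring here; hence $\lambda([f(y)])$ satisfies exactly the same defining list as $h([f(y)])$, and the two are equal. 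When $f(y)$ is not pinned down directly by curves already handled, I would first introduce one or two auxiliary curves (as $s_i, u_i$ were introduced in the proof of Lemma~\ref{prop-imp}, or $x_1, x_2, x_3, x_4, y, z$ in the proof of Lemma~\ref{curves-ii-main}), establish $\lambda = h$ on them by the same device, and then use them in the characterization of $f(y)$. Finally, as in Lemma~\ref{prop-imp}, each $L_f$ has trivial stabilizer because it contains a pants decomposition of $R$ together with enough additional curves to exclude any nontrivial element of $Mod^*_R$ fixing all of them.

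It is convenient to treat the generators in order. The cases $f = h_{a_1}$ and $f = h_{a_{n-3}}$ move only the end curves of the chain in Figure~\ref{fig-s10} and are handled symmetrically. The cases $f = h_{b_i}$ move $a_i$ and, for the extreme indices, also $c$; these are related to one another by the homeomorphisms of $R$ that shift the chain, so essentially a single computation covers them. The case $f = h_c$ is treated last. As elsewhere in this section, the argument would be given for $n \geq 6$, with the remark that $n = 5$ is similar.

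The main obstacle is not a new idea but the combinatorial-topological bookkeeping: for each generator and each moved curve one must exhibit an explicit small set of reference curves and verify, on the $n$-holed sphere, that the stated disjointness and intersection-two conditions determine a unique isotopy class, and one must set up these sets uniformly in $n$. The hardest single case is expected to be $f = h_c$, since $c$ lies ``opposite'' the pants decomposition $P$, and, as already witnessed in Lemmas~\ref{2} and~\ref{curves-ii-main}, controlling curves near $c$ requires the longest chains of auxiliary curves.
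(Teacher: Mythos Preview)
Your plan is correct in spirit but considerably more laborious than the paper's argument, and it misses the point of why $\mathcal{C}_2$ was introduced. In the paper, the set $\mathcal{C}_2 = \{w_1,\ldots,w_n,r_1,\ldots,r_n\}$ is constructed precisely so that for each generator $f=h_x$ and each curve $y$ in the chosen $L_f$ that is actually moved by $f$, the image $f(y)$ is \emph{already} one of the $w_i$ or $r_i$. For instance, $h_{b_1}(a_1)=w_1$, $h_{b_1}(b_2)=r_2$, $h_{b_2}(b_1)=w_2$, $h_{b_2}(b_3)=r_3$, and so on. Thus once Lemma~\ref{curves-ii-main} is in hand, the verification $\lambda([f(y)])=h([f(y)])$ is immediate---no new uniqueness arguments, no auxiliary curves, no appeal to intersection-two computations are needed. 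Your proposed machinery of characterizing each $f(y)$ via disjointness and intersection-two data would amount to redoing the work already packaged into Lemma~\ref{curves-ii-main}.

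Two further differences are worth noting. First, the paper's $L_f$ is not built from the pants decomposition $P=\{a_1,\ldots,a_{n-3}\}$ as you suggest, but from the chain $\{a_1,b_1,b_2,\ldots,b_{n-3},a_{n-3},c\}$ together with a single $w_f\in\mathcal{C}_2$ fixed by $f$; this chain has trivial stabilizer and, crucially, only two of its members are moved by any given half-twist generator. Second, your expectation that $f=h_c$ is the hardest case does not materialize: in the paper's setup all generators are handled uniformly, since the cycle $a_1,b_1,\ldots,b_{n-3},a_{n-3},c$ has no distinguished vertex and the $w_i,r_i$ were chosen symmetrically around it.
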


\begin{proof} We have $h([x]) = \lambda([x])$ $\forall \ x \in  \mathcal{C}_1 \cup \mathcal{C}_2$ by Lemma \ref{curves-ii-main}. Let $f \in G$. For $f=h_{b_1}$, let $L_f = \{a_1, b_1, b_2, \cdots, b_{n-3}, a_{n-3}, c, w_{n-1}\}$. The set $L_f$ has trivial stabilizer.
We know $\lambda([x])= h([x])$
$\forall \ x \in L_f$. We will check the equation for $h_{b_1}(a_1)$ and $h_{b_1}(b_2)$ since the other curves in $L_f$ are fixed by $h_{b_1}$. 
Consider the curves given in Figure \ref{fig-s13} (i), (ii). We see that $w_1=h_{b_1}(a_1)$ and $r_2=h_{b_1}(b_2)$. So, by Lemma \ref{curves-ii-main}, we have $\lambda([h_{b_1}(a_1)])= h([h_{b_1}(a_1)])$ and 
$\lambda([h_{b_1}(b_2)])= h([h_{b_1}(b_2)])$. 
So, when $f=h_{b_1}$, we have $\lambda([x])= h([x])$ $\ \forall \ x \in L_f \cup f(L_f)$.  

For $f=h_{b_2}$, let $L_f = \{a_1, b_1, b_2, \cdots, b_{n-3}, a_{n-3}, c, w_{n}\}$. The set $L_f$ has trivial stabilizer.
We know $\lambda([x])= h([x])$
$\forall \ x \in L_f$. We will check the equation for $h_{b_2}(b_1)$ and $h_{b_2}(b_3)$ since the other curves in $L_f$ are fixed by $h_{b_2}$. 
Consider the curves given in Figure \ref{fig-s13} (iii), (iv). We see that $w_2=h_{b_2}(b_1)$ and $r_3=h_{b_2}(b_3)$. So, by Lemma \ref{curves-ii-main}, we have $\lambda([h_{b_2}(b_1)])= h([h_{b_2}(b_1)])$ and $\lambda([h_{b_2}(b_3)])= h([h_{b_2}(b_3)])$. 
So, when $f=h_{b_2}$, we have $\lambda([x])= h([x])$ $\ \forall \ x \in L_f \cup f(L_f)$.  

For $f \in G \setminus \{h_{b_1}, h_{b_2}\}$,  similarly we choose let $L_f = \{a_1, b_1, b_2, \cdots, b_{n-3}, a_{n-3}, c, w_{f}\}$ where $w_f \in \{w_1, w_2, \cdots, w_n\}$ and $w_f$ is fixed by $f$. Similar to the previous cases we have $\lambda([x])= h([x])$ $\ \forall \ x \in L_f \cup f(L_f)$.\end{proof}  
 
\begin{theorem} \label{B} There exists a homeomorphism $h : R \rightarrow R$ such that $H(\alpha) = \lambda(\alpha)$ for every vertex $\alpha$ in $\mathcal{C}(R)$ where $H=[h]$ and this homeomorphism is unique up to isotopy.\end{theorem}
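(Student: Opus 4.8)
The plan is to mimic exactly the bootstrapping argument already used to prove Theorem~\ref{A} in the genus-one case, now feeding in the genus-zero ingredients established in this section. First I would fix a homeomorphism $h : R \to R$ satisfying the conclusion of Lemma~\ref{curves-ii-main}, so that $h([x]) = \lambda([x])$ for every vertex $x \in \mathcal{C}_1 \cup \mathcal{C}_2$. Let $G$ be the finite generating set of $Mod_R$ consisting of the half twists $h_x$ for $x \in \{a_1, b_1, b_2, \dots, b_{n-3}, a_{n-3}, c\}$ (Corollary 4.15 in \cite{FM}). For each $f \in G$, Lemma~\ref{prop-imp2} supplies a subset $L_f \subset \mathcal{C}(R)$ with trivial stabilizer such that $\lambda([x]) = h([x])$ for all $x \in L_f \cup f(L_f)$.

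Next I would set up the exhaustion of $\mathcal{C}(R)$ by the nested sets $\mathcal{X}_k$, exactly as in the proof of Theorem~\ref{A}: put $\mathcal{X} = (\mathcal{C}_1 \cup \mathcal{C}_2) \cup \big(\bigcup_{f \in G} (L_f \cup f(L_f))\big)$, let $\mathcal{X}_1 = \mathcal{X}$, and inductively $\mathcal{X}_k = \mathcal{X}_{k-1} \cup \big(\bigcup_{f \in G}(f(\mathcal{X}_{k-1}) \cup f^{-1}(\mathcal{X}_{k-1}))\big)$ for $k \geq 2$. Since $G$ generates $Mod_R$ and $Mod_R$ acts transitively enough on vertices — every vertex of $\mathcal{C}(R)$ is $r(y)$ for some $r \in Mod_R$ and some $y \in \mathcal{X}$, using that $\mathcal{C}_1$ already contains representatives of each topological type of curve in the planar surface — one gets $\mathcal{C}(R) = \bigcup_{k=1}^{\infty} \mathcal{X}_k$.

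Then I would prove by induction on $k$ that $h([x]) = \lambda([x])$ for all $x \in \mathcal{X}_k$. The base case $k=1$ is Lemma~\ref{curves-ii-main} together with Lemma~\ref{prop-imp2}. For the inductive step, assume the equality holds on $\mathcal{X}_{k-1}$. Given $f \in G$, the restriction of $\lambda$ to the set $f(\mathcal{X}_{k-1})$ is realized by some homeomorphism $h_f$ (apply the inductive hypothesis after precomposing with $f^{-1}$, using that $\lambda$ agrees with a homeomorphism on $\mathcal{X}_{k-1}$ and that $f \in Mod_R$). Since $f(L_f) \subset \mathcal{X}_{k-1} \cap f(\mathcal{X}_{k-1})$ and $f(L_f)$ has trivial stabilizer, $h_f$ and $h$ agree on $f(L_f)$, hence $h_f = h$; symmetrically for $f^{-1}$ using $L_f \subset \mathcal{X}_{k-1} \cap f^{-1}(\mathcal{X}_{k-1})$. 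Therefore $h([x]) = \lambda([x])$ on all of $\mathcal{X}_k$, and by induction on $\mathcal{C}(R)$.

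The main obstacle, and the only place requiring genuine care, is the claim that for each $f \in G$ the map $\lambda$ restricted to $f(\mathcal{X}_{k-1})$ is actually induced by a homeomorphism; this is where one needs that a simplicial self-map of $\mathcal{C}(R)$ which agrees with a homeomorphism on a set that has trivial stabilizer and generates (under the group action) enough of the complex must itself be that homeomorphism on the relevant finite piece — this is precisely the role of the ``trivial stabilizer'' hypothesis built into Lemmas~\ref{prop-imp} and~\ref{prop-imp2}, and the argument is identical to the one in \cite{IrP1}, \cite{IrP2} cited for Theorem~\ref{A}. Uniqueness up to isotopy then follows because the curve graph $\mathcal{C}(R)$ contains a finite subset (e.g.\ $\mathcal{C}_1$) with trivial stabilizer, so two homeomorphisms inducing the same map on $\mathcal{C}(R)$ agree up to isotopy. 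This completes the proof.
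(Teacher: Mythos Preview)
Your proposal is correct and follows exactly the paper's approach: the paper's own proof of Theorem~\ref{B} is a one-line reference stating that the argument is the same as that of Theorem~\ref{A}, with Lemma~\ref{curves-ii-main} and Lemma~\ref{prop-imp2} replacing Lemma~\ref{curves} and Lemma~\ref{prop-imp}. You have faithfully unpacked that reference, including the exhaustion $\mathcal{X}_k$, the inductive use of the trivial-stabilizer sets $L_f$ and $f(L_f)$, and the uniqueness observation.
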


\begin{proof} The proof is similar to the proof of Theorem \ref{A} using Lemma \ref{curves-ii-main} and Lemma \ref{prop-imp2}.  \end{proof}\\
 
{\bf Acknowledgements}\\

We thank Peter Scott for some discussions and his comments about this paper.
  
\vspace{0.3cm}

{\bf Elmas Irmak} \vspace{0.2cm}

Bowling Green State University, Department of Mathematics and Statistics, Bowling

Green, 43403, OH, USA, e-mail: eirmak@bgsu.edu\vspace{0.2cm}

University of Michigan, Department of Mathematics, Ann Arbor, 48105, MI, USA, 

e-mail: eirmak@umich.edu\\

\end{document}